\newcommand{\RR}{\mathbb{R}}
\newcommand{\NN}{\mathbb{N}}
\newcommand{\EE}{\mathtt{E}}
\newcommand{\BI}{\mathfrak B}
\newcommand{\1}{\mathbf{1}}
\newcommand{\Lcal}{\mathcal{L}}
\newcommand{\Fcal}{\mathcal{F}}
\newcommand{\Pcal}{\mathcal{P}}
\newcommand{\Kcal}{\mathcal{K}}
\newcommand{\Mcal}{\mathcal{M}}
\newcommand{\Scal}{\mathcal{S}}
\newcommand{\Qcal}{\mathcal{Q}}
\newcommand{\HS}{\mathrm{HS}}
\newcommand{\PM}{\mathrm{PrPl}}
\newcommand{\PMEB}{\mathrm{PrPl\text{-}EB}}
\newcommand{\PMH}{\mathrm{PrPl\text{-}H}}
\newcommand{\PMpm}{\mathrm{PrPl\pm}}
\newcommand{\VAEB}{{\mathrm{PrPl\text{-}EB}(n)}}
\newcommand{\dKelly}{\mathrm{dKelly}}
\newcommand{\gKelly}{\mathrm{gKelly}}
\newcommand{\hgKelly}{\mathrm{hgKelly}}
\newcommand{\SOS}{\mathrm{GRAPA}}
\newcommand{\aSOS}{\mathrm{aGRAPA}}
\newcommand{\WoR}{\mathrm{WoR}}
\newcommand{\HWoR}{\mathrm{H\text{-}\WoR}}
\newcommand{\EBWoR}{\mathrm{EB\text{-}\WoR}}
\newcommand{\EL}{\mathrm{EL}}
\newcommand{\ONS}{\mathrm{O}}
\newcommand{\LBOW}{\mathrm{L}}
\newcommand{\CB}{\mathrm{CB}}
\newcommand{\SRP}{\mathrm{SRP}}
\newcommand{\CBWoR}{\mathrm{CB}\text{-}\WoR}
\newcommand{\G}{\mathrm{G}}
\newcommand{\Var}{\mathrm{Var}}
\DeclareMathOperator*{\argmin}{argmin}
\DeclareMathOperator*{\argmax}{argmax}
\newtheorem{lemma}{Lemma}
\newtheorem{theorem}{Theorem}
\newtheorem{proposition}{Proposition}
\newtheorem{corollary}{Corollary}
\newtheorem{remark}{Remark}
\renewcommand\tableofcontents{%
    \@starttoc{toc}%
}
\newcommand{\myqed}{\phantom\qedhere\qed}
\newcommand{\myproofname}[1]{Proof of #1}
\title{Estimating means of bounded random variables by betting}
\author{%
Ian Waudby-Smith$^1$ and Aaditya Ramdas$^{12}$\vspace{0.05in}\\
  Departments of Statistics$^1$ and Machine Learning$^2$\\
  Carnegie Mellon University\\
  \texttt{\{ianws, aramdas\}@cmu.edu} \\
}
\begin{document}

\maketitle

\begin{abstract}
  This paper derives confidence intervals (CI) and time-uniform confidence sequences (CS) for the classical problem of estimating an unknown mean from bounded observations. We present a general approach for deriving concentration bounds, that can be seen as a generalization and improvement of the celebrated Chernoff method. At its heart, it is based on a class of composite nonnegative martingales, with strong connections to testing by betting and the method of mixtures. We show how to extend these ideas to sampling without replacement, another heavily studied problem. In all cases, our bounds are adaptive to the unknown variance, and empirically vastly outperform existing approaches based on Hoeffding or empirical Bernstein inequalities and their recent supermartingale generalizations by~\citet{howard_uniform_2019}. In short, we establish a new state-of-the-art for four fundamental problems: CSs and CIs for bounded means, when sampling with and without replacement.
\end{abstract}

\tableofcontents

\section{Introduction}
\label{sec:introduction}

This work presents a new approach to two fundamental problems: (Q1) how do we produce a confidence interval for the mean of a distribution with (known) bounded support using $n$ independent observations? (Q2) given a fixed list of $N$ (nonrandom) numbers with known bounds, how do we produce a confidence interval for their mean by sampling $n \leq N$ of them without replacement in a random order? We work in a nonasymptotic and nonparametric setting, meaning that we do not employ asymptotics or parametric assumptions. Both (Q1) and (Q2) are well studied questions in probability and statistics, but we bring new conceptual tools to bear, resulting in state-of-the-art solutions to both. 

We also consider sequential versions of these problems where observations are made one-by-one; we derive time-uniform confidence sequences, or equivalently, confidence intervals that are valid at arbitrary stopping times. In fact, we first describe our techniques in the sequential regime, because the employed proof techniques naturally lend themselves to this setting. We then instantiate the derived bounds for the more familiar setting of a fixed sample size when a batch of data is observed all at once. Our supermartingale techniques can be thought of as generalizations of classical methods for deriving concentration inequalities, but we prefer to present them in the language of betting, since this is a more accurate reflection of the authors' intuition.

Arguably the most famous concentration inequality for bounded random variables was derived by~\citet{hoeffding_probability_1963}. What is now referred to as ``Hoeffding's inequality'' was in fact improved upon in the same paper where he derived a Bernoulli-type upper bound on the moment generating function of bounded random variables \citep[Equation (3.4)]{hoeffding_probability_1963}. While these bounds are already reasonably tight in a worst-case sense, the resulting confidence intervals do not adapt to non-Bernoulli distributions with lower variance. Inequalities by ~\citet{bennett_probability_1962}, \citet{bernstein_theory_1927} and \citet{bentkus2004hoeffding} improve upon Hoeffding's, but such improvements require knowledge of nontrivial upper bounds on the variance. This led to the development of so-called ``empirical Bernstein inequalities'' by~\citet{audibert2007tuning}and~\citet{maurer_empirical_2009}, which outperform Hoeffding's method for low-variance distributions at large sample sizes by estimating the variance from the data. Our new, and arguably quite simple, approaches to developing bounds significantly outperform these past works (e.g.~Figure~\ref{fig:comparisonAll}).\footnote{\href{https://github.com/WannabeSmith/betting-paper-simulations}{github.com/wannabesmith/betting-paper-simulations} has code to reproduce figures. 
The \texttt{betting} module of the Python package in \href{https://github.com/gostevehoward/confseq}{github.com/gostevehoward/confseq} has the main algorithms, but the package also contains implementations from other papers.} 
We also show that the same conceptual (betting) framework extends to without-replacement sampling, resulting in significantly tighter bounds than classical ones by \citet{serfling1974probability}, improvements by \citet{bardenet2015concentration} and previous state-of-the-art methods due to~\citet{waudby2020confidence}.

For providing intuition, our approach can be described in words as follows:
\textit{If we are allowed to repeatedly bet against the mean being $m$, and if we make a lot of money in the process, then we can safely exclude $m$ from the confidence set.}
The rest of this paper makes the above claim more precise by showing smart, adaptive strategies for (automated) betting, quantifying the phrase ``a lot of money'', and explaining why such an exclusion is mathematically justified. 
At the risk of briefly losing the unacquainted reader, here is a slightly more detailed high-level description:
\begin{quote}
    For each $m \in [0,1]$, we set up a ``fair'' multi-round game of statistician against nature whose payoff rules are such that if the true mean happened to equal $m$, then the statistician can neither gain nor lose wealth in expectation (their wealth in the $m$-th game is a nonnegative martingale), but if the mean is not $m$, then it is possible to bet smartly and make money. Each round involves the statistician making a bet on the next observation, nature revealing the observation and giving the appropriate (positive or negative) payoff to the statistician. The statistician then plays all these games (one for each $m$) in parallel, starting each with one unit of wealth, and possibly using a different, adaptive, betting strategy in each. The $1-\alpha$ confidence set at time $t$ consists of all $m\in[0,1]$ such that the statistician's money in the corresponding game has not crossed $1/\alpha$. The true mean $\mu$ will be in this set with high probability.
\end{quote}
Our choice of language above stems from a game-theoretic approach towards probability, as developed in the books by \citet{shafer_probability_2005,shafer2019game} and a recent paper by \citet{shafer2019language}, but from a purely mathematical viewpoint, our results are extensions of a unified supermartingale approach towards nonparametric concentration and estimation described in \citet{howard_exponential_2018,howard_uniform_2019}; related supermartingale approaches were studied by~\cite{kaufmann2018mixture}, \cite{jun2019parameter}. We elaborate on this viewpoint in Section~\ref{subsec:connections-to-betting}. The most directly related works to our own are by~\cite{hendriks2018test}, whose preprint has initial explorations of methods similar to ours for with-replacement sequential testing and estimation, and~\cite{stark2020sets}, who credits Kaplan for a computationally intractable variant of our approach for sequential testing in the without-replacement case. Apart from several novel results, the present paper extends these past works in \emph{depth, breadth and unity}: our work contains a deeper empirical and theoretical investigation from statistical and computational viewpoints, places our work in a broader context of related work in both settings, and unifies the with- and without-replacement methodology for both testing and estimation in both fixed-time and sequential settings.

We now have the appropriate context for a concrete formalization of our problem, which is slightly more general than introduced above. After that, we describe the game, why the rules of engagement result in valid statistical inference, and derive computationally and statistically efficient betting strategies. 

\begin{figure}[!htbp]
\centering \textbf{Time-uniform confidence sequences}
    \includegraphics[width=0.9\textwidth]{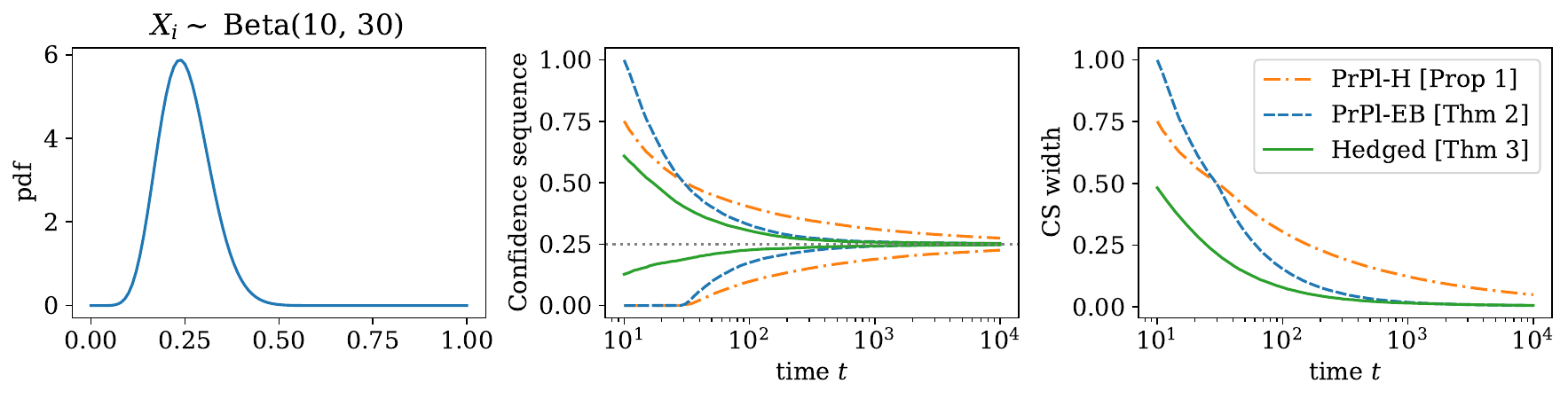}
 \hfill
 \centering \textbf{Fixed-time confidence intervals}
    \centering
    \includegraphics[width=0.9\textwidth]{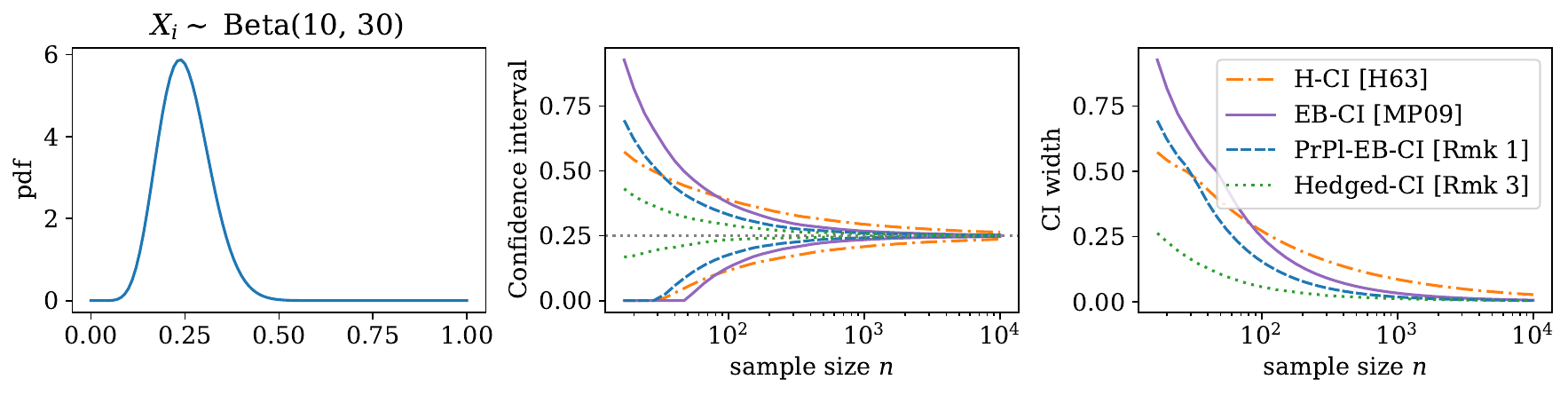}
 \caption{Time-uniform 95\% confidence sequences (upper row) and fixed-time 95\% confidence intervals (lower row) for the mean of independent and idenically distributed (iid) draws from a Beta(10, 30) distribution (unknown to the methods). The betting approaches (Hedged and Hedged-CI) adapt to both the small variance and asymmetry of the data, outperforming the other methods. For a detailed empirical comparison under a larger variety of settings, see Section~\ref{section:simulations}; for additional comparisons under non-iid data, see Section~\ref{section:noniid}.
 }
 \label{fig:comparisonAll}
\end{figure}


\paragraph{Outline.} We summarize the broad approach
 in  Section~\ref{sec:setup}.
As a warmup, we derive a new predictable plug-in method for deriving confidence sequences using exponential supermartingales (Section~\ref{section:warmup}), which already leads to computationally efficient and visually appealing empirical Bernstein confidence intervals and sequences. We then further improve on the aforementioned methods by developing a new martingale approach to deriving time-uniform and fixed-time confidence sets for means of bounded random variables, and connect the developed ideas to betting (Section~\ref{section:capitalProcess}). Section~\ref{section:howToBet} discusses some principles to derive powerful betting strategies to obtain tight confidence sets.
We then show how our techniques also extend to sampling without replacement (Section~\ref{sec:WoR}). 
Revealing simulations are performed along the way to demonstrate the efficacy of the new methods, with a more extensive comparison with past work in Section~\ref{section:simulations}. Section~\ref{section:history-short} summarizes how betting ideas have shaped mathematics, outside of our paper's focus on statistical inference.
We postpone proofs to Section~\ref{sec:app-proofs} and further theoretical insights to Section~\ref{sec:app-theory}.

\section{Concentration inequalities via nonnegative supermartingales}\label{sec:setup}
To set the stage, let $\Qcal^m$ be the set of all distributions on $[0,1]$, where each distribution has mean $m$.
Note that $\Qcal^m$ is a convex set of distributions and it has no common dominating measure, since it consists of both discrete and continuous distributions. 

Consider the setting where we observe a (potentially infinite) sequence of $[0, 1]$-valued random variables with conditional mean $\mu$ for some unknown $\mu \in [0,1]$. We write this as $(X_t)_{t=1}^\infty \sim P$ for some $P \in \Pcal^\mu$, where $\Pcal^\mu$ is the set of all distributions $P$ on $[0, 1]^\infty$ such that $\EE_P(X_t \mid X_1, \dots, X_{t-1}) = \mu$. This includes familiar settings such as independent observations, where $X_i \sim Q_i \in \mathcal Q^\mu$, or i.i.d.\ observations where all $Q_i$'s are identical, but captures more general settings where the conditional distribution of $X_t$ given the past is an element of $\Qcal^\mu$. When one only observes $n$ outcomes, it suffices to imagine throwing away the rest, so that in what follows, we avoid new notation for distributions $P$ over finite length sequences.

We are interested in deriving tight confidence sets for $\mu$, typically intervals, with no further assumptions. Specifically, for a given error tolerance $\alpha \in (0, 1)$, a $(1-\alpha)$ confidence interval (CI) is a random set $C_n \equiv C(X_1,\dots,X_n) \subseteq [0,1]$ such that 
\begin{equation}
\small
\forall n \geq 1, \inf_{P \in \Pcal^\mu} P(\mu \in C_n) \geq 1-\alpha.
\end{equation}
As mentioned earlier, the inequality by~\cite{hoeffding_probability_1963} implies that we can choose
\begin{equation}
\label{eq:hoeffding}
\small
C_n := \left(\overline X_n \pm \sqrt{\frac{\log(2/\alpha)}{2 n}} \right) \cap [0,1].
\end{equation}
Above, we write $(a\pm b)$ to mean $(a-b,a+b)$ for brevity.

This inequality is derived by what is now known as the Chernoff method~\citep{boucheron_concentration_2013}, involving an analytic upper bound on the moment generating function of a bounded random variable.
However, we will proceed differently; we adopt a hypothesis testing perspective, and couple it with a generalization of the Chernoff method. 
As mentioned in the introduction, we first consider the sequential regime where data are observed one after another over time, since nonnegative supermartingales --- the primary mathematical tools used throughout this paper --- naturally arise in this setup. As we will see,  these sequential bounds can be instantiated for a fixed sample size, yielding tight confidence intervals for this more familiar setting. These will be much tighter than the Hoeffding confidence interval \eqref{eq:hoeffding}, which is itself one such fixed-sample-size instantiation~\citep[Figures 4 and 6]{howard_exponential_2018}.

Let us briefly review some terminology.
For succinctness, we use the notation $X_1^t := (X_1, \dots, X_t)$. Define the sigma-field $\Fcal_t := \sigma(X_1^t)$ generated by $X_1^t$ with $\Fcal_0$ being the trivial sigma-field. The \textit{canonical filtration} $\Fcal := (\Fcal_t)_{t=0}^\infty$ refers to the increasing sequence of sigma-fields $\Fcal_0 \subset \Fcal_1 \subset \Fcal_2 \subset \cdots$. A stochastic process $(M_t)_{t=0}^\infty$ is called a \emph{test supermartingale} for $P$ if $(M_t)_{t=0}^\infty$ is a nonnegative process adapted to $\Fcal$, $M_0 = 1$, and 
\begin{equation}
\small
\EE_P(M_t \mid \Fcal_{t-1}) \leq M_{t-1} \text{ for each $t \geq 1$. }
\end{equation}
$(M_t)_{t=0}^\infty$ is called a \emph{test martingale} for $P$ if the above ``$\leq$'' is replaced with ``$=$''. We sometimes shorten $(M_t)_{t=0}^\infty$ to just $(M_t)$ for brevity. 
If the above property holds simultaneously for all $P \in \Pcal$, we call $(M_t)$ a test (super)martingale for $\Pcal$. 
We say that a sequence $(\lambda_t)_{t=1}^\infty$ is \textit{predictable} if $\lambda_t$ is $\Fcal_{t-1}$-measurable for each $t \geq 1$, meaning $\lambda_t$ can only depend on $X_1^{t-1}$. (In)equalities are interpreted in an almost sure sense.

\subsection{Confidence sequences and the method(s) of mixtures} 
Even though the concentration inequalities thus far have been described in a setting where the sample size $n$ is fixed in advance, all of our ideas stem from a sequential approach towards uncertainty quantification. The goal there is not to produce one confidence set $C_n$, but to produce an infinite sequence $(C_t)_{t=1}^\infty$ such that
\begin{equation}
\small
\label{eqn:CS}
\sup_{P \in \Pcal^\mu} P(\exists t \geq 1: \mu \notin C_t) \leq \alpha.
\end{equation}
Such a $(C_t)_{t=1}^\infty$ is called a \textit{confidence sequence} (CS), and preferably  $\lim_{t\to\infty} C_t = \{\mu\}$. It is known~\citep[Lemma 3]{howard_uniform_2019} that \eqref{eqn:CS} is equivalent to requiring that $\sup_{P \in \Pcal^\mu} P(\mu \notin C_\tau) \leq \alpha$ for arbitrary stopping times $\tau$ with respect to $\Fcal$.

As detailed in the next subsection, one general way to construct a CS is to invert a family of sequential tests based on applying Ville's maximal inequality~\citep{ville_etude_1939} to a test (super)martingale. 
In fact, \citet{ramdas2020admissible} proved that this is (in some formal sense) a universal method to construct CSs, meaning that any other approach can in principle be recovered or dominated by the aforementioned one. 

Designing test supermartingales is nontrivial, and the task of making it have ``power one'' against composite alternatives is often accomplished via the \emph{method of mixtures}. This can arguably be traced back (in a nonstochastic context) to  Ville's 1939 thesis and (in a stochastic context) to \citet{wald_sequential_1945}.  Robbins and collabarators~\citep{robbins_iterated_1968,robbins_statistical_1970,darling_confidence_1967} applied the method to derive CSs, and these ideas have been extended to a variety of nonparametric settings by~\citet{howard_exponential_2018,howard_uniform_2019}. The latter paper describes several variants: conjugate mixtures, discrete mixtures, stitching and inverted stitching. 

These works form our vantage point for the rest of the paper, but we extend them in several ways. First, we describe a ``predictable plug-in''  technique that is implicit in the work of Ville. It can be viewed as a nonparametric extension of a passing remark in the parametric setting in the textbook by Wald~[\citeyear[Eq.10:10]{wald_sequential_1945}] and later explored in the parametric case by~\cite{robbins_expected_1974}.

Like Ville's work in the binary setting, the predictable plug-in method connects the game-theoretic approach and the aforementioned mixture methods --- succinctly, the plugged-in value determines the bet, where each bet is implicitly targeting a different alternative (much like the components of a mixture). Following this translation, prior work on using the method mixtures for confidence sequences can be viewed as using the same betting strategy (mixture distribution) for every value of $m$. We find that there is significant statistical benefit to betting differently for each $m$ (but tied together in a specific way, not in an ad hoc manner). One must typically specify the mixture distribution in advance of observing data, but betting can be viewed as building up a data-dependent mixture distribution on the fly (this led us to previously name our approach as the ``predictable mixture'' method). These sequential perspectives are powerful, even if  only interested in fixed-sample CIs.

\subsection{Nonparametric confidence sequences via sequential testing}

As seen above, it is straightforward to derive a confidence interval for $\mu$ by resorting to a nonparametric concentration inequality like Hoeffding's.
In contrast, it is also well known that CIs are inversions of families of hypothesis tests (as we will see below), so one could presumably derive CIs by first specifying tests.
However, the literature on nonparametric concentration inequalities, such as Hoeffding's, has not commonly utilized a hypothesis testing perspective to derive concentration bounds; for example the excellent book on concentration by~\citet*{boucheron_concentration_2013} has no examples of such an approach. This is presumably because the underlying nonparametric, composite hypothesis tests may be quite challenging themselves, and one may not have nonasymptotically valid solutions or closed-form analytic expressions for these tests. This is in contrast to simple parametric nulls, where it is often easy to calculate a $p$-value based on likelihood ratios. In abandoning parametrics, and thus abandoning likelihood ratios, it may be unclear how to define a powerful test or calculate a nonasymptotically valid $p$-value. This is where betting and test (super)martingales come to the rescue.
\citet[Proposition 4]{ramdas2020admissible} prove that not only do likelihood ratios form test martingales, but every (nonparametric, composite) test martingale is also a (nonparametric, composite) likelihood ratio.

\begin{theorem}[4-step procedure for supermartingale confidence sets]
\label{theorem:4step}
On observing $(X_t)_{t=1}^\infty \sim P$ from $P \in \Pcal^\mu$ for some unknown $\mu \in [0, 1]$, do
\begin{tcolorbox}[%
    enhanced, 
    breakable,
    frame hidden,
    overlay broken = {
        \draw[line width=0.2mm, gray, rounded corners]
        (frame.north west) rectangle (frame.south east);},
    ]{}
    \label{recipe}
    \begin{enumerate}[(a)]
        \item Consider the composite null hypothesis $H_0^m: P \in \Pcal^m$ for each $m \in [0,1]$.
        \item For each index $m \in [0, 1]$, construct a nonnegative process $M_t^m \equiv M^m(X_1, \dots, X_t)$ such that the process $(M_t^\mu)_{t=0}^\infty$ indexed by $\mu$ has the following property: for each $P \in \Pcal^\mu$, $(M_t^\mu)_{t=0}^\infty$ is upper-bounded by a test (super)martingale for $P$, possibly a different one for each $P$.
        \item For each $m \in [0, 1]$ consider the sequential test $(\phi_t^m)_{t=1}^\infty$ defined by 
        \[ \phi_t^m := \1(M_t^m \geq 1/\alpha), \]
        where $\phi_t^m = 1$ represents a rejection of $H_0^m$ after $t$ observations.
        \item Define $C_t$ as the set of $m \in [0, 1]$ for which $\phi_t^m$ fails to reject $H_0^m$:
        \[ C_t := \left \{ m \in [0, 1] : \phi_t^m = 0 \right \}. \]

    \end{enumerate}
\end{tcolorbox}
\noindent Then $(C_t)_{t=1}^\infty$ is a $(1-\alpha)$-confidence sequence for $\mu$: 
\( \sup_{P \in \Pcal^\mu} P(\exists t \geq 1 : \mu \notin C_t) \leq \alpha. \)
\end{theorem}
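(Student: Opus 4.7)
The plan is to directly reduce the statement to Ville's maximal inequality applied to the $m=\mu$ instance of the constructed process. The crucial observation is that the ``bad event'' --- namely that $\mu$ ever exits $C_t$ --- has a clean rewriting in terms of the supremum of $M_t^\mu$, and this supremum is dominated by the supremum of a genuine nonnegative supermartingale starting at $1$, which is exactly the object Ville's inequality is designed to control.

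Concretely, I would start by unpacking the definition of $C_t$ at the true mean. By step (d), $\mu \notin C_t$ iff $\phi_t^\mu = 1$, which by step (c) is equivalent to $M_t^\mu \geq 1/\alpha$. Hence
\[
\{\exists t \geq 1 : \mu \notin C_t\} \;=\; \{\exists t \geq 1 : M_t^\mu \geq 1/\alpha\} \;=\; \bigl\{\sup_{t \geq 1} M_t^\mu \geq 1/\alpha\bigr\}.
\]
Next I would invoke the hypothesis in step (b): there exists a nonnegative $\Pcal^\mu$-supermartingale $(N_t)_{t=0}^\infty$ with $N_0 = 1$ such that $M_t^\mu \leq N_t$ almost surely for every $t$. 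Therefore $\sup_{t \geq 1} M_t^\mu \leq \sup_{t \geq 0} N_t$, so the event above is contained in $\{\sup_{t \geq 0} N_t \geq 1/\alpha\}$.

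Finally, for every $P \in \Pcal^\mu$, Ville's inequality applied to the nonnegative $P$-supermartingale $(N_t)$ with $\EE_P N_0 \leq 1$ yields
\[
P\bigl(\sup_{t \geq 0} N_t \geq 1/\alpha\bigr) \;\leq\; \alpha.
\]
Chaining the inclusion with this bound gives $P(\exists t \geq 1 : \mu \notin C_t) \leq \alpha$, and taking the supremum over $P \in \Pcal^\mu$ completes the argument.

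There is essentially no ``hard part'' here: the entire content of the theorem is bookkeeping around Ville's inequality, and the cleverness has been pushed into the hypothesis of step (b) (constructing a process that is dominated by a nonnegative supermartingale uniformly over $P \in \Pcal^\mu$). The only place where one must be a little careful is ensuring that the dominating supermartingale is measurable with respect to the canonical filtration and is a supermartingale simultaneously under every $P \in \Pcal^\mu$ (i.e., that it is truly a $\Pcal^\mu$-supermartingale as required), so that Ville's inequality can be invoked with the same process for every $P$; but this is guaranteed by the phrasing of the assumption in (b).
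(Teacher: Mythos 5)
Your proof is correct and follows essentially the same route as the paper: rewrite the "miscoverage at some time" event via the definitions in steps (c)--(d) as the event that $M_t^\mu$ crosses $1/\alpha$, then apply Ville's inequality to the dominating nonnegative supermartingale from step (b). You are actually a touch more careful than the paper's own proof, which invokes Ville's inequality directly on $\phi_t^\mu$ without spelling out the intermediate domination step $M_t^\mu \leq N_t$; that step is implicit in the paper but made explicit, correctly, in your argument.
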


The above result relies centrally on Ville's inequality~\citep{ville_etude_1939}, which states that if $(L_t) \equiv (L_t)_{t=1}^\infty$ is (upper bounded by) a test martingale for $P$, then we have $P(\exists t \geq 1: L_t \geq 1/\alpha) \leq \alpha$. See~\cite[Section 6]{howard_exponential_2018} for a short proof.

\begin{proof}[\myproofname{Theorem~\ref{theorem:4step}}]
By Ville's inequality, $\phi_t^m$ is a level-$\alpha$ sequential hypothesis test, in the sense that for any $P \in \Pcal^\mu$, we have
\( \small P(\exists t \geq 1 : \phi_t^\mu = 1) \leq \alpha. \)
Now, by definition of the sets $(C_t)_{t=1}^\infty$, we have that $\mu \notin C_t$ at some time $t \geq 1$ if and only if there exists a time $t \geq 1$ such that $\phi_t^\mu = 1$, and hence
\begin{equation}
  \small \sup_{P \in \Pcal^\mu} P(\exists t \geq 1 : \mu \notin C_t) = \sup_{P\in \Pcal^\mu} P(\exists t \geq 1 : \phi_t^\mu = 1) \leq \alpha,
\end{equation}
which completes the proof. 
\myqed
\end{proof}
At a high level, this approach is not new. Composite test supermartingales for $\Pcal$ have been used in past works on concentration inequalities and/or confidence sequences (which are related but different), from the initial series of works by Robbins and collaborators in the 1960s and 1970s, to \cite{de_la_pena_pseudo-maximization_2007}, to recent work by \citet[Section 7.2]{jun2019parameter} and \citet{howard_exponential_2018,howard_uniform_2019}. Test martingales have also been explicitly considered in some hypothesis testing problems~\citep{vovk2005algorithmic,shafer_test_2011}; the latter paper popularized the term ``test martingale'' that we borrow, but unlike us, used it primarily for singleton $\Pcal=\{P\}$. 
We highlight an (independently developed) unpublished preprint by~\citet{hendriks2018test} that has overlaps with the current paper in the with-replacement setting, and some complementary results. 
For singleton (parametric) classes $\mathcal P$, Wald's sequential likelihood ratio statistic is a test martingale, so all of the above methods can be viewed as inverting nonparametric or composite generalizations of  Wald's tests. 

Nevertheless, we make two additional comments. First, the requirement in step (b) of the algorithm that the process $(M^m_t)$ be \emph{upper-bounded} by a test (super)martingale for each $P \in \Pcal$ was posited by~\cite{howard_exponential_2018}, and has recently been christened a e-process for $\Pcal$ \citep{ramdas2021exch} (see also~\cite{grunwald_safe_2019}). E-processes are strictly more general than test (super)martingales for $\Pcal$ in the sense that there exist many interesting classes $\Pcal$ for which nontrivial test (super)martingales do not exist, but one can design powerful e-processes for $\Pcal$. Second, one must take care to design test (super)martingales for each $m$  that are tied together across $m$ in a nontrivial manner that improves statistical power while maintaining computational tractability. All the confidence sets in this paper (both in the sequential and batch settings) will be based on this 4-step procedure, but with different carefully chosen processes $(M_t^m)$. In the language of betting, we will come up with new, powerful ways to bet for each $m$, and also tie together the betting strategies for different $m$.


\subsection{Connections to the Chernoff method}
By virtue of $(C_t)_{t=1}^\infty$ being a time-uniform confidence sequence, we also have that $C_n$ is a $(1-\alpha)$-confidence interval for $\mu$ for any fixed sample size $n$. In fact, the celebrated Chernoff method results in such a confidence interval. So, how exactly are the two approaches related? The answer is simple: Theorem~\ref{theorem:4step} generalizes and improves on the Chernoff method. To elaborate, recall that Hoeffding proved that 
\begin{equation}
\small
\sup_{P \in \Pcal^\mu} \EE_P [\exp(\lambda (X-\mu) - \lambda^2/8)] \leq 1, \text{ for any $\lambda \in \RR$, }
\end{equation}
and so if $X_1^n$ are independent (say), the following process can be used in Step (b): 
\begin{equation}
\small
M^m_t := \prod_{i=1}^t \exp\left(\lambda(X_i-m) - \lambda^2/8\right).
\end{equation}
Usually, the only fact that matters for the Chernoff method is that $\EE_P[M^m_t] \leq 1$, and Markov's inequality is applied (instead of Ville's) in Step (c).
To complete the story, the Chernoff method then involves a smart choice for $\lambda$. Setting $\lambda := \sqrt{8\log (1/\alpha) / n}$ recovers the familiar Hoeffding inequality for the batch sample-size setting. Taking a union bound over $X_1^n$ and $-X_1^n$ yields the Hoeffding confidence interval \eqref{eq:hoeffding} exactly. Using our 4-step approach, the resulting confidence sequence is a time-uniform generalization of Hoeffding's inequality, recovering the latter precisely including constants at time $n$; see~\cite{howard_exponential_2018} for this and other generalizations.


In recent parlance, a statistic like $M^m_t$, which has at most unit expectation under the null, has been called a betting score~\citep{shafer2019language} or an $e$-value~\citep{vovk_testing_randomness_2019} and their relationship to sequential testing \citep{grunwald_safe_2019} and estimation \citep{ramdas2020admissible} as an alternative to $p$-values has been recently examined. In parametric settings with singleton nulls and alternative hypotheses, the likelihood ratio is an $e$-value. For composite null testing, the split likelihood ratio statistic~\citep{wasserman2020universal} (and its variants) are e-values. However, our  setup is more complex: $\Pcal^m$ is highly composite, there is no common dominating measure to define likelihood ratios, but Hoeffding's result yields an $e$-value. (In fact, it yields test supermartingale and hence an e-process, which is an e-value even at stopping times.) 

In summary, the Chernoff method is simply one powerful, but as it turns out, rather limited way to construct an $e$-value. 
This paper provides better constructions of $M^m_t$, whose expectation is exactly equal to one, thus removing one source of looseness in the Hoeffding-type approach above, as well as better ways to pick the tuning parameter $\lambda$, which will correspond to our bet.


\section{Warmup: exponential supermartingales and predictable plug-ins}
\label{section:warmup}

A central technique for constructing confidence sequences (CSs) is Robbins' \textit{method of mixtures}~\citep{robbins_statistical_1970}, see also~\cite{darling_confidence_1967,robbins_iterated_1968,robbins_boundary_1970,robbins_class_1972,robbins_expected_1974}. Related ideas of ``pseudo-maximization'' or Laplace's method were further popularized and extended by \citet{de_la_pena_self-normalized_2004,de_la_pena_pseudo-maximization_2007,de_la_pena_self-normalized_2009}, and has led to several other followup works~\citep{abbasi2011improved,balsubramani_sharp_2014,howard_exponential_2018,kaufmann2018mixture}. 

However, beyond the case when the data are (sub)-Gaussian, the method of mixtures rarely leads to a closed-form CS; it yields an \emph{implicit} construction for $C_t$ which can sometimes be computed efficiently (e.g. using conjugate mixtures~\citep{howard_uniform_2019}), but is otherwise analytically opaque and computationally tedious. Below, we provide an alternative construction --- called the ``predictable plug-in'' --- that is exact, explicit and efficient (computationally and statistically). 

In the next section, our CSs avoid exponential supermartingales, and are much tighter than the recent state-of-the-art in~\cite{howard_uniform_2019}. The ones in this section match the latter but are simpler to compute, so we present them first.

\subsection{Predictable plug-in Cramer-Chernoff supermartingales}
Suppose $(X_t)_{t=1}^\infty \sim P$ for some $P \in \Pcal^\mu$ where $\Pcal^\mu$ is the set of all distributions on $\prod_{i=1}^\infty [0, 1]$ so that $\EE_P(X_t \mid \Fcal_{t-1}) = \mu$ for each $t$. The Hoeffding process $(M_t^H(m))_{t=0}^\infty$ for a given candidate mean $m \in [0, 1]$ is given by
\begin{equation} \small M_t^H(m) := \prod_{i=1}^t \exp \left ( \lambda(X_i - m) - \psi_H(\lambda) \right ) \end{equation}
with $M_0^H(m) \equiv 1$ by convention. Here $\psi_H(\lambda) := \lambda^2/8$ is an upper bound on the cumulant generating function (CGF) for $[0, 1]$-valued random variables with $\lambda \in \mathbb R$ chosen in some strategic way. For example, to maximize $M_n^H(m)$ at a fixed sample size $n$, one would set $\lambda := \sqrt{8 \log (1/\alpha) / n}$ as in the classical fixed-time Hoeffding inequality \citep{hoeffding_probability_1963}.

Following \citet{howard_uniform_2019}, we have that $(M_t^H(\mu))_{t=0}^\infty$ is a nonnegative supermartingale with respect to the canonical filtration. Therefore, by Ville's maximal inequality for nonnegative supermartingales~\citep{ville_etude_1939,howard_exponential_2018},
\begin{equation}
\small
\label{eq:HoeffdingVille}
P\left ( \exists t \geq 1 : M_t^H(\mu) \geq 1/\alpha \right ) \leq \alpha.
\end{equation}
Robbins' method of mixtures proceeds by noting that $\int_{\lambda \in \RR} M_t^H(m) dF(\lambda)$ 
is also a supermartingale for any ``mixing'' probability distribution $F(\lambda)$ on $\RR$ and thus 
\begin{equation} \small P\left ( \exists t \geq 1 : \int_{\lambda \in \RR} M_t^H(\mu) dF(\lambda) \geq 1/\alpha \right ) \leq \alpha. \end{equation}
In this particular case, if $F(\lambda)$ is taken to be the Gaussian distribution, then the above integral can be computed in closed-form \citep{howard_exponential_2018}. For other distributions or altogether different supermartingales (i.e. other than Hoeffding), the integral may be computationally tedious or intractable.

To combat this, instead of fixing $\lambda \in \RR$ or integrating over it, consider constructing a sequence $\lambda_1, \lambda_2, \dots$ which is predictable, and thus $\lambda_t$ can depend on $X_1^{t-1}$. Then, 
\begin{equation} 
\label{eq:hoeffdingSuperMG}
\small 
M_t^{\PMH}(m) := \prod_{i=1}^t \exp(\lambda_i(X_i - m) - \psi_H(\lambda_i)) \end{equation}
is also a test supermartingale for $\Pcal^m$ (and hence Ville's inequality applies). We call such a sequence $(\lambda_t)_{t=1}^\infty$ a \textit{predictable plug-in}. While not always explicitly referred to by this exact name, predictable plug-ins have appeared in works on parametric sequential analysis by \citet[Eq. (10:10)]{wald_sequential_1947}, \citet[Eq. (4)]{robbins_expected_1974}, \citet{dawid1984present}, and \citet{lorden_nonanticipating_2005} as well as in the information theory literature \citep{rissanen1984universal}. 
As we will see, these techniques also prove useful in nonparametric testing and estimation problems both in sequential and batch settings.

Using $M_t^\PMH(m)$ as the process in Step (b) of Theorem~\ref{theorem:4step} results in a lower CS for $\mu$, while constructing an analogous supermartingale using $(-X_t)_{t=1}^\infty$ yields an upper CS. Combining these by taking a union bound results in the predictable plug-in Hoeffding CS which we introduce now. 
\begin{proposition}[Predictable plug-in Hoeffding CS {\textbf{[PrPl-H]}}]
\label{proposition:predmixHoeffding}
Suppose that $(X_t)_{t=1}^\infty \sim P$ for some $P \in \Pcal^\mu$. For any chosen real-valued predictable  $(\lambda_t)_{t=1}^\infty$,
\[ C_t^\PMH := \left ( \frac{\sum_{i=1}^t \lambda_i X_i }{\sum_{i=1}^t \lambda_i} \pm \frac{\log (2/\alpha) + \sum_{i=1}^t \psi_H (\lambda_i )}{\sum_{i=1}^t \lambda_i} \right ) ~~~\text{forms a $(1-\alpha)$-CS for $\mu$,} \]
as does its running intersection, $\bigcap_{i \leq t} C_i^H$. 
\end{proposition}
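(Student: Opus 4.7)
The plan is to instantiate the 4-step procedure of Theorem~\ref{theorem:4step} with a symmetrized Hoeffding test martingale. For each candidate mean $m \in [0,1]$, define the two one-sided processes
\[ M_t^{\pm, m} := \prod_{i=1}^t \exp\bigl(\pm\lambda_i(X_i - m) - \psi_H(\lambda_i)\bigr) \]
and set $M_t^m := \tfrac{1}{2}\bigl(M_t^{+,m} + M_t^{-,m}\bigr)$, so that $M_0^m = 1$. The first step is to verify that $M_t^\mu$ is a nonnegative $\Pcal^\mu$-supermartingale. Since $\lambda_i$ is $\Fcal_{i-1}$-measurable and $\psi_H$ is even, conditional Hoeffding's lemma applied inside the filtration gives $\EE_P[\exp(\pm\lambda_i(X_i - \mu)) \mid \Fcal_{i-1}] \leq \exp(\psi_H(\lambda_i))$ for any $P \in \Pcal^\mu$, so both $M_t^{\pm,\mu}$ are supermartingales, and convex combinations inherit the property.

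Next I would apply Theorem~\ref{theorem:4step} to conclude that $C_t := \{m \in [0,1] : M_t^m < 1/\alpha\}$ is a valid $(1-\alpha)$-CS. To recover the stated interval, it suffices to show the inclusion $C_t \subseteq C_t^\PMH$, since enlarging a CS preserves coverage. Writing
\[ M_t^m = \exp\Bigl(-\sum_{i=1}^t \psi_H(\lambda_i)\Bigr)\cdot \cosh\Bigl(\sum_{i=1}^t \lambda_i(X_i - m)\Bigr), \]
and using the elementary bound $\cosh(x) \geq e^{|x|}/2$, the inequality $M_t^m < 1/\alpha$ unfolds to
\[ \Bigl|\sum_{i=1}^t \lambda_i(X_i - m)\Bigr| < \log(2/\alpha) + \sum_{i=1}^t \psi_H(\lambda_i), \]
which, upon dividing by $\sum_i \lambda_i$ (or its absolute value, if negative, interpreting $(a\pm b)$ as a symmetric interval around $a$), is exactly the statement $m \in C_t^\PMH$. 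Validity of the running intersection $\bigcap_{s \leq t} C_s^\PMH$ is then immediate, since $\{\exists t : \mu \notin \bigcap_{s \leq t} C_s^\PMH\}$ coincides with $\{\exists t : \mu \notin C_t^\PMH\}$.

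The proof is mostly algebraic once one commits to the symmetrized martingale; the only mild obstacle is deciding how to bundle the two one-sided processes $M_t^{\pm,m}$ into a single $e$-process that inverts cleanly into a two-sided interval. The averaging trick is precisely what transmutes the factor of $2$ in $\cosh(x) \geq e^{|x|}/2$ into the $\log(2/\alpha)$ appearing in the radius of $C_t^\PMH$. An alternative route --- applying Ville's inequality separately to $M_t^{+,m}$ and $M_t^{-,m}$ at level $\alpha/2$ and union-bounding --- yields the identical interval but fits less snugly into the single-process template of Theorem~\ref{theorem:4step}.
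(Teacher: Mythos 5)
Your proof is correct, and it follows a route that is closely related to but cleaner than the paper's. The paper applies Ville's inequality separately to the two one-sided processes $M_t^{+,\mu}$ and $M_t^{-,\mu}$ (the second by passing to $(-X_t)$), each at level $\alpha/2$, and then union-bounds. You instead fold the two into a single e-process $M_t^m = \tfrac12(M_t^{+,m} + M_t^{-,m})$ --- which is exactly a two-point (predictable) mixture over the sign of the bet --- apply Ville's inequality once, and extract the interval via the elementary bound $\cosh(x) \geq e^{|x|}/2$. Your argument fits the single-process template of Theorem~\ref{theorem:4step} verbatim, whereas the paper's proof tacitly invokes the 4-step procedure twice and then union-bounds. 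The two roads arrive at the identical interval $C_t^\PMH$, because $\{\max(M^+_t, M^-_t) \geq 2/\alpha\} \subseteq \{M^+_t + M^-_t \geq 2/\alpha\}$ gives the mixture at least as much power, but the slack in $\cosh(x) \geq e^{|x|}/2$ (loose near $x=0$) exactly reverses the gain once you insist on a closed-form symmetric interval. In short: the mixture approach is a tighter e-process for the same final CS; the union-bound approach is shorter to state. One small caveat you already flagged: the interval as displayed implicitly assumes $\sum_i \lambda_i > 0$ so that $(a\pm b)$ denotes a nonempty interval; the paper's recommended schedule $\lambda_t^\PMH > 0$ makes this automatic, but stating it would tighten the exposition.
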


\noindent A sensible choice of predictable plug-in is given by
\begin{equation} \small \lambda_t^\PMH := \sqrt{\frac{8 \log ( 2/\alpha )}{t \log ( t+ 1)}} \land 1, \end{equation}
for reasons which will be discussed in Section~\ref{section:howToMix}.
The proof of Proposition~\ref{proposition:predmixHoeffding} is provided in Section~\ref{proof:hoeffding}. As alluded to earlier, predictable plug-ins are actually the \emph{least} interesting when using Hoeffding's sub-Gaussian bound because of the available closed form Gaussian-mixture boundary. However, the story becomes more interesting when either (a) the method of mixtures is computationally opaque or complex, or (b) the optimal choice of $\lambda$ is based on unknown but estimable quantities. Both (a) and (b) are issues that arise when computing empirical Bernstein-type CSs and CIs. In the following section, we present predictable plug-in empirical Bernstein-type CSs and CIs which are both computationally and statistically efficient.

\subsection{Application: closed-form empirical Bernstein confidence sets}

To prepare for the results that follow, consider the empirical Bernstein-type process,
\begin{equation} 
\label{eq:EBSuperMG}
\small 
M_t^{\PMEB}(m) := \prod_{i=1}^t \exp\left \{ \lambda_i (X_i-m) - v_i \psi_E(\lambda_i) \right \} 
\end{equation}
where, following \citet{howard_exponential_2018,howard_uniform_2019}, we have defined $v_i := 4(X_i - \widehat \mu_{i-1})^2$ and
\begin{equation}\label{eq:psi-E}
\small
\psi_E(\lambda) := (- \log (1-\lambda) - \lambda)/4 \quad \text { for } \lambda \in [0, 1).
\end{equation}
As we revisit later, the appearance of the constant 4 is to facilitate easy comparison to $\psi_H$, since $\lim_{\lambda \to 0^+} \psi_E(\lambda)/\psi_H(\lambda) = 1$. In short, $\psi_E$ is nonnegative, increasing on $[0,1)$, and grows quadratically near 0.

Using $M_t^\PMEB (m)$ in Step (b) in Theorem~\ref{theorem:4step} --- 
and applying the same procedure but with $(X_t)_{t=1}^\infty$ and $m$ replaced by $(-X_t)_{t=1}^\infty$ and $-m$ combined with a union bound over the resulting CSs --- we get the following CS.

\begin{theorem}[Predictable plug-in empirical Bernstein CS {\textbf{[PrPl-EB]}}]
\label{theorem:EBCS}
\noindent Suppose $(X_t)_{t=1}^\infty \sim P$ for some $P \in \Pcal^\mu$. For any  $(0, 1)$-valued predictable $(\lambda_t)_{t=1}^\infty$,
\[ C_t^\PMEB := \left ( \frac{\sum_{i=1}^t \lambda_i X_i}{\sum_{i=1}^t \lambda_i} \pm \frac{\log(2/\alpha) +  \sum_{i=1}^tv_i \psi_E(\lambda_i) }{\sum_{i=1}^t \lambda_i} \right ) ~~~\text{forms a $(1-\alpha)$-CS for $\mu$, }\]
as does its running intersection, $\bigcap_{i \leq t} C_i^\PMEB$.
\end{theorem}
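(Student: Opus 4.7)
The plan is to invoke the 4-step procedure of Theorem~\ref{theorem:4step} with two one-sided processes and a union bound across them. For each $m \in [0,1]$ introduce
\[
M_t^{\pm}(m) := \prod_{i=1}^{t} \exp\bigl(\pm \lambda_i(X_i - m) - v_i\, \psi_E(\lambda_i)\bigr), \qquad M_0^{\pm}(m) := 1,
\]
which are nonnegative and adapted since $\lambda_i \in (0,1)$ and $\widehat{\mu}_{i-1}$ (hidden inside $v_i$) are predictable. The first task is to show that both $(M_t^{+}(\mu))_t$ and $(M_t^{-}(\mu))_t$ are nonnegative $\Pcal^\mu$-supermartingales starting at $1$; once that is done, Ville's inequality applied at level $\alpha/2$ to each, combined with a union bound, implies that with probability at least $1-\alpha$ neither process ever reaches $2/\alpha$. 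Inverting the thresholds $M_t^{\pm}(m) < 2/\alpha$ in the variable $m$ by taking logarithms and dividing by $\sum_{i \leq t}\lambda_i > 0$ will yield the two endpoints of the interval $C_t^{\PMEB}$ in the theorem.

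The main technical obstacle is the one-step supermartingale bound
\[
\EE\bigl[\exp\bigl(\pm \lambda_i(X_i - \mu) - 4\psi_E(\lambda_i)(X_i - \widehat{\mu}_{i-1})^2\bigr) \bigm| \Fcal_{i-1}\bigr] \leq 1.
\]
I plan to establish this in two moves. First, I will invoke the pointwise Fan-type inequality
\[
\exp\bigl(\pm \lambda x - 4\psi_E(\lambda)\, x^2\bigr) \leq 1 \pm \lambda x, \qquad \lambda \in [0,1),\ x \in [-1,1],
\]
which is a standard ingredient in prior empirical Bernstein constructions (see \citet{howard_exponential_2018,howard_uniform_2019}) and can be verified by an elementary calculus check since $4\psi_E(\lambda) = -\log(1-\lambda) - \lambda$. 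Second, I will reconcile the mismatch between centering at $\mu$ in the linear term and centering at $\widehat{\mu}_{i-1}$ in the quadratic term via the decomposition $\lambda_i(X_i - \mu) = \lambda_i(X_i - \widehat{\mu}_{i-1}) + \lambda_i(\widehat{\mu}_{i-1} - \mu)$. The second summand is $\Fcal_{i-1}$-measurable and pulls out of the conditional expectation as the factor $\exp(\lambda_i(\widehat{\mu}_{i-1} - \mu))$; applying the pointwise bound to what remains with $x = \pm(X_i - \widehat{\mu}_{i-1}) \in [-1,1]$, taking the conditional expectation (which turns $X_i - \widehat{\mu}_{i-1}$ into $\mu - \widehat{\mu}_{i-1}$), and finally using $1 + y \leq e^y$ makes the two cross-terms cancel exactly, yielding the desired bound of $1$.

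The remainder is bookkeeping. Rewriting $\{M_t^{+}(m) < 2/\alpha\}$ in logarithmic form and solving for $m$ gives the lower endpoint $\frac{\sum \lambda_i X_i}{\sum \lambda_i} - \frac{\log(2/\alpha) + \sum v_i \psi_E(\lambda_i)}{\sum \lambda_i}$, and $\{M_t^{-}(m) < 2/\alpha\}$ symmetrically gives the matching upper endpoint. Thus $\{\exists t : \mu \notin C_t^{\PMEB}\}$ is contained in $\{\exists t : M_t^{+}(\mu) \geq 2/\alpha\} \cup \{\exists t : M_t^{-}(\mu) \geq 2/\alpha\}$, which the union-bounded Ville inequality controls at level $\alpha$. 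The running-intersection claim follows for free, since $\{\mu \in \bigcap_{i \leq t} C_i^{\PMEB}\}$ is just the intersection over $i \leq t$ of the events $\{\mu \in C_i^{\PMEB}\}$ and so inherits the same CS guarantee.
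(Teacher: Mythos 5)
Your proposal is correct and follows essentially the same route as the paper: the same Fan-type pointwise bound $\exp(\xi\lambda - 4\xi^2\psi_E(\lambda)) \leq 1 + \xi\lambda$, the same re-centering decomposition $\lambda_i(X_i-\mu) = \lambda_i(X_i-\widehat\mu_{i-1}) + \lambda_i(\widehat\mu_{i-1}-\mu)$ to pull out the predictable factor, the same use of $1+y \leq e^y$ to cancel the cross-term after taking conditional expectations, and the same two-sided union bound (the paper phrases it as applying the one-sided result to $(X_t)$ and $(-X_t)$, which is identical to your $M_t^{\pm}$ pair at level $\alpha/2$ each). The only cosmetic difference is that the paper factors the supermartingale step through its Lemma~\ref{lemma:predmix} and states the two-sidedness implicitly via the earlier Hoeffding proof, whereas you write both processes out explicitly.
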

In particular, we recommend the predictable plug-in $(\lambda^\PMEB_t)_{t=1}^\infty$ given by
\begin{equation} 
\label{eq:lambda_pm-eb}
\small
\lambda^\PMEB_t := \sqrt{\frac{2 \log (2/\alpha)}{\widehat \sigma_{t-1}^2 t \log (1+t) }} \land c, ~~~ \widehat \sigma_{t}^2 := \frac{\tfrac1{4} + \sum_{i=1}^t (X_i - \widehat \mu_i)^2}{t + 1}, ~~~ \widehat \mu_t := \frac{\tfrac1{2} + \sum_{i=1}^t X_i}{t + 1} 
\end{equation}
for some $c \in (0, 1)$ (a reasonable default being 1/2 or 3/4). This choice was inspired by the fixed-time empirical Bernstein as well as the widths of time-uniform CSs (more details are provided in Section~\ref{section:howToMix}). The sequences of estimators $(\widehat \mu_t)_{t=1}^\infty$ and $(\widehat \sigma_t^2)_{t=1}^\infty$ can be interpreted as predictable, regularized sample means and variances. This technique was employed by \citet{kotlowski2010following} for misspecified exponential families in the so-called \textit{maximum likelihood plug-in strategy}. 

The proof of Theorem~\ref{theorem:EBCS} relies on establishing that $M_t^\PMEB (m)$ is a test supermartingale for $\Pcal^m$. This latter fact is related to, but cannot be derived directly from, a powerful deterministic inequality for bounded numbers due to \citet{fan_exponential_2015}. One needs an additional trick from \citet[Section A.8]{howard_uniform_2019} which swaps $(X_i-m)^2$ with $(X_i-\widehat \mu_{i-1})^2$, for any predictable $\widehat \mu_{i-1}$, within the variance term $v_i$. It is this additional piece which yields both tighter and \emph{closed-form} CSs; details are in Section~\ref{proof:EBCS}. 
We remark that before taking the running intersection, the above intervals are symmetric around the weighted sample mean, but this symmetry will not carry forward to other CSs in the paper.

\begin{figure}[!htbp]
    \centering \textbf{Time-uniform empirical Bernstein confidence sequences}
    \includegraphics[width=0.9\textwidth]{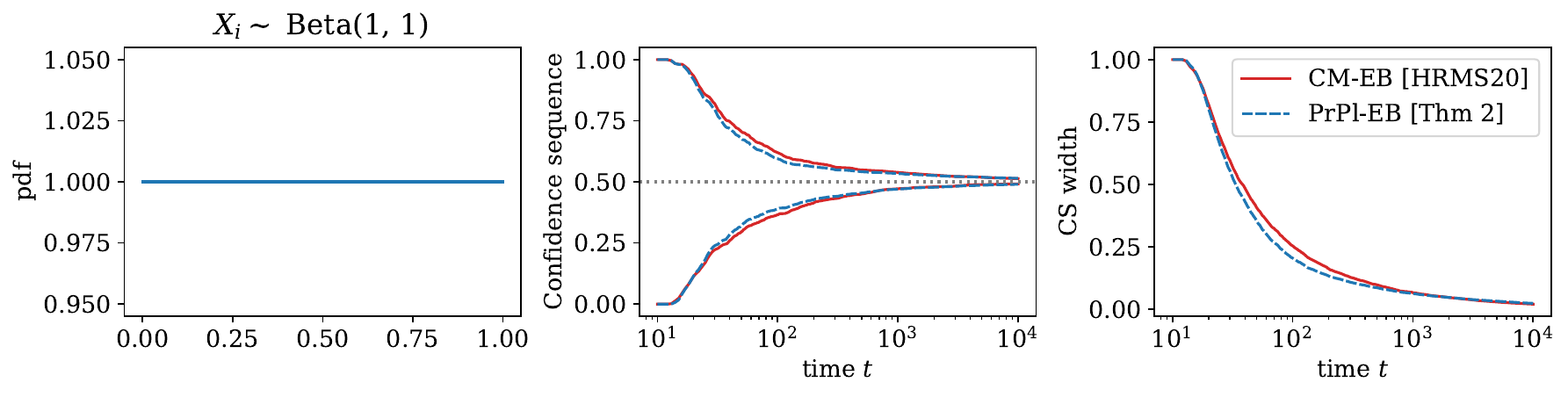}
    \caption{Empirical Bernstein CSs produced via a predictable plug-in (PrPl) with $(\lambda_t)_{t=1}^\infty$ from \eqref{eq:lambda_pm-eb}  match (or slightly improve) those obtained via conjugate mixtures (CM) by \cite{howard_uniform_2019}; the former is closed-form, but the latter is not and requires numerical methods.}
    \label{fig:CMvsPM}
\end{figure}

Figure~\ref{fig:CMvsPM} compares the conjugate mixture empirical-Bernstein CS (CM-EB) due to \citet{howard_uniform_2019} with our predictable plug-in empirical-Bernstein CS ($\PMEB$). The two CSs perform similarly,
    but our closed-form $\PMEB$ is over 500 times faster to compute than CM-EB (in our experience) which requires root finding at each step. However, our later bounds will be tighter than both of these. 

\begin{remark}
\label{remark:EBCI}
Theorem~\ref{theorem:EBCS} yields computationally and statistically efficient empirical Bernstein-type CIs for a fixed sample size $n$. 
Recalling~\eqref{eq:lambda_pm-eb}, we recommend using $\bigcap_{i \leq n}C_i^\PMEB$ along with the predictable sequence
\begin{equation} \small \lambda_{t}^{\VAEB} := \sqrt{\frac{2 \log (2/\alpha)}{n \widehat \sigma_{t-1}^2}} \land c.
\end{equation}
We call the resulting confidence interval the ``predictable plug-in empirical Bernstein confidence interval'' or \textbf{[PrPl-EB-CI]} for short; see Figure~\ref{fig:PMEBvsMP}.
\end{remark}

If $X_1, \dots, X_n$ are independent, then at the expense of computation, the above CI can be effectively derandomized to remove the effect of the ordering of variables. One can randomly permute the data $B$ times to obtain $(\widetilde X_{1, b}, \dots, \widetilde X_{n,b})$ and correspondingly compute $\widetilde M_{n, b}^\PMEB(m)$, one for each permutation ${b \in \{1,\dots, B\}}$. Averaging over these permutations, define
\( \widetilde M_n^\PMEB(m) := \frac{1}{B}\sum_{b=1}^B \widetilde M_{n,b}^\PMEB(m). \)
For each $b$, $M_{n,b}^\PMEB(\mu)$ has expectation at most one (by linearity of expectation). Thus, $\widetilde M_n^\PMEB(\mu)$ is a $e$-value (i.e. it has expectation at most 1). By Markov's inequality, 
$\widetilde C_n^\PMEB := \{ m \in [0, 1] : \widetilde M_n^\PMEB(m) < 1/\alpha \}$
is a $(1-\alpha)$-CI for $\mu$. This set is not available in closed-form and the intersection $\bigcap_{i\leq n} \widetilde C_i^\PMEB$ no longer yield a valid CI\@. In our experience, this derandomization procedure neither helps nor hurts. In any case, both $\bigcap_{i\leq n} C_i$ and $\widetilde C_n$ will be significantly improved in Section~\ref{section:hedgedCapitalProcess}.

In Section~\ref{section:VAEB_scale}, we show that in iid settings the width of [PrPl-EB-CI] scales with the true (unknown) standard deviation:
\begin{equation}\label{eq:asymptotic-eb-width}
    \small
    \sqrt{n} \left (\frac{\log(2/\alpha) + \sum_{i=1}^n v_i \psi_E(\lambda_i)}{\sum_{i=1}^n \lambda_i} \right ) \xrightarrow{a.s.} \sigma \sqrt{2 \log (2/\alpha)}.
\end{equation}
Notice that \eqref{eq:asymptotic-eb-width} is the same asymptotic behavior that one would observe for CIs based on Bernstein's or Bennett's inequalities, both of which require knowledge of the true variance $\sigma^2$, while [PrPl-EB-CI] does not. This is in contrast to the empirical Bernstein CIs of \citet{maurer_empirical_2009} whose limit would be $\sigma \sqrt{2 \log(4/\alpha)}$. In the maximum variance case where $\sigma = 1/2$, \eqref{eq:asymptotic-eb-width} yields the same asymptotic behavior as Hoeffding's CI \eqref{eq:hoeffding}. 
\begin{figure}[!htbp]
    \centering \textbf{Fixed-time empirical Bernstein confidence intervals}
    \includegraphics[width=0.9\textwidth]{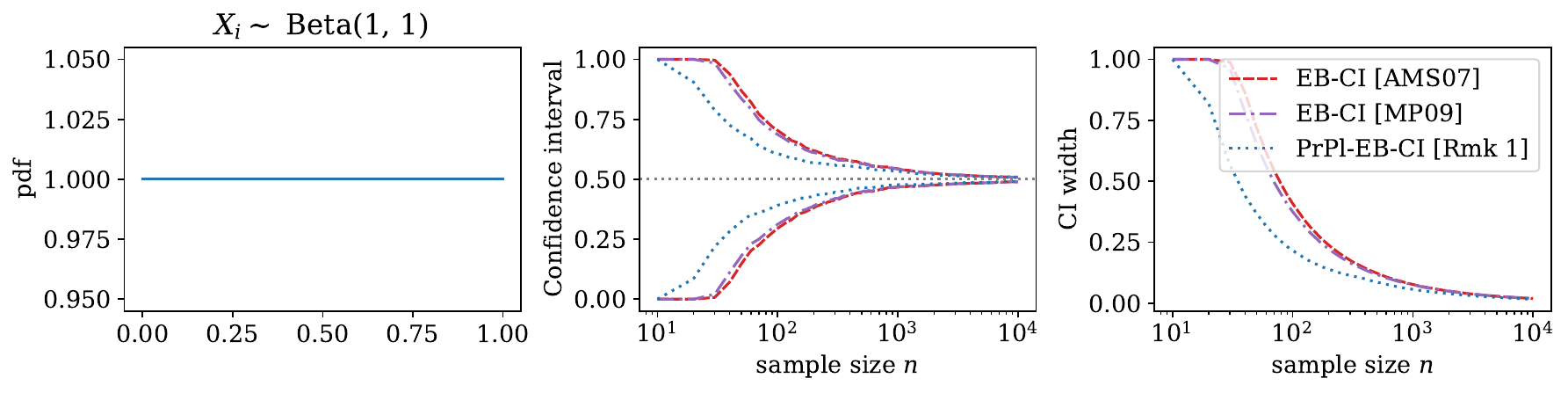}
    \caption{Our predictable plug-in (PrPl) empirical Bernstein (EB) CI is significantly tighter than those of \cite{maurer_empirical_2009}  and \cite{audibert2007tuning}.}
    \label{fig:PMEBvsMP}
\end{figure}

Until now, we presented various predictable plug-ins --- $(\lambda_t^\PMH)_{t=1}^\infty$, $(\lambda_t^\PMEB)_{t=1}^\infty$, and $(\lambda_t^{\PMEB(n)})_{t=1}^n$ --- but have not provided intuition for why these are sensible choices. Next, we discuss guiding principles for deriving predictable plug-ins.

\subsection{Guiding principles for deriving predictable plug-ins}
\label{section:howToMix}
Let us begin our discussion with the predictable plug-in Hoeffding process~\eqref{eq:hoeffdingSuperMG}
and the resulting CS in~Proposition~\ref{proposition:predmixHoeffding}, which has a half-width
\[
W_t = \frac{\log(2/\alpha) +  \sum_{i=1}^t\lambda_i^2/8 }{\sum_{i=1}^t \lambda_i}
\]
To ensure that $W_t \rightarrow 0$ as $t \rightarrow \infty$, it is clear that we want $\lambda_t \xrightarrow{\mathrm{a.s.}} 0$, but at what rate? As a sensible default, we recommend setting $\lambda_t \asymp 1/\sqrt{t \log t}$ so that $W_t = \widetilde O( \sqrt{\log t / t})$ which matches the width of the conjugate mixture Hoeffding CS \citep[Proposition 2]{howard_exponential_2018} (here $\widetilde O$ treats $O(\log \log t)$ factors as constants). See Table~\ref{tab:schedules} for a comparison between rates for $\lambda_t$ and their resulting CS widths.

\begin{table}
\caption{Below, we think of $\log x$ as $\log(x+1)$ to avoid trivialities. The claimed rates are easily checked by approximating the sums as integrals, and taking derivatives. For example, $\tfrac{d}{dx}\log \log x = 1/x \log x$, so the sum of $\sum_{i \leq t} 1/i \log i \asymp \log\log t$. It is worth remarking that for $t=10^{80}$, the number of atoms in the universe, $\log \log t \approx 5.2$, which is why we treat $\log \log t$ as a constant when expressing the rate for $W_t$. The iterated logarithm pattern in the the last two lines can be continued indefinitely.}
\label{tab:schedules}
\centering
 \begin{tabular}{|c c c c|} 
 \hline
 Strategy $(\lambda_i)_{i=1}^\infty$ & $\sum_{i=1}^t \lambda_i$ & $\sum_{i=1}^t \lambda_i^2$ & Width $W_t$ \\ [0.5ex] 
 \hline
 \hline
 $\asymp 1/i $ & $\asymp \log t$ & $\asymp 1$ & $1/\log t$ \\
 \hline
 $\asymp \sqrt{\log i/i}$ & $\asymp \sqrt{t \log t}$ & $\asymp \log^2 t$ & $\asymp \log^{3/2} t / \sqrt{t}$ \\
 \hline
 $\asymp 1/{\sqrt i}$ & $\asymp \sqrt{t}$ & $\asymp\log t$ & $\asymp \log t / \sqrt{t}$ \\ 
 \hline
 $\asymp 1/\sqrt{i \log i} $ & $\asymp \sqrt{t/\log t}$  & $\asymp \log\log t$ & $\asymp \sqrt{\log t/t}$ \\
 \hline
 $\asymp 1/\sqrt{i \log i \log\log i} $ & $\asymp \sqrt{t/\log t}$  & $\asymp \log\log\log t$ & $\asymp \sqrt{\log t/t}$ \\
 \hline
\end{tabular}
\end{table}


Now consider the predictable plug-in empirical Bernstein process~\eqref{eq:EBSuperMG}
and the resulting CS of Theorem~\ref{theorem:EBCS}, which has a half-width
\[
W_t = \frac{\log(2/\alpha) + \sum_{i=1}^t 4(X_i - \widehat \mu_{i-1})^2 \psi_E(\lambda_i) }{\sum_{i=1}^t \lambda_i}
\]
By two applications of L'H\^opital's rule, we have that 
\begin{equation} \small \frac{\psi_E(\lambda)}{\psi_H(\lambda)} \xrightarrow{\lambda \rightarrow 0^+} 1. \end{equation}
Performing some approximations for small $\lambda_i$ to help guide our choice of $(\lambda_t)_{t=1}^\infty$ (without compromising validity of resulting confidence sets) we have that
\begin{align}
    W_t 
    &\approx \frac{\log (2/\alpha) + \sum_{i=1}^t 4(X_i - \mu)^2 \lambda^2_i / 8}{\sum_{i=1}^t \lambda_i}.
\end{align}
Thus, in the special case of i.i.d. $X_i$ with variance $\sigma^2$, for large enough $t$,
\begin{align}
    \EE_P(W_t \mid \Fcal_{t-1}) &\lesssim \frac{\log(2/\alpha) + \sigma^2 \sum_{i=1}^t \lambda_i^2/2 }{\sum_{i=1}^t \lambda_i}.
\end{align}
If we were to set $\lambda_1 = \lambda_2 = \dots = \lambda^\star \in \mathbb R$ and minimize the above expression for a specific time $t^\star$, this amounts to minimizing 
\begin{equation} \small \frac{\log (2/\alpha) + \sigma^2 t^\star {\lambda^\star}^2/2 }{t^\star \lambda^\star}, \end{equation}
which is achieved by setting 
\begin{equation} \small \lambda^\star := \sqrt{\frac{2 \log (2/\alpha)}{\sigma^2 t^\star}}. \end{equation}
This is precisely why we suggested the predictable plug-in $(\lambda^\PM_t)_{t=1}^\infty$ given by~\eqref{eq:lambda_pm-eb}, 
where the additional $\log(t+1)$ is included in an attempt to enforce $W_t = \widetilde O (\sqrt{\log t / t})$.

The above calculations are only used as guiding principles to sharpen the confidence sets, but \emph{all} such schemes retain the validity guarantee. As long as $(\lambda_t)_{t=1}^\infty$ is $[0, 1)$-valued and predictable, we have that $(M_t^E(\mu))_{t=0}^\infty$ is a test supermartingale for $\Pcal^\mu$ which can be used in Theorem~\ref{theorem:4step} to obtain different valid CSs for $\mu$.

Foreshadowing our attempt to generalize this procedure in the next section, notice that the exponential function was used throughout to ensure nonnegativity, but that any other test supermartingale would have sufficed. In fact, if a martingale is used in place of a supermartingale, then Ville's inequality is tighter. 

Next, we present a test \textit{martingale}, removing a source of looseness in the confidence sets derived thus far. We discuss its betting interpretation, provide other guiding principles for setting $\lambda_i$ (equivalently, for betting), which will involve attempting to maximize the expected log-wealth in the betting game.


\section{The capital process, betting, and martingales}
\label{section:capitalProcess}
In Section~\ref{section:warmup}, we generalized the Cramer-Chernoff method to derive predictable plug-in exponential supermartingales and used this result to obtain tight empirical Bernstein CSs and CIs. In this section, we consider an alternative process which can be interpreted as the wealth accumulated from a series of bets in a game. This process is a central object of study in the game-theoretic probability literature where it is referred to as the \textit{capital process} \citep{shafer_probability_2005}. We discuss its connections to the purely statistical goal of constructing CSs and CIs and demonstrate how these sets improve on Cramer-Chernoff approaches, including the empirical Bernstein confidence sets of the previous section.

Consider the same setup as in Section~\ref{section:warmup}: we observe an infinite sequence of conditionally mean-$\mu$ random variables, $(X_t)_{t=1}^\infty \sim P$ from some distribution $P \in \Pcal^\mu$. Define the \textit{capital process} $\Kcal_t(m)$ for any $m \in [0, 1]$,
\begin{equation}
\small
\label{eq:capitalProcess}
\Kcal_t(m) := \prod_{i=1}^t (1 + \lambda_i(m)\cdot(X_i - m)),
\end{equation}
with $\Kcal_0(m) := 1$ and where $(\lambda_t(m))_{t=1}^\infty$ is a $(-1/(1-m), 1/m)$-valued predictable sequence, and thus $\lambda_t(m)$ can depend on $X_1^{t-1}$. Note that for each $t \geq 1$, we have $X_t \in [0, 1]$, $m \in [0, 1]$ and $\lambda_t(m) \in (-1/(1-m), 1/m)$. Here and below, $1/m$ should be interpreted as $\infty$ when $m=0$ and similarly for $1/(1-m)$ and $m=1$, respectively. Importantly, $(1 + \lambda_t(m)\cdot(X_t - m)) \in [0, \infty)$, and thus $\Kcal_t(m) \geq 0$ for all $t \geq 1$. Following similar techniques to the previous section, the reader may easily check that $\Kcal_t(\mu)$ is a test martingale. Moreover, we have the stronger result summarized in the following central proposition.
\begin{proposition}
\label{proposition:betting}
    Suppose a draw from some distribution $P$ yields a sequence $X_1, X_2, \dots$ of $[0, 1]$-valued random variables, and let $\mu \in [0,1]$ be a constant. The following four statements imply each other:
    \begin{enumerate}[(a)]
        \item $\EE_P\left (X_t \mid \Fcal_{t-1} \right) = \mu$ for all $t \in \NN$, where $\Fcal_{t-1} = \sigma(X_1, \dots ,X_{t-1})$.
        \item There exists a constant $\lambda \in \RR \backslash \{0\}$ for which $(\Kcal_t(\mu))_{t=0}^\infty$ is a strictly positive test martingale for $P$.
        \item For every fixed $\lambda \in (-\tfrac1{1-\mu},\tfrac1\mu )$, $(\Kcal_t(\mu))_{t=0}^\infty$ is a test martingale for $P$.
        \item For every $(-\tfrac1{1-\mu},\tfrac1\mu )$-valued predictable sequence $(\lambda_t)_{t=1}^\infty$, $(\Kcal_t(\mu))_{t=0}^\infty$ is a test martingale for $P$.
    \end{enumerate}
    Further, the intervals $(-\tfrac1{1-\mu},\tfrac1\mu)$ mentioned above can be replaced by any subinterval containing at least one nonzero value, like $[-1, 1]$ or $(-0.5, 0.5)$. 
    Finally, \underline{every} test martingale for $\Pcal^\mu$ is of the form $(\Kcal_t(\mu))$ for some predictable sequence $(\lambda_t)$.
\end{proposition}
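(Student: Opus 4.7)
My plan is to establish the cycle $(a)\Rightarrow(d)\Rightarrow(c)\Rightarrow(b)\Rightarrow(a)$ and then tackle the ``finally'' representation clause separately, since that is the only step with real content beyond algebraic bookkeeping.

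For $(a)\Rightarrow(d)$, since $\Kcal_{t-1}(\mu)$ and $\lambda_t$ are both $\Fcal_{t-1}$-measurable, direct computation gives $\EE_P[\Kcal_t(\mu)\mid\Fcal_{t-1}] = \Kcal_{t-1}(\mu)\bigl(1 + \lambda_t\EE_P[X_t-\mu\mid\Fcal_{t-1}]\bigr) = \Kcal_{t-1}(\mu)$; nonnegativity of each factor comes from checking the extremes $X_t\in\{0,1\}$, which is precisely what the bound $\lambda_t\in[-1/(1-\mu),1/\mu]$ enforces. The step $(d)\Rightarrow(c)$ is immediate because constant sequences are predictable. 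For $(c)\Rightarrow(b)$, picking any $\lambda$ strictly in the interior of $[-1/(1-\mu),1/\mu]$ (with obvious modifications if $\mu\in\{0,1\}$) makes every factor $1+\lambda(X_t-\mu)$ strictly positive, upgrading the nonnegative martingale of (c) to a strictly positive one. For $(b)\Rightarrow(a)$, the martingale identity expands to $\Kcal_{t-1}(\mu)\bigl(1+\lambda\EE_P[X_t-\mu\mid\Fcal_{t-1}]\bigr) = \Kcal_{t-1}(\mu)$, and strict positivity lets us divide, yielding $\EE_P[X_t\mid\Fcal_{t-1}]=\mu$ since $\lambda\neq 0$. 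The subinterval extension is automatic: shrinking the range of $\lambda$ only weakens (c) and (d), while (b) just needs one nonzero $\lambda$ in the allowed set, and (a) is an unconditional statement about $P$.

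The hard part is the final clause: every nonnegative $\Pcal^\mu$-martingale $(M_t)$ with $M_0=1$ must admit the form $M_t = \Kcal_t(\mu)$ for some predictable $(\lambda_t)$. My plan is a martingale-difference argument that exploits how rich $\Pcal^\mu$ is. Since $M_t$ is $\Fcal_t$-measurable, write $M_t - M_{t-1} = g(X_t)$ where $g$ depends on $(X_1^{t-1})$. The $\Pcal^\mu$-martingale property forces $\int g(x)\,dQ(x) = 0$ for \emph{every} distribution $Q$ on $[0,1]$ with mean $\mu$, because one can always construct some $P\in\Pcal^\mu$ whose conditional law of $X_t$ given $X_1^{t-1}$ is exactly $Q$ (taking later coordinates to be point masses at $\mu$, say). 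Plugging in two-point distributions $Q = p\delta_a + (1-p)\delta_b$ calibrated to have mean $\mu$ forces $g(a)/(a-\mu) = g(b)/(b-\mu)$ for all $a,b\in[0,1]\setminus\{\mu\}$, so $g(x) = c\,(x-\mu)$ for an $\Fcal_{t-1}$-measurable scalar $c$. On the event $\{M_{t-1}>0\}$ I set $\lambda_t := c/M_{t-1}$, which gives $M_t = M_{t-1}\bigl(1 + \lambda_t(X_t-\mu)\bigr)$; on $\{M_{t-1}=0\}$, nonnegativity together with $\EE[M_t\mid\Fcal_{t-1}]=0$ forces $M_t\equiv 0$, so $\lambda_t$ may be declared arbitrarily there. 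Requiring $M_t\geq 0$ at $X_t\in\{0,1\}$ automatically enforces $\lambda_t\in[-1/(1-\mu),1/\mu]$, and induction starting from $M_0 = \Kcal_0(\mu) = 1$ completes the representation.
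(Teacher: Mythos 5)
Your proposal is correct and follows essentially the same route as the paper: both establish the same cycle of implications (the paper writes it starting from $(d)$ rather than $(a)$, but the arrows are identical), and both prove the representation clause by observing that the per-step increment of a $\Pcal^\mu$-martingale must be a linear function of $X_t - \mu$ by virtue of $\Pcal^\mu$ being rich enough. The only meaningful difference is that you work with the additive increment $M_t - M_{t-1}$ and then normalize, whereas the paper works directly with the multiplicative ratio $M_t/M_{t-1}$, and your explicit two-point-distribution calculation (showing $g(a)/(a-\mu)=g(b)/(b-\mu)$) is a nice concrete justification of the linearity claim that the paper merely asserts with the remark that ``any nonlinear function of $X_t$ would not have mean zero under every distribution having mean $\mu$.''
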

The proof can be found in Section~\ref{proof:betting}. While the subsequent theorems will primarily make use of $(a) \implies (d)$, the above proposition establishes a core fact: the assumption of the (conditional) means being identically $\mu$ is an \emph{equivalent restatement} of our capital process being a test martingale. Thus, test martingales are not simply ``technical tools'' to deal with means of bounded random variables, they are fundamentally at the very heart of the problem definition itself.

Proposition~\ref{proposition:betting} can be generalized to another remarkable, yet simple, result: for any set of distributions $\Scal$, \emph{every} test martingale for $\Scal$ has the same form.

\begin{proposition}[Universal representation]
For any arbitrary set of (possibly unbounded) distributions $\Scal$, $(M_t)$ is a test martingale for $\Scal$ if and only if $M_t = \prod_{i=1}^t (1 + \lambda_i Z_i)$ for some $Z_i \geq -1$ such that $\EE_S[Z_i | \Fcal_{i-1}] = 0$ for every $S \in \Scal$, and some predictable $\lambda_i$ such that $\lambda_iZ_i\geq -1$. The same claim also holds for test supermartingales for $\Scal$, with the aforementioned ``$=0$'' replaced by ``$\leq 0$''. 
\label{prop:universal}
\end{proposition}
The proof can be found in Section~\ref{proof:universal}.
The above proposition immediately makes this paper's techniques actionable for a wide class of nonparametric testing and estimation problems. We give an example relating to quantiles later. 

\subsection{Connections to betting} 
\label{subsec:connections-to-betting}
It is worth pausing to clarify how the capital process $\Kcal_t(m)$ and Proposition~\ref{proposition:betting} can be viewed in terms of betting. 
We imagine that nature implicitly posits a hypothesis $H_0^m$ --- which we treat as a game providing us a chance to make money if the hypothesis is wrong, by repeatedly betting some of our capital against $H_0^m$. We start the game with a capital of 1 (i.e. $\Kcal_0(m) := 1$), and design a bet of $b_t := s_t |\lambda^m_t|$ at each step, where $s_t \in \{-1, 1\}$. Setting $s_t := 1$ indicates that we believe that $\mu > m$ while $s_t := -1$ indicates the opposite. $|\lambda_t^m|$ indicates the amount of our capital that we are willing to put at stake at time $t$: setting $\lambda_t^m = 0$ results in neither losing nor gaining any capital regardless of the outcome, while setting $\lambda_t^m \in \{ -1/(1-m), 1/m \}$ means that we are willing to risk all of our capital on the next outcome. 

However, if $H^m_0$ is true (i.e. $m = \mu$), then by Proposition~\ref{proposition:betting}, our capital process is a martingale. In betting terms, no matter how clever a betting strategy  $(\lambda_t^m)_{t=1}^\infty$ we devise, we cannot expect to make (or lose) money at each step. If on the other hand, $H^m_0$ is false, then a clever betting strategy will make us a lot of money. In statistical terms, when our capital exceeds $1/\alpha$, we can confidently reject the hypothesis $H^m_0$ since if it were true (and the game were fair) then by Ville's inequality \citep{ville_etude_1939}, the a priori probability of this \textit{ever} occurring is at most $\alpha$. We imagine simultaneously playing this game with $H_0^{m'}$ for each $m' \in [0, 1]$. At any time $t$, the games $m' \in [0, 1]$ for which our capital is small ($<1/\alpha$) form a CS.

Both the Cramer-Chernoff processes of Section~\ref{section:warmup} and $\Kcal_t(m)$ are nonnegative and tend to increase when $\mu > m$. However, only $\Kcal_t(m)$ is a \textit{test martingale} when $m = \mu$; the others are test supermartingales. A test martingale is the wealth accumulated in a ``fair game'' where our capital stays constant in expectation, while a test supermartingale is the wealth accumulated in a game where our capital is expected to decrease (not strictly). Larger values of capital correspond to rejecting $H_0^m$ more readily. Therefore, test supermartingales tend to yield conservative tests compared to their martingale counterparts.

More generally, \emph{every} nonnegative supermartingale can be regarded as the wealth process of a gambler playing a game with odds that are fair or stacked against them. In other words, there is a one-to-one correspondence between wealths of hypothetical gamblers and nonnegative supermartingales. Taking this perspective, every statement involving nonnegative supermartingales (and thus likelihood ratios) are statements about betting, and vice versa. Mixture methods that combine nonnegative supermartingales are simply strategies to hedge across various instruments available to the gambler. Thus, the gambling analogy can be entirely dropped, and our results would find themselves comfortably nestled in the rich literature on martingale methods for concentration inequalities, but we mention the betting analogy for intuition so that the mathematics are animated and easier to absorb.

Ville introduced martingales into modern mathematical probability theory, and centered them around their betting interpretation. Since then, ideas from betting have appeared in various fields, including probability theory, statistical testing and estimation, information theory, and online learning theory. While our paper focuses on the utility of betting in some statistical inference tasks, Section~\ref{section:history} provides a brief overview of the use of betting in other mathematical disciplines.

\subsection{Connections to likelihood ratios}

As alluded to in the previous subsection, useful intuition is provided via the connection to likelihood ratios. $\Kcal_t(m)$ is a ``composite'' test martingale for $\Pcal^m$,  meaning that it is a nonnegative martingale starting at one for every $P \in \Pcal^m$ (recall that $P$ is a distribution over infinite sequences of observations with conditional mean $m$). 

If we were dealing with a single distribution such as $Q^\infty$, meaning a product distribution where every observation is drawn iid from $Q$, then one may pick any alternative $Q'$ that is absolutely continuous with respect to $Q$, to observe that the likelihood ratio $\prod_{i=1}^t Q'(X_i) / Q(X_i)$ is a test martingale for $Q^\infty$. 

However, since $\Pcal^m$ is highly composite and nonparametric and is not even dominated by a single measure (as it contains atomic measures, continuous measures, and all their mixtures), it is unclear how one can even begin to write down a likelihood ratio. Nevertheless, \citet[Proposition 4]{ramdas2020admissible} show that if $(M_t)$ is a composite test martingale for any $\mathcal S$, then for every distribution $Q \in \mathcal S$, $M_t$ equals the likelihood ratio of some $Q'$ against $Q$ (where $Q'$ depends on $Q$). 

Thus, not only is every likelihood ratio a test martingale, but every (composite) test martingale can also be represented as a likelihood ratio.
Hence, in a formal sense, test martingales are nonparametric composite generalizations of likelihood ratios, which are at the very heart of statistical inference. When this observation is combined with Proposition~\ref{proposition:betting}, it should be no surprise any longer that the capital process $\Kcal_t(m)$ (even devoid of any betting interpretation) is fundamental to the problem at hand. In Section~\ref{section:EL} we also observe connections to the empirical likelihood of \cite{owen2001empirical} and  the dual likelihood of \citep{mykland1995dual}. 

\subsection{Adaptive, constrained adversaries}
Despite the analogies to betting, the game described so far appears to be purely stochastic in the sense that nature simply commits to a distribution $P \in \Pcal^\mu$ for some unknown $\mu \in [0,1]$ and presents us observations from $P$. However, Proposition~\ref{proposition:betting} can be extended to a more adversarial setup, but with a constrained adversary.

To elaborate, recall the difference between $\Qcal$ and $\Pcal$ from the start of Section~\ref{sec:setup} and consider a game with three players: an adversary, nature, and the statistician. First, the adversary commits to a $\mu \in [0,1]$. Then, the game proceeds in rounds. At the start of round $t$, the statistician publicly discloses the bets for every $m$, which could depend on $X_1,\dots,X_{t-1}$. The adversary picks a distribution $Q_t \in \Qcal^\mu$, which could depend on $X_1,\dots,X_{t-1}$ and the statistician's disclosed bets, and hands $Q_t$ to nature. Nature simply acts like an arbitrator, first verifying that the adversary chose a $Q_t$ with mean $\mu$, and then draws $X_t \sim Q_t$ and presents $X_t$ to the statistician. 

In this fashion, the adversary does not need to pick $\mu$ and $P \in \Pcal^\mu$ at the start of the interaction, which is the usual stochastic setup, but can instead build the distribution $P$ in a data-dependent fashion over time. In other words, the adversary does not commit to a distribution $P$, but instead to a \emph{rule for building} $P$ from the data. Of course, they do not need to disclose this rule, or even be able express what this rule would do on any other hypothetical outcomes other than the one observed. The results in this paper, which build on the central Proposition~\ref{proposition:betting}, continue to hold in this more general interaction model.

A geometric reason why we can move from the stochastic model first described to the above (constrained) adversarial model, is that the above distribution $P$ lies in the ``fork convex hull'' of $\Pcal^\mu$. Fork-convexity is a sequential analogue of convexity~\citep{ramdas2021exch}. Informally, the fork-convex hull of a set of distributions over sequences is the set of predictable plug-ins of these distributions, and is much larger than their convex hull (mixtures). If a process is a nonnegative martingale under every distribution in a set, then it is also a nonnegative martingale under every distribution in the fork convex hull of that set. No results about fork convexity are used anywhere in this paper, and we only mention it for the mathematically curious.

\subsection{The hedged capital process}
\label{section:hedgedCapitalProcess}
We now return to the purely statistical problem of using the capital process $\Kcal_t(m)$ to construct time-uniform CSs and fixed-time CIs. We might be tempted to use $\Kcal_t(\mu)$ as the nonnegative martingale in Theorem~\ref{theorem:4step} to conclude that 
\( \BI_t := \left \{ m \in [0, 1] : \Kcal_t(m) < 1/\alpha \right \} \text{forms a $(1-\alpha)$-CS for $\mu$.} \)
Unlike the empirical Bernstein CS of Section~\ref{section:warmup}, $\BI_t$ cannot be computed in closed-form. Instead, we theoretically need to compute the family of processes $\{\Kcal_t(m)\}_{m \in [0, 1]}$ and include those $m\in[0, 1]$ for which $\Kcal_t(m)$ remains below $1/\alpha$. This is not practical as the parameter space $[0, 1]$ is uncountably infinite. But if we know a priori that $\BI_t$ is guaranteed to produce an interval for each $t$, then it is straightforward to find a superset of $\BI_t$ by either performing a grid search on $(0, 1/g, 2/g, \dots, (g-1)/g, 1)$ for some large $g \in \mathbb N$, or by employing root-finding algorithms. This motivates the \textit{hedged capital process}, defined for any $\theta, m\in [0, 1]$ as
\begin{align}
    \Kcal_t^\pm(m) &:= \max \left \{ \theta\Kcal_t^+(m), (1-\theta) \Kcal_t^-(m) \right \} \label{eq:hedgedCapitalProcess}, \\
    \text{where}~~ \Kcal_t^+(m) &:=  \prod_{i=1}^t (1 + \lambda^+_i(m)\cdot(X_i - m)), \nonumber\\
    \text{and}~~ \Kcal_t^-(m) &:= \prod_{i=1}^t (1 - \lambda^-_i(m)\cdot(X_i - m)),  \nonumber
\end{align} 
and $(\lambda^+_t(m))_{t=1}^\infty$ and $(\lambda^-_t(m))_{t=1}^\infty$ are predictable sequences of $[0, \tfrac1m)$- and $[0, \tfrac1{1-m})$-valued random variables, respectively. 

$\Kcal_t^\pm(m)$ can be viewed from the betting perspective as dividing one's capital into proportions of $\theta$ and $(1-\theta)$ and making two series of simultaneous bets, positing that $\mu \geq m$, and $\mu < m$, respectively which accumulate capital in $\Kcal_t^+(m)$ and $\Kcal_t^-(m)$. If $\mu \neq m$, then we expect that one of these strategies will perform poorly, while we expect the other to make money in the long term. If $\mu = m$, then we expect neither strategy to make money. The maximum of these processes is upper-bounded by their convex combination, 
\[
\Mcal_t^\pm := \theta \Kcal_t^+ + (1-\theta)\Kcal_t^-.
\] 
Both $\Kcal_t^\pm$ and $\Mcal_t^\pm$ can be used for Step (b) of Theorem~\ref{theorem:4step} to yield a CS. Empirically, both yield intervals, but only the former provably so.

\begin{theorem}[Hedged capital CS {\textbf{[Hedged]}}]
\label{theorem:hedgedCS}
Suppose $(X_t)_{t=1}^\infty \sim P$ for some $P \in \Pcal^\mu$. Let $(\tilde \lambda^+_t)_{t=1}^\infty$ and $(\tilde \lambda^-_t)_{t=1}^\infty$ be real-valued predictable sequences not depending on $m$, and for each $t \geq 1$ let 
\begin{equation} \small \lambda_t^+(m) := |\tilde \lambda_t^+| \land \frac{c}{m}, ~~~ \lambda_t^-(m) := |\tilde \lambda^-_t|\land \frac{c}{1-m},  \end{equation}
for some $c \in [0, 1)$ (some reasonable defaults being $c = 1/2$ or $3/4$).
Then \[ \BI_t^\pm := \left \{ m \in [0, 1] : \Kcal_t^\pm(m) < 1/\alpha \right \} ~~~ \text{forms a $(1-\alpha)$-CS for $\mu$,} \]
as does its running intersection $\bigcap_{i \leq t} \BI_i^\pm$. Further, $\BI_t^\pm$ is an interval for each $t \geq 1$. Finally, replacing $\Kcal_t^\pm(m)$ by $\Mcal_t^\pm(m)$ yields a  tighter $(1-\alpha)$-CS for $\mu$.
\end{theorem}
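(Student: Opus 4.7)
The plan is to verify each claim by appealing to Proposition~\ref{proposition:betting} and to the \hyperref[theorem:4step]{4-step procedure}, with the main nontrivial ingredient being the interval property.

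For the CS validity, I would first check that the specified bets are admissible: when $m = \mu$, the positive bet $\lambda^+_t(\mu) = |\dot\lambda^+_t| \wedge (c/\mu) \in [0, 1/\mu]$, and the ``negated'' bet $-\lambda^-_t(\mu) \in [-1/(1-\mu), 0]$ also lies in $[-1/(1-\mu), 1/\mu]$. Hence by $(a) \Rightarrow (d)$ of Proposition~\ref{proposition:betting}, both $(\Kcal_t^+(\mu))_t$ and $(\Kcal_t^-(\mu))_t$ are nonnegative martingales starting at one, and so is their convex combination $\Mcal_t^\pm(\mu) = \theta \Kcal_t^+(\mu) + (1-\theta)\Kcal_t^-(\mu)$. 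Since the max is dominated by the sum, $\Kcal_t^\pm(\mu) \leq \Mcal_t^\pm(\mu)$, so both processes can be fed into step (b) of Theorem~\ref{theorem:4step}, delivering a $(1-\alpha)$-CS in each case. The claim that the $\Mcal$-based CS is strictly tighter is immediate from that same domination: $\{m : \Mcal_t^\pm(m) < 1/\alpha\} \subseteq \{m : \Kcal_t^\pm(m) < 1/\alpha\}$. The running-intersection property is standard: if $\mu \in \BI_t^\pm$ for all $t$ simultaneously with probability $\geq 1-\alpha$, then also $\mu \in \bigcap_{i \leq t}\BI_i^\pm$ for all $t$ on the same event.

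The step I expect to be the main obstacle is showing that $\BI_t^\pm$ is an interval, since this requires structural information about the map $m \mapsto \Kcal_t^\pm(m)$. My plan is to prove that $m \mapsto \Kcal_t^+(m)$ is non-increasing and $m \mapsto \Kcal_t^-(m)$ is non-decreasing on $[0,1]$; then $\Kcal_t^\pm = \max\{\theta\Kcal_t^+, (1-\theta)\Kcal_t^-\}$ is quasi-convex, so its strict sublevel set is an interval. Because both processes are products of nonnegative factors (this itself must be verified using the truncation $c \leq 1$: in the regime $|\dot\lambda^+_t| \leq c/m$ the factor equals $1 + |\dot\lambda^+_t|(X_t-m)$ which is $\geq 1 - c \geq 0$, and in the regime $|\dot\lambda^+_t| > c/m$ the factor equals $1 - c + cX_t/m \geq 0$; analogously for $\Kcal_t^-$), it suffices to argue factor-wise monotonicity.

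For each positive-bet factor $f_t^+(m) = 1 + \lambda_t^+(m)(X_t - m)$, I would split into the two regimes of the minimum: if $|\dot\lambda^+_t| \leq c/m$ then $f_t^+(m) = 1 + |\dot\lambda^+_t|(X_t - m)$ is linear and decreasing in $m$; if $|\dot\lambda^+_t| > c/m$ then $f_t^+(m) = 1 - c + cX_t/m$ is also non-increasing in $m$ since $X_t \geq 0$. Continuity at the kink is automatic. Symmetrically, for $f_t^-(m) = 1 - \lambda_t^-(m)(X_t - m)$: in the unclipped regime it is linear and increasing in $m$, while in the clipped regime $f_t^-(m) = 1 - c\cdot (X_t - m)/(1-m) = 1 - c + c(1-X_t)/(1-m)$ is non-decreasing in $m$ since $X_t \leq 1$. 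A product of nonnegative monotone-in-the-same-direction factors preserves that monotonicity, yielding the desired behavior of $\Kcal_t^\pm(m)$ and closing the argument.
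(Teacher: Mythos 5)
Your proposal is correct and follows essentially the same approach as the paper's proof: both establish CS validity by bounding $\Kcal_t^\pm(\mu)$ above by the convex combination $\Mcal_t^\pm(\mu)$ (a nonnegative martingale by Proposition~\ref{proposition:betting}) and feeding it into Theorem~\ref{theorem:4step}, and both establish the interval property by showing $m \mapsto \Kcal_t^+(m)$ is nonnegative and nonincreasing, $m \mapsto \Kcal_t^-(m)$ nonnegative and nondecreasing, so that the max is quasiconvex with interval sublevel sets. Your factor-wise case analysis (clipped vs.\ unclipped regimes, continuity at the kink, nonnegativity via $c \leq 1$) is exactly what the paper's parenthetical ``consider the terms with and without truncation separately'' elides, so you have, if anything, filled in a small gap in the paper's exposition rather than taken a different route.
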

For reasons given in Section~\ref{section:predmix_goodBetting}, we recommend setting $\tilde \lambda_t^+ = \tilde \lambda_t^- = \lambda_t^\PMpm$ as
\begin{equation}
\label{eq:lambdaPM}
\small
    \lambda_t^\PMpm := \sqrt{\frac{2 \log (2/\alpha)}{\widehat \sigma_{t-1}^2 t \log (t+1) }}, ~~~ \widehat \sigma_{t}^2 := \frac{1/4 + \sum_{i=1}^t (X_i - \widehat \mu_i)^2}{t + 1}, ~~\text{and}~~ \widehat \mu_t := \frac{1/2 + \sum_{i=1}^t X_i}{t + 1},
\end{equation}
for each $t \geq 1$, and truncation level $c := 1/2$ or $3/4$; see Figure~\ref{fig:hedgedVsPM}. A reasonable point estimator for $\mu$ is $\argmin_{m \in [0,1]} \Kcal_t^\pm(m)$ or $\argmin_{m \in [0,1]} \Mcal_t^\pm(m)$ (see Figure~\ref{fig:conf-dist}).

\begin{figure}
 \centering \textbf{Time-uniform confidence sequences: high-variance, symmetric data}
    \includegraphics[width=0.9\textwidth]{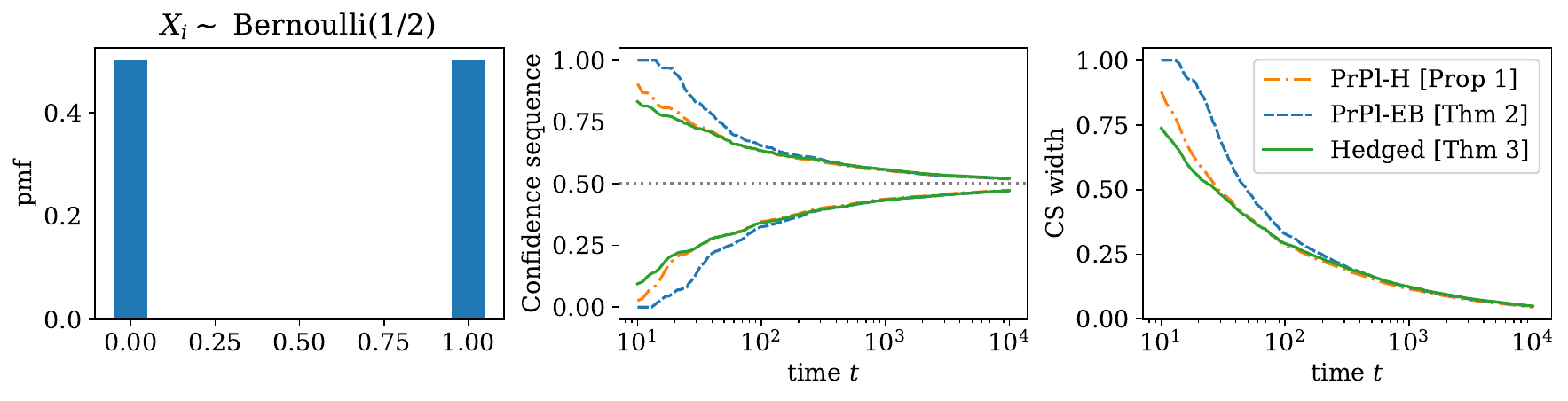}
 \centering \textbf{Time-uniform confidence sequences: low-variance, asymmetric data}
    \centering
    \includegraphics[width=0.9\textwidth]{fig/Time-uniform/Beta_10,_30__time-uniform.pdf}
 \caption{Predictable plug-in Hoeffding, empirical Bernstein, and hedged capital CSs under two distributional scenarios. Notice that the latter roughly matches the others in the Bernoulli$(1/2)$ case, but shines in the low-variance, asymmetric scenario.}
 \label{fig:hedgedVsPM}
\end{figure}

\begin{remark}
Since $\Kcal_t^\pm(m)\leq\Mcal_t^\pm(m)$, the latter confidence sequence is tighter. In the proof of Theorem~\ref{theorem:hedgedCS}, we use a property of the $\max$ function to establish quasiconvexity of $\Kcal_t^\pm(m)$, implying that $\BI_t^\pm$ is an interval. 
We find the difference in empirical performance negligible (Figure~\ref{fig:maxVsSum}).
For the interested reader, Section~\ref{section:aSOSsublevelSets} constructs a (pathological) CS that is almost surely not an interval.
\end{remark}

\begin{figure}[!htbp]
    \centering
    \includegraphics[width=\textwidth]{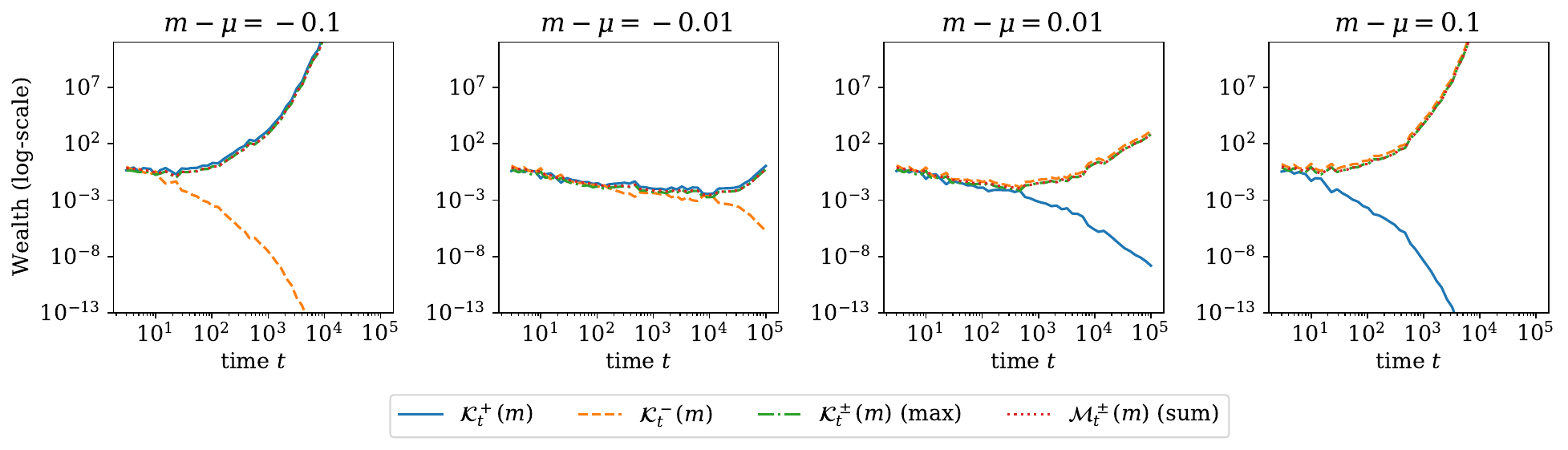}
    \caption{A comparison of capital processes $\Kcal_t^+(m)$, $\Kcal_t^-(m)$, the hedged capital process $\Kcal_t^\pm(m)$, and its upper-bounding nonnegative martingale, $\Mcal_t^\pm(m)$ under four alternatives (from left to right): $m \ll \mu$, $m < \mu$, $m > \mu$, $m \gg \mu$. When $m < \mu$, we see that $\Kcal_t^+(m)$ increases, while $\Kcal_t^-(m)$ approaches zero, but the opposite is true when $m > \mu$. Notice that not much is gained by taking a sum $\Mcal_t^\pm(m)$ rather than a maximum $\Kcal_t^\pm(m)$, since one of $\Kcal_t^+(m)$ and $\Kcal_t^-(m)$ vastly dominates the other, depending on whether $m > \mu$ or $m < \mu$.}
    \label{fig:maxVsSum}
\end{figure}

\begin{remark}
\label{remark:hedgedCI}
Theorem~\ref{theorem:hedgedCS} yields tight hedged CIs for a fixed sample size $n$. 
Recalling~\eqref{eq:lambdaPM}, we recommend using $\bigcap_{i\leq n}\BI_i^\pm$, and setting $\tilde \lambda_t^+ = \tilde \lambda_t^- = \tilde \lambda_t^\pm$ given by
\begin{equation} 
\label{eq:hedgedCI_lambda} \small \tilde \lambda_t^{\pm} := \sqrt{\frac{2 \log (2/\alpha)}{n \widehat \sigma_{t-1}^2}}.
\end{equation}
We refer to the resulting CI as the ``hedged capital confidence interval'' or \textbf{[Hedged-CI]} for short, and demonstrate its superiority to past work in Figure~\ref{fig:hedgedVsOtherCI}.
\end{remark}

Similar to the discussion after Remark~\ref{remark:EBCI}, if $X_1, \dots, X_n$ are independent, then one can permute the data many times and average the resulting capital processes to effectively derandomize the procedure. 

\begin{figure}
 \centering \textbf{Fixed-time confidence intervals: high-variance, symmetric data}
    \includegraphics[width=0.9\textwidth]{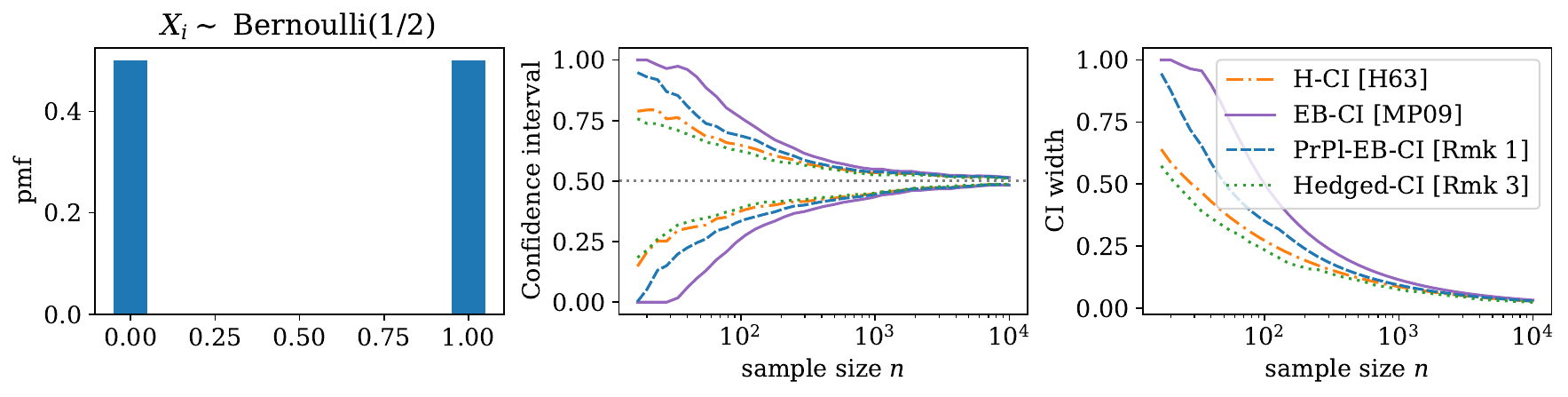}
 \centering \textbf{Fixed-time confidence intervals: low-variance, asymmetric data}
    \centering
    \includegraphics[width=0.9\textwidth]{fig/Fixed-time/Beta_10,_30__fixed-time.pdf}
 \caption{Hoeffding (H), empirical Bernstein (EB), and hedged capital CIs under two distributional scenarios. Similar to the time-uniform setting, the betting approach tends to outperform the other bounds, especially for low-variance, asymmetric data.}
 \label{fig:hedgedVsOtherCI}
\end{figure}
The proof of Theorem~\ref{theorem:hedgedCS} is in Section~\ref{proof:hedgedCS}. Unlike the empirical Bernstein-type CSs and CIs of Section~\ref{section:warmup}, those based on the hedged capital process are not necessarily symmetric. In fact, we empirically find through simulations that these CSs and CIs are able to adapt and benefit from this asymmetry (see Figures~\ref{fig:hedgedVsPM} and \ref{fig:hedgedVsOtherCI}). While it is not obvious from the definition of $\BI_t^\pm$, bets can be chosen such that hedged capital CSs and CIs converge at the optimal rates of $O(\sqrt{\log \log t / t})$ and $O(1/\sqrt{n})$, respectively (see Section~\ref{section:optimalRates}) and such that for sufficiently large $n$, hedged capital CIs almost surely dominate those based on Hoeffding's inequality (see Section~\ref{section:bettingBetterThanHoeffding}). However, the implications of time-uniform convergence rates are subtle, and optimal rates are not always desirable in practical applications (see \cite[Section 3.5]{howard_uniform_2019}). Nevertheless, we find that hedged capital CSs and CIs significantly outperform past works even for small sample sizes (see Section~\ref{section:simulations}). Some additional tools for visualizing CSs across $\alpha$ and $t$ are provided in Section~\ref{subsec:visual}.

In Section~\ref{section:howToBet}, we discuss some guiding principles for deriving powerful betting strategies, presenting the hedged capital CSs and CIs as special cases along with the following game-theoretic betting schemes:
\begin{itemize}\setlength\itemsep{-0.1em}
    \item Growth rate adaptive to the particular alternative (GRAPA),
    \item Approximate GRAPA (aGRAPA),
    \item Lower-bound on the wealth (LBOW),
    \item Online Newton step-$m$ (ONS-$m$),
    \item Diversified Kelly betting (dKelly),
    \item Confidence boundary bets (ConBo), and
    \item Sequentially rebalanced portfolio (SRP).
\end{itemize}
Each of these betting strategies have their respective benefits, whether computational, conceptual, or statistical which are discussed further in Section~\ref{section:howToBet}.


\section{Betting while sampling without replacement (WoR)}
\label{sec:WoR}


This section tackles a slightly different problem, that of sampling without replacement (WoR) from a finite set of real numbers in order to estimate its mean. Importantly, the $N$ numbers in the finite population $(x_1,\dots,x_N)$ are fixed and nonrandom. 
What is random is only the order of observation; the model for sampling uniformly at random without replacement (WoR) posits that at time $t \geq 1$,
\begin{equation}\label{eq:sampling-wo-replacement}
X_t \mid (X_1,\dots,X_{t-1}) \sim \text{Uniform} \left((x_1,\dots,x_N) \backslash (X_1,\dots,X_{t-1}) \right ).
\end{equation}
All probabilities are thus to be understood as solely arising from observing fixed entities in a random order, with no distributional assumptions being made on the finite population. 
We consider the same canonical filtration $\Fcal = (\mathcal F_t)_{t =0}^N$ as before. For $t \geq 1$, let $\Fcal_t := \sigma(X_1^t)$ be the sigma-field generated by $X_1, \dots, X_t$ and let $\Fcal_0$ be the empty sigma-field.
For succinctness, we use the notation $[a]:=\{1,\dots,a\}$.

For each $m \in [0,1]$, let $\Lcal^m:=\{ x_1^N \in [0,1]^N : \sum_{i=1}^N x_i / N = m \}$ be the set of all unordered lists of $N \geq 2$ real numbers in $[0,1]$ whose average is $m$. For instance, $\Lcal^0$ and $\Lcal^1$ are both singletons, but otherwise $\Lcal^m$ is uncountably infinite. Let $\Pcal^m$ be the set of all measures on $\Fcal_N$ that are formed as follows: pick an arbitrary element of $\Lcal^m$, apply a uniformly random permutation, and reveal the elements one by one. Thus, every element of $\Pcal^m$ is a uniform measure on the $N!$ permutations of some element in $\Lcal^m$, so there is a one-to-one mapping between $\Lcal^m$ and $\Pcal^m$.

 Define $\Pcal := \bigcup_{m} \Pcal^m$ and let $\mu$ represent the true unknown mean, meaning that the data is drawn from some $P \in \Pcal^\mu$. For every $m\in[0,1]$, we posit a composite null hypothesis $H^0_m: P \in \Pcal^m$, but clearly only one of these nulls is true. We will design betting strategies to test these nulls and thus find efficient confidence intervals or sequences for $\mu$.
It is easier to present the sequential case first, since that is arguably more natural for sampling WoR, and discuss the fixed-time case later.

\subsection{Existing (super)martingale-based confidence sequences or tests}

Several papers have considered estimating the mean of a finite set of nonrandom numbers when sampling WoR, often by constructing concentration inequalities \citep{hoeffding_probability_1963, serfling1974probability, bardenet2015concentration, waudby2020confidence}.
Notably, \citet{hoeffding_probability_1963} showed that the same bound for sampling with replacement \eqref{eq:hoeffding} can be used when sampling WoR. \citet{serfling1974probability} improved on this bound, which was then further refined by \citet{bardenet2015concentration}.
While test supermartingales appeared in some of the aforementioned works, \citet{waudby2020confidence} identified better test supermartingales which yield explicit Hoeffding- and empirical Bernstein-type concentration inequalities and CSs for sampling WoR that significantly improved on previous bounds. Consider their exponential Hoeffding-type supermartingale,
\begin{equation}
\small	\label{eqn:exponentialHoeffding}
			M_t^\HWoR := \exp \left \{ \sum_{i=1}^t \left [ \lambda_i \left ( X_i - \mu + \frac{1}{N-(i-1)}\sum_{j=1}^{i-1} (X_j - \mu) \right ) - \psi_H(\lambda_i)\right] \right \},
\end{equation}
and their exponential empirical Bernstein-type supermartingale,
\begin{equation}
\small
\label{eqn:exponentialEmpiricalBernstein}
	M_t^\EBWoR := \exp \left \{ \sum_{i=1}^t \left [ \lambda_i \left ( X_i - \mu + \frac{1}{N-(i-1)}\sum_{j=1}^{i-1} (X_j - \mu) \right ) - v_i\psi_E(\lambda_i)\right] \right \},
\end{equation}
where $(\lambda_t)_{t=1}^N$ is any predictable $\lambda$-sequence (real-valued for $M_t^\HWoR$, but $[0, 1)$-valued for $M_t^\EBWoR$), $v_i = 4(X_i - \widehat \mu_{i-1})^2$ as before, and $\psi_H(\cdot)$ and $\psi_E(\cdot)$ are defined as in Section~\ref{section:warmup}.
Defining $M_0^\HWoR \equiv M_0^\EBWoR := 1$, \cite{waudby2020confidence} prove that $(M_t^\HWoR)_{t=0}^N$ and $(M_t^\EBWoR)_{t=0}^N$ are test supermartingales with respect to $\Fcal$, and hence can be used in Step (b) of Theorem~\ref{theorem:4step}.

In recent work on election audits, \citet{stark2020sets} credits Harold Kaplan for proposing
\begin{equation}
\small
M_t^K := \int_0^1 \prod_{i=1}^t \left (1 + \gamma\left [ X_i \frac{1-(i-1)/N}{ \mu - \sum_{j=1}^{i-1}X_i / N} - 1 \right ] \right ) d\gamma.
\end{equation}
The ``Kaplan martingale'' $(M_t^K)_{t=0}^N$ was employed for election auditing, but it is a polynomial of degree $t$ and is computationally expensive for large $t$~\citep{stark2020sets}.

Next, we mimic the approach of Section~\ref{section:capitalProcess} to derive a capital process for sampling WoR. We then derive WoR analogues of the efficient betting strategies from Section~\ref{section:howToBet}.

\subsection{The capital process for sampling without replacement}
Define the predictable sequence $(\mu^\WoR_t)_{t\in [N]}$ where 
\begin{equation} \small \mu^\WoR_t := \mathbb E[X_t | \mathcal F_{t-1}] = \frac{N\mu - \sum_{i=1}^{t-1} X_i}{N-(t-1)}. \end{equation}
It is clear that $\mu_t^\WoR \in [0,1]$, since it is the mean of the unobserved elements of $\{x_i\}_{i \in [N]}$. $(\mu_t^\WoR)_{t\in[N]}$ is unobserved since $\mu$ is unknown, so it is helpful to define
\begin{equation}
\small
m_t^\WoR := \frac{N m - \sum_{i=1}^{t-1} X_i}{N-(t-1)}.
\end{equation}
Now, let $(\lambda_t(m))_{t=1}^N$ be a predictable sequence such that $\lambda_t(m)$ is $\left (-\tfrac1{1-m_t^\WoR}, \tfrac1{m_t^\WoR}\right )$-valued. Define the \textit{without-replacement capital process} $\Kcal_t^{\WoR}(m)$,
\begin{equation}
\small
\Kcal^\WoR_t(m) := \prod_{i=1}^t \left(1 + \lambda_i(m) \cdot (X_i - m_i^\WoR) \right)
\end{equation}
with $\Kcal^\WoR_0(m) := 1$. The following result is analogous to Proposition~\ref{proposition:betting}.
\begin{proposition}
\label{proposition:bettingWoR}
    Let $X_1^N$ be a WoR sample from $x_1^N\in [0, 1]^N$. The following two statements imply each other:
    \begin{enumerate}
        \item $\EE_P\left (X_t \mid \Fcal_{t-1} \right) = \mu_t^\WoR$ for each $t \in [N]$.
        \item For every predictable sequence with $\lambda_t(m) \in \left (-\tfrac{1}{(1-\mu_t^\WoR)},\tfrac{1}{\mu_t^\WoR}\right )$, $(\Kcal^\WoR_t(\mu))_{t=0}^\infty$ is a test martingale.
    \end{enumerate}
\end{proposition}

The other claims within Proposition~\ref{proposition:betting} also hold above with minor modification, but we do not mention them again for brevity.
Further, Proposition~\ref{prop:universal} technically covers WoR sampling as well.
We now present a ``hedged'' capital process and powerful betting schemes for sampling WoR, to construct a CS for $\mu = \frac{1}{N} \sum_{i=1}^N x_i$.

\subsection{Powerful betting schemes} 

Similar to Section~\ref{section:hedgedCapitalProcess}, define the hedged capital process for sampling WoR: 
\begin{align*}
\Kcal_t^{\WoR, \pm}(m) := \max \Bigg \{ & \theta \prod_{i=1}^t \left (1 + \lambda_i^+(m)\cdot (X_i - m_t^\WoR ) \right ), \\ 
&(1-\theta) \prod_{i=1}^t \left (1 - \lambda_i^-(m)\cdot (X_i - m_t^\WoR ) \right ) \Bigg \} 
\end{align*}
for some predictable $(\lambda_t^+(m))_{t=1}^N$ and $(\lambda_t^-(m))_{t=1}^N$ taking values in $[0, 1/m_t^\WoR]$ and $[0, 1/(1-m_t^\WoR)]$ at time $t$, respectively. Using $\left(\Kcal_t^{\WoR,\pm}(m)\right)_{t=0}^\infty$ as the process in Step (b) of Theorem~\ref{theorem:4step}, we obtain the CS summarized in the following theorem.
\begin{theorem}[WoR hedged capital CS {\textbf{[Hedged-WoR]}}]
\label{theorem:hedgedCSWoR}
Given a finite population $x_1^N \in [0, 1]^N$ with mean $\mu := \frac{1}{N} \sum_{i=1}^N x_i = \mu$, suppose that $X_1, X_2, \dots X_N$ are sampled WoR from $x_1^N$. 
Let $(\dot \lambda^+_t)_{t=1}^\infty$ and $(\dot \lambda^-_t)_{t=1}^\infty$ be real-valued predictable sequences not depending on $m$, and for each $t \geq 1$ let 
\[ \lambda_t^+(m) := |\dot \lambda_t^+| \land \frac{c}{m_t^\WoR}, ~~~ \lambda_t^-(m) := |\dot \lambda^-_t|\land \frac{c}{1-m_t^\WoR},  \]
for some $c \in [0, 1)$ (some reasonable defaults being $c = 1/2$ or $3/4$).
Then
\[ \BI_t^{\pm, \WoR} := \left \{ m \in [0, 1] : \Kcal_t^{\pm, \WoR}(m) < 1/\alpha \right \} ~~~ \text{forms a $(1-\alpha)$-CS for $\mu$,} \]
as does $\bigcap_{i \leq t} \BI_i^{\pm, \WoR}$. Furthermore, $\BI_t^{\pm, \WoR}$ is an interval for each $t \geq 1$.
\end{theorem}
The proof of Theorem~\ref{theorem:hedgedCSWoR} is in Section~\ref{proof:hedgedCSWoR}. We recommend setting $\dot \lambda_t^+ = \dot \lambda_t^- = \lambda_t^\PMpm$ as was done earlier in \eqref{eq:lambdaPM}; for each $t \geq 1$, and $c := 1/2$, let 
\[
\lambda_t^\PMpm := \sqrt{\frac{2 \log (2/\alpha)}{\widehat \sigma_{t-1}^2 t \log (t+1) }}, ~~ \widehat \sigma_{t}^2 := \frac{\tfrac14 + \sum_{i=1}^t (X_i - \widehat \mu_i)^2}{t + 1}, ~\text{and}~ \widehat \mu_t := \frac{\tfrac12 + \sum_{i=1}^t X_i}{t + 1},
\]
See Figure~\ref{fig:WoR} for a comparison to the best prior work.

\begin{figure}[!htbp]
\centering \textbf{WoR time-uniform confidence sequences: high-variance, symmetric data}
    \includegraphics[width=0.88\textwidth]{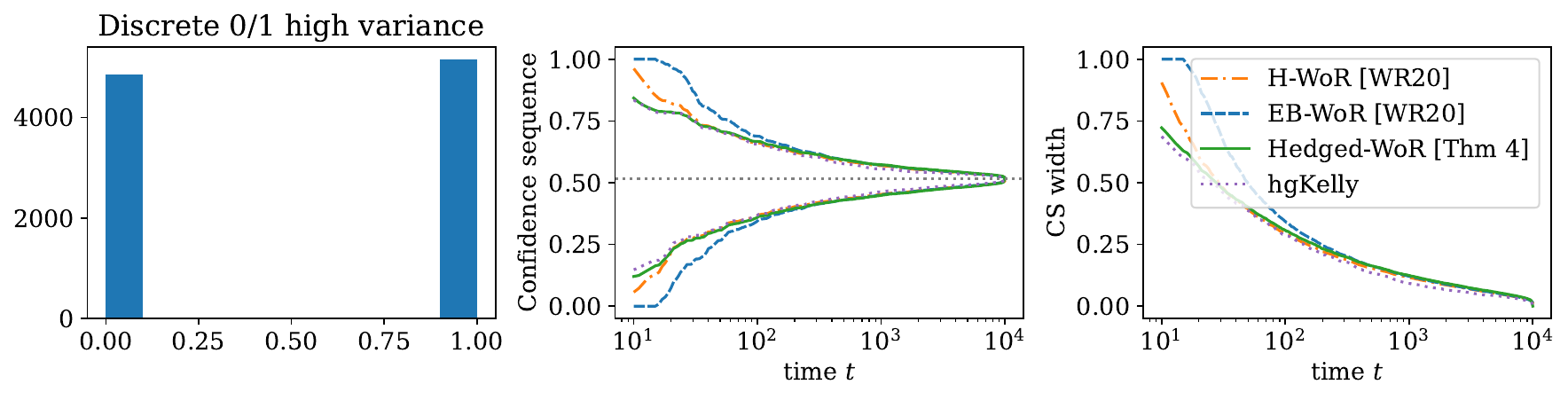}
    
\centering \textbf{WoR time-uniform confidence sequences: low-variance, asymmetric data}
    \centering
    \includegraphics[width=0.88\textwidth]{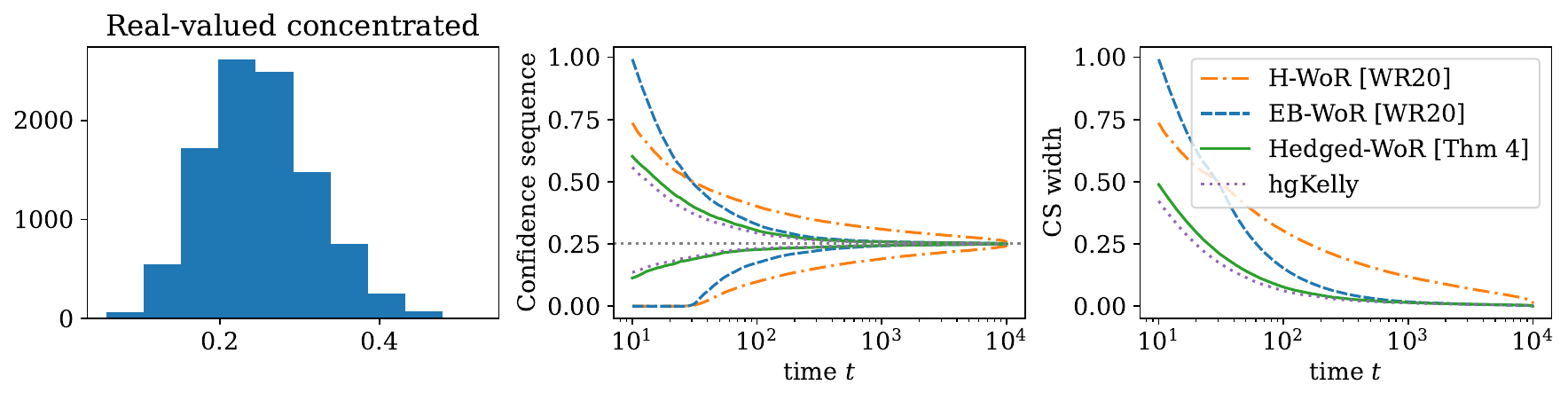}
    \caption{\small Without-replacement betting CSs versus the predictable plug-in supermartingale-based CSs \citep{waudby2020confidence}. Similar to the with-replacement case, the betting approach matches or vastly outperforms past state-of-the art methods.}
    \label{fig:WoR}
\end{figure}
\begin{remark}
\label{remark:hedgedCIWoR}
As before, we can use Theorem~\ref{theorem:hedgedCSWoR} to derive powerful CIs for the mean of a nonrandom set of bounded numbers given a fixed sample size $n \leq N$. 
We recommend using $\bigcap_{i\leq n}\BI_i^{\pm, \WoR}$, and setting $\dot \lambda_t^+ = \dot \lambda_t^- = \dot \lambda_t^\pm$ as in \eqref{eq:hedgedCI_lambda}:
\(
\small \dot \lambda_t^{\pm} := \sqrt{\frac{2 \log (2/\alpha)}{n \widehat \sigma_{t-1}^2}}.
\)
We refer to the resulting CI as
\textbf{[Hedged-WoR-CI]};
see Figure~\ref{fig:WoRCI}.
\end{remark}
\begin{figure}[!htbp]
\centering \textbf{WoR fixed-time confidence intervals: high-variance, symmetric data}
    \includegraphics[width=0.88\textwidth]{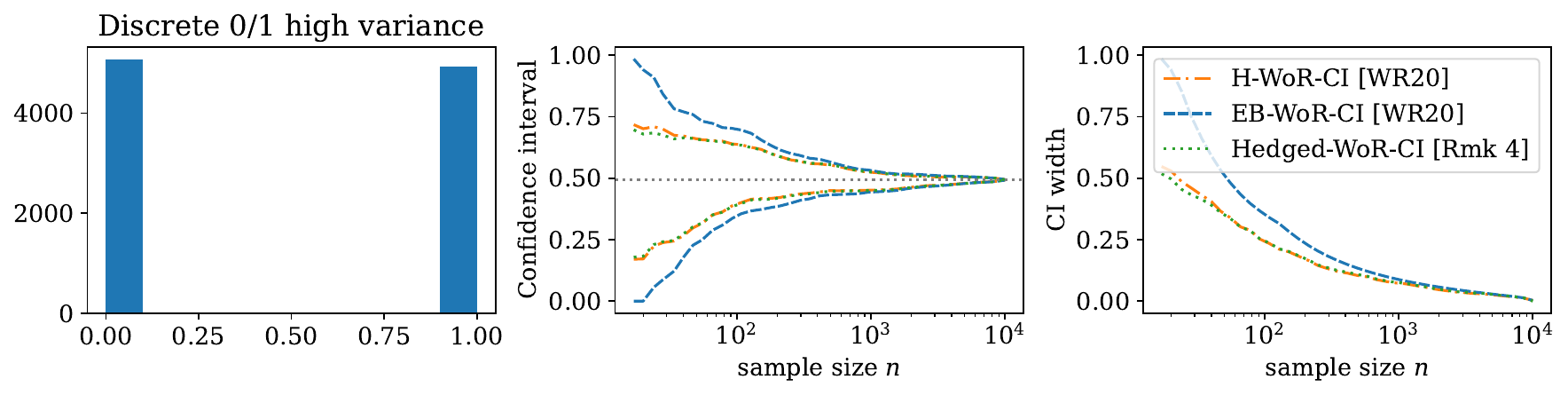}
 \hfill
 
\centering \textbf{WoR fixed-time confidence intervals: low-variance, asymmetric data}
    \centering
    \includegraphics[width=0.88\textwidth]{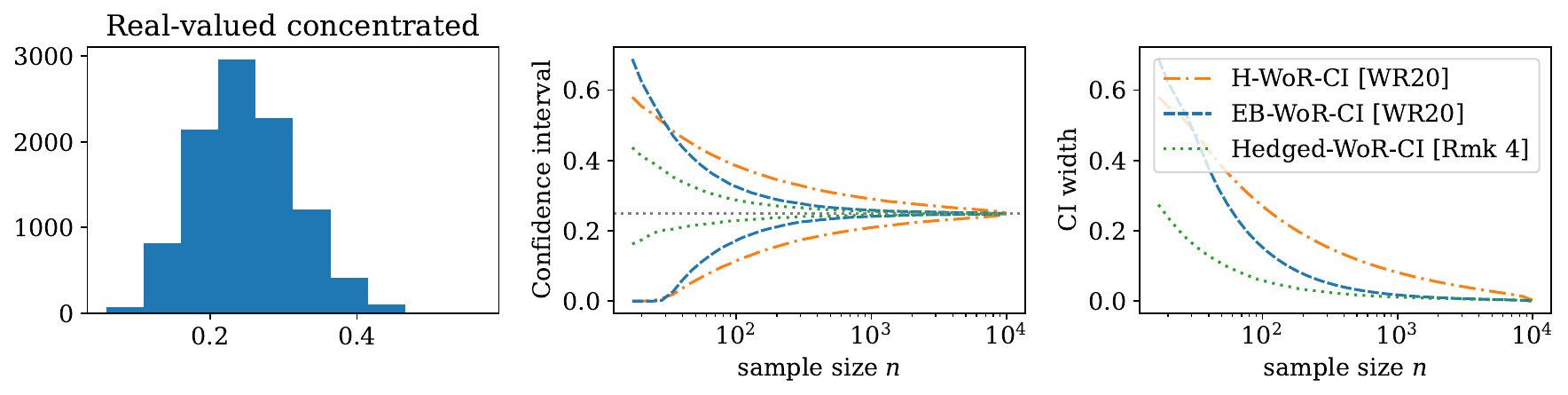}
    \caption{\small WoR analogue of the hedged capital CI versus the WoR Hoeffding- and empirical Bernstein-type CIs \citep{waudby2020confidence}. Similar to with-replacement, the betting approach has excellent performance.}
    \label{fig:WoRCI}
\end{figure}

Notice that constructing a WoR test martingale only relies on changing the fixed conditional mean $\mu$ to the time-varying conditional mean $\mu_t^\WoR := \frac{N\mu - \sum_{i=1}^{t-1} X_i }{N-t+1}$ and now designing $(-1/(1-\mu_t^\WoR), 1/\mu_t^\WoR)$-valued bets instead of $(-1/(1-\mu), 1/\mu)$-valued ones. In this way, it is possible to adapt any of the betting strategies in Section~\ref{section:howToBet} to sampling WoR, yielding a wide array of solutions to this estimation problem.

\subsection{Relationship to composite null testing}
This paper focuses primarily on estimation, but we end with a note that our CSs (or CIs) yield valid, sequential (or batch) tests for composite null hypotheses $H_0: \mu \in S$ for any $S \subset [0,1]$. Specifically, for any of our capital processes $\Kcal_t(m)$, 
\[
\mathfrak{p}_t :=  \sup_{m \in S} \frac1{\Kcal_t(m)}
\] 
is an ``anytime-valid p-value'' for $H_0$, as is $\widetilde{\mathfrak{p}}_t := \inf_{s \leq t} \mathfrak{p}_s$, meaning that 
\[
\sup_{P \in \bigcup_{m \in S}\Pcal^m} P(\mathfrak{p}_\tau \leq \alpha) \leq \alpha \text{ for arbitrary stopping times $\tau$.}
\]
Alternately, $\mathfrak{p}_t$ is also the smallest $\alpha$ for which our $(1-\alpha)$-CS does not intersect $S$.
Similarly, $\mathfrak{e}_t := \inf_{m \in S} \Kcal_t(m)$ is an ``e-process'' for $H_0$, meaning that
\[
\sup_{P \in \bigcup_{m \in S}\Pcal^m} \EE_P[\mathfrak{e}_\tau] \leq 1 \text{ for arbitrary stopping times $\tau$.}
\]
For more details on inference at arbitrary stopping times, we refer the reader to \citet{howard_exponential_2018,howard_uniform_2019,grunwald_safe_2019,ramdas2020admissible}.

\section{A brief selective history on betting and its mathematical applications}\label{section:history-short}

From a purely statistical perspective, this paper could be viewed as tackling the problem of deriving sharp confidence sets for means of bounded random variables. In this pursuit, we find that a technique with excellent empirical performance happens to have strong connections to the topics of betting and gambling. While we provide a more detailed discussion in Section~\ref{section:history}, here we briefly summarize some of the ways in which betting ideas have appeared in and shaped probability, statistical inference, information theory, and online learning, in the broad context of our paper.
\begin{itemize}
    \item \textbf{Probability}: The 1939 PhD thesis of~\cite{ville_etude_1939} brought betting and martingales to the forefront of modern probability theory, by giving actionable interpretations to Kolmogorov's newly developed measure-theoretic probability, and dealing a near-fatal blow to the theory of collectives by von Mises. Ville showed that for \emph{any} event $A$ of probability measure zero (like sequences violating the law of large numbers), he could design an explicit betting strategy that never bets more than it has, whose wealth (a test martingale) grows without to infinity if the event $A$ occurs. Ville worked with binary sequences, but his result holds more generally; see~\cite{shafer_probability_2005}. 
    
    One may view Ville's result as a theorem in measure-theoretic probability theory; what he effectively proved was: the event that a test (super)martingale exceeds $1/\alpha$ has probability at most $\alpha$ (Ville's inequality in this paper). This holds for any $\alpha \in [0,1]$, treating $1/0 \equiv +\infty$, with the $\alpha=0$ case being the most remarkable part. But Ville's result is also an axiomatic building block for \emph{game-theoretic probability} \citep{vovk1993logic,shafer_probability_2005,shafer2019game}. Many classical results in probability can been derived in completely game-theoretic terms~\citep{shafer_probability_2005,shafer2019game}. The capital processes used for deriving CSs are of the same form as those used to derive these foundational theorems of game-theoretic probability, despite the two goals being quite different. 
    
    \item \textbf{Statistical inference}: The famous book of \citet{wald_sequential_1945} was the first to thoroughly present and study sequential hypothesis testing. Despite not being presented in this way by Wald, we know in hindsight that the sequential probability ratio test (SPRT) is quite centrally based on the fact that the likelihood ratio is a nonnegative martingale. Two decades later, Robbins and colleagues built on Wald's sequential testing work in several ways, including to estimation via confidence sequences \citep{darling_confidence_1967,darling_inequalities_1967,darling_iterated_1967,robbins_iterated_1968,robbins_probability_1969,robbins_boundary_1970, robbins_class_1972, robbins_expected_1974,robbins_statistical_1970,lai_confidence_1976}. The recent work of \citet{howard_exponential_2018,howard_uniform_2019,ramdas2021exch,wasserman2020universal} extends the early work of Wald, Robbins and colleagues to a broader class of problems using exponential supermartingales and ``e-processes'', which can be seen as nonparametric, composite generalizations of the SPRT martingale.
    Connections between \emph{betting} and the works of Wald, Robbins et al., and Howard et al. are implicit in those works, but can now be seen in hindsight, and our paper makes these connections explicit.  
    \item \textbf{Information theory}: Working in the new field of information theory, \citet{kelly1956new} made direct connections to betting by showing that the capacity of a channel (itself fundamentally related to entropy and the Kullback-Leibler divergence) is given by the maximal rate of growth of wealth of a gambler in a simple game with iid Bernoulli$(p)$ observations and known $p$. \cite{breiman1961optimal} generalized Kelly's results significantly, and \citet{krichevsky1981performance} extended these results beyond the case of known $p$ using a mixture method. Thomas Cover's interest in these techniques spans several decades~\citep{cover1974universal,cover1984algorithm,cover1987log,bell1980competitive,bell1988game}, culminating  in his famous universal portfolio algorithm \citep{cover1991universal}. The results of Krichevsky-Trofimov and Cover are essentially regret inequalities, leading directly to the final subfield below.
    \item \textbf{Online learning}: The techniques of Krichevsky, Trofimov and Cover found extensive applications to \emph{sequential prediction with the logarithmic loss} \citep{cesa2006prediction}. Here, one derives \emph{regret inequalities} for the total loss accumulated when predicting the next observation from  a potentially adversarial sequence. This problem is fundamentally connected to online convex optimization, for which Orabona and colleagues use parameter-free betting algorithms to derive regret inequalities \citep{orabona2016coin,orabona2017training,jun2017improved,cutkosky2018black,jun2019parameter}.
\citet{rakhlin2017equivalence} articulated a deep connection between martingale concentration and deterministic regret inequalities, and \citet[Section 7.1]{jun2019parameter}  derive concentration bounds for the general setting of Banach space-valued observations with sub-exponential noise. 
\end{itemize}

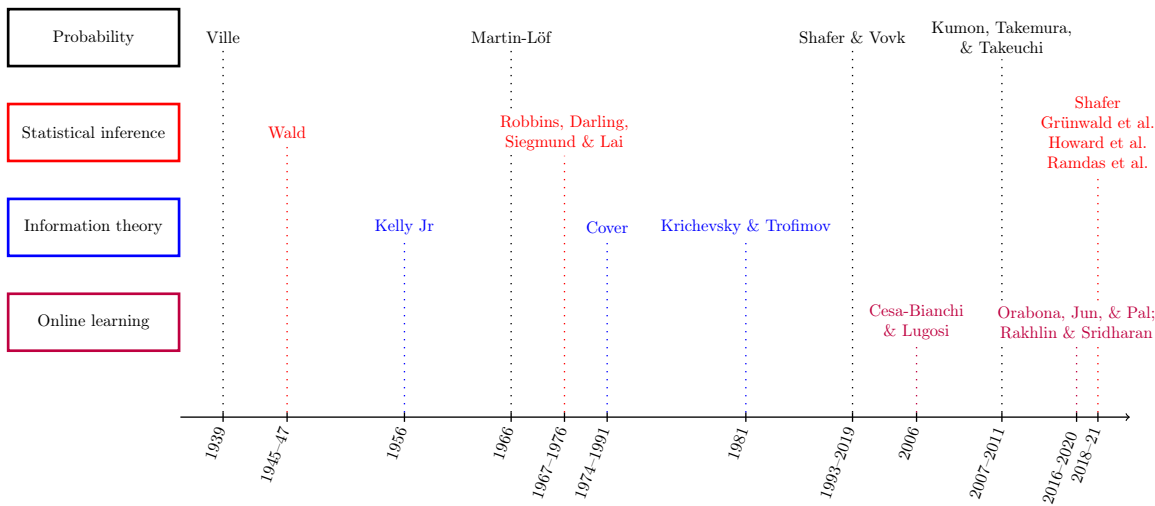
\begin{figure}[!htbp]
\resizebox{\textwidth}{!}{
\begin{tikzpicture}[]
    
    \newcommand\timelineStartX{0}
    \newcommand\timelineEndX{20}
    \coordinate (coord_beginning) at (\timelineStartX, 0) {} ;
    \coordinate (coord_now) at (\timelineEndX, 0) {} ;

    \draw[->, black, thick, text=black]
        (coord_beginning)--(coord_now);
    
    \newcommand\probY{8}
    \newcommand\statY{6}
    \newcommand\infoY{4}
    \newcommand\onlineY{2}
    \newcommand\probColour{black}
    \newcommand\statColour{red}
    \newcommand\infoColour{blue}
    \newcommand\onlineColour{purple}
    \newcommand{\ticksBottomY}{-0.1}
    \newcommand{\ticksTopY}{0.1}
    
    \newcommand{\literatureNodeWidth}{3.6cm}
    \newcommand{\literatureNodeHeight}{1.2cm}

    \node[fill=white, draw = \probColour, ultra thick, align = center, minimum height = \literatureNodeHeight, minimum width = \literatureNodeWidth, anchor=east]  (node_begin_OL)  at(0,\probY) {Probability} ;
    \node[fill=white, draw = \statColour, ultra thick, align = center, minimum height = \literatureNodeHeight,minimum width = \literatureNodeWidth, anchor=east]  (node_begin_OL)  at(0,\statY) {Statistical inference} ;
    \node[fill=white, draw = \infoColour, ultra thick, align = center, minimum height = \literatureNodeHeight,minimum width = \literatureNodeWidth, anchor=east]  (node_begin_OL)  at(0,\infoY) {Information theory} ;
    \node[fill=white, draw = \onlineColour, ultra thick, align = center, minimum height = \literatureNodeHeight, minimum width = \literatureNodeWidth, anchor=east]  (node_begin_OL)  at(0,\onlineY) {Online learning} ;
        
    \newcommand{\lowerYear}{1935}
    \newcommand{\upperYear}{2024}

    \foreach \yrForX\y\colour\auth\yrForDisplay in {
        1939/\probY/\probColour/Ville/1939,
        1966/\probY/\probColour/Martin-L\"of/1966,
        1998/\probY/\probColour/Shafer \& Vovk/1993--2019,
        2012/\probY/\probColour/Kumon{,} Takemura{,}\\\& Takeuchi/2007--2011,
        1945/\statY/\statColour/Wald/1945--47,
        1971/\statY/\statColour/Robbins{,} Darling{,}\\ Siegmund \& Lai/1967--1976,
        2021/\statY/\statColour/Shafer\\Gr\"unwald et al.\\Howard et al.\\Ramdas et al./2018--21,
        1956/\infoY/\infoColour/Kelly Jr/1956,
        1975/\infoY/\infoColour/Cover/1974--1991,
        1988/\infoY/\infoColour/Krichevsky \& Trofimov/1981,
        2004/\onlineY/\onlineColour/Cesa-Bianchi\\\& Lugosi/2006,
        2019/\onlineY/\onlineColour/Orabona{,} Jun{,} \& Pal;\\Rakhlin \& Sridharan/2016--2020
    }
    {
        \FPeval{\x}{(\yrForX - \lowerYear) / (\upperYear - \lowerYear) * (\timelineEndX - \timelineStartX)}
        \draw[-, black, thick, text=black]
            (\x, \ticksBottomY)--(\x, \ticksTopY);
        \node[align=center, anchor=east, rotate=70] at (\x, \ticksBottomY) {\yrForDisplay};
        \node[fill=white, align = center, anchor=center, text=\colour](authorNode) at (\x,\y) {\auth};
        \draw[loosely dotted, \colour, thick, text=black] (\x, \ticksTopY)--(authorNode) ;
    }

    

    \newcommand\villeX{1}

\end{tikzpicture}
}
\caption{A brief selective history of betting ideas appearing (often implicitly) in various literatures. As discussed further in Section~\ref{section:history}, these subfields have rarely cited each other, but ideas are now beginning to permeate. Several authors did not explicitly use the language of betting, and their inclusion above is due to reinterpreting their work in hindsight.} 
\label{fig:history}
\end{figure}

\section{Summary}

Nonparametric confidence sequences are particularly useful in sequential estimation because they enable valid inference at arbitrary stopping times, but they are underappreciated as powerful tools to provide accurate inference even at fixed times. Recent work~\citep{howard_exponential_2018,howard_uniform_2019} has developed several time-uniform generalizations of the Cramer-Chernoff technique utilizing ``line-crossing'' inequalities and using various variants of Robbins' method of mixtures (discrete mixtures, conjugate mixtures and stitching) to convert them to ``curve-crossing'' inequalities. 

This work adds new techniques to the toolkit: to complement the aforementioned mixture methods, we develop a ``predictable plug-in'' approach. When coupled with existing nonparametric supermartingales, it yields (for example) computationally efficient empirical-Bernstein confidence sequences. One of our major contributions is to thoroughly develop the theory and methodology for a new nonnegative martingale approach to estimating means of bounded random variables in both with- and without-replacement settings. These convincingly outperform all existing published work that we are aware of, for CIs and CSs, both with and without replacement.

Our methods are particularly easy to interpret in terms of evolving capital processes and sequential testing by betting~\citep{shafer2019language} but we go much further by developing powerful and efficient betting strategies that lead to state-of-the-art variance-adaptive confidence sets that are significantly tighter than past work in all considered settings. In particular, Shafer espouses \emph{complementary} benefits of such approaches, ranging from improved scientific communication, ties to historical advances in probability, and reproducibility via continued experimentation (also see~\cite{grunwald_safe_2019}), but our focus here has been on developing a new state of the art for a set of classical, fundamental problems. 

There appear to be nontrivial connections to online learning theory~\citep{kotlowski2010following,kumon2011sequential,orabona2017training,cutkosky2018black}, and to empirical  and dual likelihoods (see Section~\ref{section:EL} and an extended historical review of betting in Section~\ref{section:history}). The   reductions from regret inequalities to concentration bounds described in~\cite{rakhlin2017equivalence} and~\cite{jun2019parameter} are fascinating, but existing published bounds are loose in the constants and not competitive in practice compared to our direct approach. Exploring deeper connections may yield other confidence sequences or betting strategies.

It is clear to us, and hopefully to the reader as well, that the ideas behind this work (adaptive statistical inference by betting) form the tip of the iceberg---they lead to powerful, efficient, nonasymptotic, nonparametric inference and can be adapted to a range of other problems. As just one example, let $\Pcal^{p,q}$ represent the set of all continuous distributions such that the $p$-quantile of $X_t$, conditional on the past, is equal to $q$. This is also a nonparametric, convex set of distributions with no common reference measure. Nevertheless, for any predictable $(\lambda_i)$, it is easy to check that
\[
M_t = \prod_{i=1}^t (1 + \lambda_i (\mathbf{1}_{X_i \leq q} - p))
\]
is a test martingale for $\Pcal^{p,q}$. Setting $p=1/2$ and $q=0$, for example, we can sequentially test if the median of the underlying data distribution is the origin. The continuity assumption can be relaxed, and this test can be inverted to get a confidence sequence for any quantile. We do not pursue this idea further in the current paper because the recent (rather different) nonnegative martingale methods of~\citet{howard2019sequential} already provide a challenging benchmark for that problem. Typically, one test martingale-based method cannot  uniformly dominate another, and the large gains in this paper were made possible because all previous published approaches implicitly or explicitly employed test \emph{super}martingales, while we employ test martingales that are computationally simple to implement.

To conclude, we opine that ``game-theoretic statistical inference'' is in its nascency, and we expect much theoretical and practical progress in coming years. We hope the reader shares our excitement in this regard.


\paragraph{Acknowledgments.}
AR acknowledges funding from NSF DMS 1916320, an Adobe faculty research award and an NSF DMS (CAREER) 1945266. This work used the Extreme Science and Engineering Discovery Environment (XSEDE), which is supported by National Science Foundation grant number ACI-1548562. Specifically, it used the Bridges system, which is supported by NSF award number ACI-1445606, at the Pittsburgh Supercomputing Center (PSC) \citep{Nystrom:2015:BUF:2792745.2792775}.  The authors thank Harrie Hendriks, Philip Stark, Francesco Orabona, Kwang-Sung Jun, Nikos Karampatziakis and Arun Kuchibhotla for discussions on an early preprint, as well as Glenn Shafer, Vladimir Vovk and Peter Gr\"unwald for broader discussions.

\bibliographystyle{plainnat}
\bibliography{main.bib}

\newpage
\appendix

\section{Proofs of main results}
\label{sec:app-proofs}
We first introduce a lemma which will aid in the proofs to follow. 



\begin{lemma}[Predictable plug-in Chernoff supermartingales]
\label{lemma:predmix}
Suppose that $X_1,X_2,\dots \sim P$, and for some $\mu,v_t$ and $\psi(\lambda)$, we have that for any $\lambda \in \Lambda \subseteq \mathbb R$,
\begin{equation}
\label{eq:superMG}
    \EE_P\left [ \exp(\lambda(X_t - \mu) - v_t\psi(\lambda)) \mid \Fcal_{t-1} \right ] \leq 1 \text{ ~ for each $t \geq 1$ }.
\end{equation}
Then, for any  $\Lambda$-valued sequence $(\lambda_t)_{t=1}^\infty$ that is predictable with respect to $\Fcal$,
\[ M_t^\psi(\mu) := \prod_{i=1}^t \exp \left ( \lambda_i(X_i - \mu) - v_i\psi(\lambda_i) \right )\]
forms a test supermartingale with respect to $\Fcal$.
\end{lemma}

\begin{proof}
Writing out the conditional expectation of $M_t^\psi$ for any $t \geq 2$,
\begin{align*}
    \EE \left ( M_t^\psi(\mu) \mid \Fcal_{t-1} \right ) &= \EE \left ( \prod_{i=1}^t \exp\left ( \lambda_i(X_i -\mu) - v_i \psi(\lambda_i)  \right ) \Bigm | \Fcal_{t-1} \right )\\
    &\stackrel{(i)}{=} \prod_{i=1}^{t-1} \exp\left ( \lambda_i(X_i -\mu) - v_i \psi(\lambda_i)  \right ) \underbrace{\EE\left [ \exp \left ( \lambda_t (X_t - \mu) - v_t \psi(\lambda_t) \right ) \mid \Fcal_{t-1} \right ]}_{\leq 1 \text{ by assumption}}\\
    &= M_{t-1}^\psi(\mu),
\end{align*}
where $(i)$ follows from the fact that $\exp\left ( \lambda_i(X_i - \mu) - v_i\psi(\lambda_i)\right)$ is $\Fcal_{t-1}$-measurable for $i \leq t-1$. Since $\Fcal_0$ was assumed to be trivial, for $M_1$ we have that 
\[ \EE[M_1^\psi(\mu)|\Fcal_0] = \underbrace{\EE \left [ \exp\left ( \lambda_1(X_1 -\mu) - v_1 \psi(\lambda_1)  \right ) \right ]}_{\leq 1 \text{ by assumption}},\]
which completes the proof.
\myqed
\end{proof}

\subsection{Proof of Proposition~\ref{proposition:predmixHoeffding}}
\label{proof:hoeffding}
The proof proceeds in three steps. First, apply a standard MGF bound by \citet{hoeffding_probability_1963}. Second, we apply Lemma~\ref{lemma:predmix}. Finally, we apply Theorem~\ref{theorem:4step} to obtain a CS and take a union bound.

\bigskip
\noindent \textbf{Step 1.} By \citet{hoeffding_probability_1963}, we have that $\EE \left [\exp(\lambda_t(X_t - \mu) - \psi_H(\lambda_t) )\mid \Fcal_{t-1} \right ] \leq 1$ since $X_t \in [0, 1]$ almost surely and since $\lambda_t$ is $\Fcal_{t-1}$-measurable.

\bigskip
\noindent \textbf{Step 2.} By Step 1 and Lemma~\ref{lemma:predmix}, we have that 
\[ M_t^\PMH(\mu) := \prod_{i=1}^t \exp \left ( \lambda_i(X_i - \mu) - \psi_H(\lambda_i) \right ) \]
forms a test supermartingale.

\bigskip
\noindent \textbf{Step 3.} By Step 2 combined with Theorem~\ref{theorem:4step}, we have that
\begin{align*}
    &P\left ( \exists t \geq 1 : \mu \leq \frac{\sum_{i=1}^t \lambda_i X_i }{ \sum_{i=1}^t \lambda_i } - \frac{\log(1/\alpha) + \sum_{i=1}^t \psi_H(\lambda_i)}{\sum_{i=1}^t \lambda_i} \right )\\
    =\ &P\left ( \exists t \geq 1 : M_t^\PMH(\mu) \geq 1/\alpha \right ) \leq \alpha.
\end{align*}
Applying the same bound to $(-X_t)_{t=1}^\infty$ with mean $-\mu$ and taking a union bound, we have the desired result,
\[P\left ( \exists t \geq 1 : \mu \notin \left ( \frac{\sum_{i=1}^t \lambda_i X_i }{ \sum_{i=1}^t \lambda_i } \pm \frac{\log(2/\alpha) + \sum_{i=1}^t \psi_H(\lambda_i)}{\sum_{i=1}^t \lambda_i} \right ) \right ) \leq \alpha, \]
which completes the proof.
\qed

\subsection{Proof of Theorem~\ref{theorem:EBCS}}
\label{proof:EBCS}
By Lemma~\ref{lemma:predmix} combined with Theorem~\ref{theorem:4step}, it suffices to prove that 
\[ \EE_P\left [ \exp\left \{ \lambda_t(X_t - \mu) - v_t \psi_E(\lambda_t) \right \} \mid \Fcal_{t-1} \right ] \leq 1. \]
For succinctness, denote 
\[Y_{t} := X_{t} - \mu ~~~\text{ and }~~~ \delta_t := \widehat \mu_t - \mu. \]
Note that $\EE_P(Y_t \mid \Fcal_{t-1}) = 0$.
It then suffices to prove that for any $[0, 1)$-bounded, $\Fcal_{t-1}$- measurable $\lambda_{t} \equiv \lambda_{t}(X_1^{t-1})$, 
\[ \EE \Bigg [\exp \Bigg \{ \lambda_{t} Y_{t} - 4(Y_{t} - \delta_{t-1})^2\psi_{E}(\lambda_{t}) \Bigg \}  \Bigm | \Fcal_{t-1} \Bigg] \leq 1.\]
Indeed, in the proof of Proposition 4.1 in \citet{fan_exponential_2015}, $\exp\{\xi\lambda  - 4\xi^2 \psi_{E} (\lambda)\} \leq 1 + \xi \lambda$ for any $\lambda \in [0, 1)$ and $\xi \geq -1$. Setting $\xi := Y_{t} - \delta_{t-1} = X_{t} - \widehat \mu_{t-1}$,
\begin{align*}
	&\EE \Bigg [\exp \Bigg \{ \lambda_{t} Y_{t} - 4(Y_{t} - \delta_{t-1})^2 \psi_{E}(\lambda_{t} ) \Bigg \}  \Bigm | \Fcal_{t-1} \Bigg]\\
	&=\EE \Big [\exp \Big \{ \lambda_{t} (Y_{t}-\delta_{t-1}) - 4(Y_{t} - \delta_{t-1})^2 \psi_{E}(\lambda_{t}) \Big \}  \bigm | \Fcal_{t-1} \Big] \exp(\lambda_{t} \delta_{t-1})\\
	    &\leq \EE\Big [1 + (Y_{t} - \delta_{t-1} )\lambda_{t} \mid \Fcal_{t-1} \Big ]\exp(\lambda_{t} \delta_{t-1}) ~\stackrel{(i)}{=} \EE\big [1 - \delta_{t-1} \lambda_{t} \mid \Fcal_{t-1} \big ]\exp(\lambda_{t} \delta_{t-1}) ~\stackrel{(ii)}{\leq} 1,
\end{align*}
where equality $(i)$ follows from the fact that $Y_{t}$ is conditionally mean zero,
and inequality $(ii)$ follows from the inequality $1-x \leq \exp(-x)$ for all $x \in \mathbb R$. This completes the proof. \qed


\subsection{Proof of Proposition~\ref{proposition:betting}}
\label{proof:betting}
We proceed by proving $(d) \implies (c) \implies (b) \implies (a) \implies (d)$.
\\

\noindent \textbf{Proof of $(d)\implies(c)$.}
This claim follows from the fact that for $\lambda \in [-1/(1-\mu), 1/\mu]$, we have that $(\lambda, \lambda, \dots)$ is a $[-1/(1-\mu), 1/\mu]$-valued predictable sequence.

\smallskip
\noindent \textbf{Proof of $(c)\implies(b)$.}
By the assumption of $(c)$, we have that for $\lambda = 0.5$, $\Kcal_t(\mu)$ forms a test martingale. Furthermore, since $X_i, \mu \in [0, 1]$ for each $i \in \{1, 2, \dots \}$, we have that $1 + 0.5 (X_i - \mu) > 0$ almost surely for each $i$. Therefore, $(\Kcal_t(\mu))_{t=1}^\infty$ is a strictly positive test martingale.

\smallskip
\noindent \textbf{Proof of $(b)\implies(a)$.}
Suppose that there exists $\lambda \in \RR\setminus\{0\}$ such that $\Kcal_t(\mu)$ forms a strictly positive martingale. Then we must have 
\begin{align*}
    \Kcal_{t-1}(\mu) &= \EE \left ( \Kcal_t(\mu) \mid \Fcal_{t-1} \right) \\
    &= \Kcal_{t-1}(\mu) \cdot \EE \left ( 1 + \lambda (X_i - \mu) \mid \Fcal_{t-1} \right) \\
    &= \Kcal_{t-1}(\mu) \cdot \left [1 + \lambda(\EE(X_t \mid \Fcal_{t-1}) - \mu) \right]. 
\end{align*} 
Now since $\Kcal_{t-1}(\mu) > 0$, we have that
\[  1 + \lambda (\EE(X_t \mid \Fcal_{t-1}) - \mu) = 1. \]
Since $\lambda \neq 0$ by assumption, we have that $\EE (X_t \mid \Fcal_{t-1}) = \mu$ as required.

\smallskip
\noindent \textbf{Proof of $(a)\implies(d)$.}
Let $(\lambda_t(\mu))_{t=1}^\infty$ be a $(-1/(1-\mu), 1/\mu)$-valued predictable sequence. Then $\Kcal_t(\mu)$ is clearly nonnegative and $\Kcal_0(\mu)=1$ by definition. Writing out the conditional mean of the capital process for any $t \geq 1$, 
\begin{align*}
\EE \left ( \Kcal_t(\mu) \mid \Fcal_{t-1} \right ) &= \Kcal_{t-1}(\mu) \cdot \EE\left ( 1 + \lambda_t(\mu) (X_i - m) \mid \Fcal_{t-1} \right ) \\
&= \Kcal_{t-1}(\mu) \cdot \left [ 1 + \lambda_t(\mu) (\EE(X_i\mid \Fcal_{t-1}) - \mu) \right] \\
&= \Kcal_{t-1}(\mu),
\end{align*}
and thus $\Kcal_t(\mu)$ forms a test martingale.
\smallskip

The proof of the final part of the proposition is simple. Let $(M_t)$ be a test martingale for $\Pcal^\mu$. Define $Y_t := M_t/M_{t-1}$ if $M_{t-1} > 0$, and as $Y_t :=0$ otherwise. Now note that $M_t = \prod_{i=1}^t Y_t$ and $\EE_P[Y_t | \Fcal_{t-1}] = 1$ for any $P \in \Pcal^\mu$. In other words, every test martingale is a product of nonnegative random variables with conditional mean one. Now rewrite $Y_t$ as $(1+f_t(X_t))$ for some predictable function $f_t$. Since $Y_t$ is nonnegative, we must have $f_t(X_t) \geq -1$ , and since $Y_t$ is conditional mean one, we must have $f_t(X_t)$ is conditional mean zero. Such a representation in fact holds true for any test martingale, and we have not yet used the fact that we are working with test martingales for $\Pcal^\mu$. Now, the proof ends by noting that the only predictable functions $f_t$ with the latter property under every $P \in \Pcal^\mu$ has the form $\lambda_t(X_t - \mu)$ for some predictable $\lambda_t$; any nonlinear function of $X_t$ would not have mean zero under \emph{every distribution} with mean $\mu$. 

This completes the proof of Proposition~\ref{proposition:betting} altogether.
\qed

\subsection{Proof of Proposition~\ref{prop:universal}}\label{proof:universal}

We only prove the martingale part of the proposition, since the supermartingale aspect follows analogously, and as mentioned early in the paper, inequalities and equalities are meant in an almost sure sense.

First, it is easy to check that if $(M_t)$ is a test martingale for  $\Scal$, then $M_t$ is the product of nonnegative conditionally unit mean terms, that is $M_t = \prod_{i=1}^t Y_i$ such that for all $S \in \Scal$, we have $\EE_S[Y_i|\Fcal_{i-1}]=1$ and $Y_i\geq 0$. (Indeed, one can identify $Y_i:=\tfrac{M_i}{M_{i-1}}\mathbf{1}_{M_{i-1} > 0}$.) Now, define $Z'_i := Y_i - 1$, and note that $Z'_i \geq -1$, and $\EE_S[Z'_i|\Fcal_{t-1}]=0$. 
Thus, $M_t$ has been represented as $\prod_{i=1}^t (1+Z'_i)$.
Now, the proof is completed by noting that any such $Z'_i$ can be written as $\lambda_i Z_i$ for a predictable $\lambda_i$ (this step is purely cosmetic). \qed

\subsection{Proof of Theorem~\ref{theorem:hedgedCS}}
\label{proof:hedgedCS}
First, we present Lemma~\ref{lemma:convexityOfHedged} which establishes that the hedged capital process is a quasiconvex function of $m$ (and thus has convex sublevel sets). We then invoke this lemma to prove the main result.
\begin{lemma}
\label{lemma:convexityOfHedged}
    Let $\theta \in [0, 1]$ and 
    \begin{align*}
        \Kcal_t^\pm(m) &:= \max \left \{ \theta \Kcal_t^+(m), (1-\theta)\Kcal_t^-(m) \right \} \\ 
        &\equiv \max \left \{ \theta \prod_{i=1}^t (1 + \lambda_i^+(m) \cdot (X_i - m)), (1-\theta) \prod_{i=1}^t (1 - \lambda_i^-(m) \cdot (X_i - m)) \right \}
    \end{align*}
    be the hedged capital process as in Section~\ref{section:capitalProcess}. Consider the $(1-\alpha)$ confidence set of the same theorem,
    \[ \BI^\pm_t \equiv \BI^\pm(X_1, \dots, X_t) := \left \{ m \in [0, 1] : \Kcal_t^\pm(m) < \frac{1}{\alpha} \right \}. \]
    Then $\BI^\pm_t$ is an interval on $[0, 1]$.
\end{lemma}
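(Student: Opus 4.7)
The plan is to show that $\Kcal_t^+(m)$ is nonnegative and non-increasing in $m$, that $\Kcal_t^-(m)$ is nonnegative and non-decreasing in $m$, and then invoke the elementary fact that the maximum of a non-increasing function and a non-decreasing function on an interval is quasiconvex. Quasiconvexity immediately yields that the sublevel set $\BI_t^\pm = \{m : \Kcal_t^\pm(m) < 1/\alpha\}$ is convex in $\RR$, hence an interval.

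First I would analyze a single factor of $\Kcal_t^+(m) = \prod_{i=1}^t (1 + \lambda_i^+(m)(X_i - m))$ with $\lambda_i^+(m) = |\dot\lambda_i^+| \wedge c/m$. Using $\lambda_i^+(m) \cdot m \leq c \leq 1$, nonnegativity follows since $1 + \lambda_i^+(m)(X_i-m) \geq 1 - \lambda_i^+(m)\, m \geq 1-c \geq 0$. For monotonicity I would split on the two pieces of $\lambda_i^+(m)$: where $\lambda_i^+(m) = |\dot\lambda_i^+|$ is constant in $m$, the factor is affine with slope $-|\dot\lambda_i^+| \leq 0$; where $\lambda_i^+(m) = c/m$, the factor equals $(1-c) + cX_i/m$, which is non-increasing in $m$ because $X_i \geq 0$. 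The two pieces agree at the transition $m = c/|\dot\lambda_i^+|$, so each factor is a nonnegative, non-increasing function of $m$ on $[0,1]$, and the product of such functions is itself nonnegative and non-increasing.

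For $\Kcal_t^-(m) = \prod_{i=1}^t (1 - \lambda_i^-(m)(X_i-m))$ with $\lambda_i^-(m) = |\dot\lambda_i^-| \wedge c/(1-m)$, the symmetric argument applies. Nonnegativity follows from $\lambda_i^-(m)(1-m) \leq c \leq 1$. Where $\lambda_i^-(m) = |\dot\lambda_i^-|$, the factor is affine with slope $|\dot\lambda_i^-| \geq 0$; where $\lambda_i^-(m) = c/(1-m)$, the factor equals $1 - c(X_i-m)/(1-m)$, and a direct derivative computation gives $\frac{d}{dm}\frac{X_i-m}{1-m} = \frac{X_i-1}{(1-m)^2}\leq 0$, so the factor is non-decreasing in $m$. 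Hence each factor is nonnegative and non-decreasing, and so is the product $\Kcal_t^-(m)$.

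To finish, I would verify the elementary lemma: if $f$ is non-increasing and $g$ is non-decreasing on an interval, then $h(m) := \max(f(m), g(m))$ is quasiconvex. Indeed, for any $a < b < c$, $f(b) \leq f(a)$ and $g(b) \leq g(c)$, giving $h(b) \leq \max(f(a), g(c)) \leq \max(h(a), h(c))$. Applying this with $f = \theta \Kcal_t^+$ and $g = (1-\theta)\Kcal_t^-$ shows $\Kcal_t^\pm(m)$ is quasiconvex on $[0,1]$, so $\BI_t^\pm$, being a strict sublevel set of a quasiconvex function on an interval, is itself an interval. The main (minor) obstacle is the piecewise nature of $\lambda_i^\pm(m)$; the technical content is checking monotonicity holds across both pieces and at their junction, which is routine.
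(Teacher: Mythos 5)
Your proof is correct and follows essentially the same route as the paper's: show each factor of $\Kcal_t^+$ (resp.\ $\Kcal_t^-$) is nonnegative and nonincreasing (resp.\ nondecreasing) in $m$ by splitting on the two pieces of the truncated bet, conclude the products inherit these monotonicity and nonnegativity properties, and deduce quasiconvexity of the max, so the strict sublevel set is an interval. The only cosmetic difference is that you prove directly that the max of a nonincreasing and a nondecreasing function is quasiconvex, whereas the paper first observes each monotone factor is quasiconvex and then invokes that a max of quasiconvex functions is quasiconvex; your explicit per-factor continuity check at the truncation boundary is a nice touch but not strictly necessary since min of continuous functions is continuous.
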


\begin{proof}
Since sublevel sets of quasiconvex functions are convex, it suffices to prove that $\Kcal_t^\pm(m)$ is a quasiconvex function of $m \in [0, 1]$. The crux of the argument is: the product of nonnegative nonincreasing functions is quasiconvex, the product of nonnegative nondecreasing functions is also quasiconvex, and the maximum of quasiconvex functions is quasiconvex.

To elaborate, we will proceed in two steps. First, we use an induction argument to show that $\Kcal_t^+(m)$ and $\Kcal_t^-(m)$ are nonincreasing and nondecreasing, respectively, and hence quasiconvex. Finally, we note that $\Kcal_t^\pm(m) := \max\left \{\theta\Kcal_t^+(m), (1-\theta)\Kcal_t^-(m) \right \}$ is a maximum of quasiconvex functions and is thus itself quasiconvex.

\paragraph{Step 1.} First, since $\dot \lambda_t^+$ does not depend on $m$, we have that
\[ 1 + \lambda^+_t(m)(X_t - m) := 1 + \left ( |\dot \lambda_t^+| \land \frac{c}{m}\right )(X_t - m) \]
is nonnegative and nonincreasing in $m$ for each $t \in \{1, 2, \dots\}$. (To see this, consider the terms with and without truncation separately.) Suppose for the sake of induction that
\[ \prod_{i=1}^{t-1}\left (1 + \lambda_i^+(m)(X_i - m)\right )\]
is nonnegative and nonincreasing in $m$. Then,
\begin{align*}
    \Kcal_t^+(m) &:= \prod_{i=1}^{t}\left (1 + \lambda_i^+(m)(X_i - m)\right )\\
    &= \left ( 1 + \lambda_t^+(m)(X_t - m) \right ) \cdot \prod_{i=1}^{t-1}\left (1 + \lambda_i^+(m)(X_i - m)\right ) 
\end{align*}
is a product of nonnegative and nonincreasing functions, and is thus itself nonnegative and nonincreasing. By a similar argument, $\Kcal_t^-(m)$ is nonnegative and \textit{nondecreasing}. $\Kcal_t^+(m)$ and $\Kcal_t^-(m)$ are thus both quasiconvex.

\paragraph{Step 2.} Since the maximum of quasiconvex functions is quasiconvex, we infer that 
\[ \Kcal_t^\pm(m) := \max \left \{ \theta\Kcal_t^+(m), (1-\theta)\Kcal_t^-(m) \right \} \]
is quasiconvex. In particular, the sublevel sets of quasiconvex functions is convex, and thus
\[ \BI_t^\pm := \left \{ m \in [0, 1] : \Kcal_t^\pm(m) < \frac{1}{\alpha} \right \} \]
is an interval, which completes the proof of Lemma~\ref{lemma:convexityOfHedged}. \myqed
\end{proof}


\begin{proof}[\myproofname{Theorem~\ref{theorem:hedgedCS}}]
The proof proceeds in three steps. First we show that $\Kcal_t^\pm(\mu)$ is upper-bounded by test martingale. Second, we apply the 4-step procedure in Theorem~\ref{theorem:4step} to get a CS for $\mu$. Third and finally, we invoke Lemma~\ref{lemma:convexityOfHedged} to conclude that the CS is indeed convex at each time $t$.

\smallskip 
\paragraph{Step 1.} We first upper bound $\Kcal_t^\pm(m)$ as follows:
\begin{align*}
\Kcal_t^\pm(m) &:= \max \left \{ \theta\Kcal_t^+(m), (1-\theta)\Kcal_t^-(m) \right \} \\
&\leq \theta\Kcal_t^+(m) + (1-\theta) \Kcal_t^-(m) =: \Mcal_t^\pm(m).
\end{align*}
By Proposition~\ref{proposition:betting}, we have that $\Kcal_t^+(\mu)$ and $\Kcal_t^-(\mu)$ are test martingales for $\Pcal$.
For each $P\in\Pcal$, writing out the conditional expectation of $\Mcal_t^\pm(\mu)$ for any $t\geq1$,
\begin{align*}
\EE_P\left [ \Mcal_t^\pm(\mu) \mid \Fcal_{t-1} \right ] &= \EE_P\left [\theta \Kcal_t^+(\mu) + (1-\theta) \Kcal_t^-(\mu) \Bigm | \Fcal_{t-1} \right ] \\
&= \theta \EE_P(\Kcal_t^+(\mu) \mid \Fcal_{t-1}) + (1-\theta) \EE_P(\Kcal_t^-(\mu) \mid \Fcal_{t-1})\\
&= \theta\Kcal_{t-1}^+(\mu) + (1-\theta)\Kcal_{t-1}^-(\mu)\\
&= \Mcal_{t-1}^\pm(\mu),
\end{align*}
and $\Mcal_0^\pm(\mu) = \theta\Kcal_0^+(\mu) + (1-\theta)\Kcal_0^-(\mu) = 1$.
Therefore, $(\Mcal_t^\pm(\mu))_{t=0}^\infty$ is a test martingale for $\Pcal$.

\paragraph{Step 2.} By Step 1 combined with Theorem~\ref{theorem:4step} we have that
\[ \BI_t^\pm := \left \{ m \in [0, 1] : \Kcal_t^\pm(m) < \frac{1}{\alpha} \right \} \]
forms a $(1-\alpha)$-CS for $\mu$.

\paragraph{Step 3.} Finally, by Lemma~\ref{lemma:convexityOfHedged}, we have that $\BI_t^\pm$ is an interval for each $t \in \{1, 2, \dots\}$, which completes the proof of Theorem~\ref{theorem:hedgedCS}.
\myqed
\end{proof}

\subsection{Proof of Lemma~\ref{lemma:extendedFan}}
\label{proof:extendedFan}
Following the proof of Lemma~4.1 in \citet{fan_exponential_2015}, we have that the function
\begin{equation}
\label{eq:incFun}
    f(x) :=
    \begin{dcases}
        \frac{\log(1 + x) - x}{x^2 /2} & x \in (-1, \infty)\setminus\{ 0 \}\\
        -1 & x = 0
    \end{dcases}
\end{equation}
is an increasing and continuous function in $x$ (note that $f(0)$ is defined as $-1$ because it is a removable singularity). For any $y \geq -m$ and $\lambda \in [0, 1/m)$ we have 
\begin{equation}
\label{eq:lambdaIneq}
    \lambda y \geq -m\lambda > -1 .
\end{equation} 
Combining \eqref{eq:incFun} and \eqref{eq:lambdaIneq}, we have
\begin{align*}
    & \frac{\log(1+\lambda y) - \lambda y}{\lambda^2 y^2 / 2} \geq \frac{\log(1-m\lambda) + m\lambda }{\lambda^2m^2 / 2}, \\
    \text{ and thus, } ~ ~ & \log (1 + \lambda y) - \lambda y \overset{(i)}{\geq} \frac{y^2}{m^2} \left (\log ( 1- m\lambda) + m\lambda \right ).
\end{align*}
Above, $(i)$ can be quickly verified for the case when $\lambda y = 0$, and follows from \eqref{eq:incFun} and \eqref{eq:lambdaIneq} otherwise. Rearranging terms, we obtain the first half of the desired result,
\begin{equation}
\label{eq:FanFirstHalfProof}
\log ( 1 + \lambda y) \geq \lambda y + \frac{y^2}{m^2}(\log (1-m\lambda) + m\lambda). 
\end{equation}
Now, for any $y \leq 1-m$ and $\lambda \in (-1/(1-m), 0]$, we have 
\[
    \label{eq:lambdaIneqNegative}
    \lambda y \geq (1-m)\lambda > -1,
\]
and proceed similarly to before to obtain
\[ \log (1+\lambda y) \geq \lambda y + \frac{y^2}{(1-m)^2} (\log (1+(1-m)\lambda) - (1-m)\lambda),\]
which completes the proof.
\qed

\subsection{Proof of Proposition~\ref{proposition:hedgedKellyInterval}}
\label{proof:hedgedKellyInterval}

Since sublevel sets of convex functions are convex, it suffices to prove that with probability one, $\Kcal_n^\hgKelly(m)$ is a convex function in $m$ on the interval $[0, 1]$.

We proceed in three steps. First, we show that if two functions are (a) both nonincreasing (or both nondecreasing), (b) nonnegative, and (c) convex, then their product is convex. Second, we use Step~1 and an induction argument to prove that $\prod_{i=1}^t (1 + \gamma(X_i/m - 1))$ is convex for any fixed $\gamma \in [0, 1]$. Third and finally, we show that $\Kcal_n^\hgKelly(m)$ is a convex combination of convex functions and is thus itself convex.

\paragraph{Step 1.} The claim is that if two functions $f$ and $g$ are (a) both nonincreasing (or both nondecreasing), (b) nonnegative, and (c) convex on a set $\mathcal S \subseteq \mathbb R$, then their product is also convex on $\mathcal S$. Let $x_1, x_2 \in \mathcal S$, and let $t \in [0, 1]$. Furthermore, abbreviate $f(x_1)$ by $f_1$, $g(x_1)$ by $g_1$, and similarly for $f_2$ and $g_2$. Writing out the product $fg$ evaluated at $tx_1 + (1-t)x_2$,
\begin{align*}
    (fg) (tx_1 + (1-t)x_2) &= f(tx_1 + (1-t)x_2)g(tx_1 + (1-t)x_2) \\
    &= |f(tx_1 + (1-t)x_2)||g(tx_1 + (1-t)x_2)| \\
    &\leq | t f_2 + (1-t)f_2 | | t g_1 + (1-t)g_2 |\\
    &= t^2f_1g_1 + t(1-t)\left ( f_1g_2 + f_2g_1 \right ) + (1-t)^2f_2g_2,
\end{align*}
where the second equality follows from assumption that $f$ and $g$ are nonnegative, and the inequality follows from the assumption that they are both convex. To show convexity of $(fg)$, it then suffices to show that, 
\begin{equation}
\label{eqn:needToSatisfyConvex}
\Big ( tf_1g_1 + (1-t) f_2g_2 \Big ) - \Big (t^2f_1g_1 + t(1-t)\left [ f_1g_2 + f_2g_1 \right ] + (1-t)^2f_2g_2 \Big)\geq 0.
\end{equation}
To this end, write out the above expression and group terms,
\begin{align*}
    &\Big ( tf_1g_1 + (1-t) f_2g_2 \Big ) - \Big (t^2f_1g_1 + t(1-t)\left [ f_1g_2 + f_2g_1 \right ] + (1-t)^2f_2g_2 \Big) \\
    &= (1-t) tf_1g_1 + t(1-t)f_2g_2 - t(1-t)[f_1g_2 + f_2g_1] \\
    &= t(1-t) \Big ( f_1g_1 +f_2g_2 - f_1g_2 - f_2g_1 \Big ) \\
    &= t(1-t)(f_1 - f_2)(g_1 - g_2).
\end{align*}
Now, notice that $t(1-t) \geq 0$ since $t \in [0, 1]$ and that $(f_1 - f_2)(g_1 - g_2) \geq 0$ by the assumption that $f$ and $g$ are both nonincreasing or nondecreasing. Therefore, we have satisfied the inequality in \eqref{eqn:needToSatisfyConvex}, and thus $fg$ is convex on $\mathcal S$.

\paragraph{Step 2.} Now, we prove convexity of $\prod_{i=1}^t (1 + \gamma(X_i/m - 1))$ for a fixed $\gamma \in [0, 1]$. First note that for any $\gamma \in [0, 1]$, $1 + \gamma(X_i / m - 1)$ is a nonincreasing, nonnegative, and convex function in $m \in [0, 1]$. Suppose for the sake of induction that conditions (a), (b), and (c) hold for $\prod_{i=1}^{n-1}(1+\gamma(X_i/m -1))$. By the inductive hypothesis, we have that
\[ \prod_{i=1}^n (1+\gamma(X_i/m - 1)) = (1+\gamma(X_n/m - 1)) \cdot \prod_{i=1}^{n-1} (1+\gamma(X_i/m - 1)) \]
is a product of functions satisfying (a) through (c). By Step 1, $\prod_{i=1}^n(1+\gamma(X_i/m - 1))$ is convex in $m \in [0, 1]$. A similar argument can be made for $\Kcal_n^-(m)$, but instead of the multiplicands being nonincreasing, they are now nondecreasing.

\paragraph{Step 3.} Now, notice that for the evenly-spaced points $(\lambda^{1+}, \dots, \lambda^{G+})$ on $[0, 1/m]$, we have that $(\gamma^{1+}, \dots, \gamma^{G+}) = (m \lambda^{1+}, \dots, m\lambda^{G+})$ are $G$ evenly-spaced points on $[0, 1]$. It then follows that for any $m$ and any $g \in \{0, 1, \dots, G\}$,
\[ m \mapsto \prod_{i=1}^n(1 + \lambda^{g+}(X_i - m)) \]
is a nonincreasing, nonnegative, and convex function in $m \in [0, 1]$. It follows that
\[ \frac{1}{G} \sum_{g=1}^G \prod_{i=1}^n (1 + \lambda^{g+}(X_i - m)) \]
is convex in $m \in [0, 1]$. A similar argument goes through for $\frac{1}{G} \sum_{g=1}^G \prod_{i=1}^n (1 + \lambda^{g+}(X_i - m)$. Finally, since $\theta \in [0, 1]$, we have that 
\[ \frac{\theta }{G} \sum_{g=1}^G \prod_{i=1}^n (1 + \lambda^{g+} (X_i - m) ) + \frac{1-\theta }{G} \sum_{g=1}^G \prod_{i=1}^n (1 + \lambda^{g-} (X_i - m))\]
is a convex combination of convex functions in $m \in [0, 1]$. It then follows that 
\[ \{ m\in [0, 1] : \Kcal_t^\hgKelly(m) < 1/\alpha \} \]
is an interval, which completes the proof.
\qed

\subsection{Proof of Proposition~\ref{proposition:bettingWoR}}
\textbf{Proof of (1)$\implies$(2).}
By definition of $\Kcal^\WoR_t(\mu)$, we have
\begin{align*}
    \EE\left ( \Kcal_t^\WoR(\mu) \mid \Fcal_{t-1} \right ) &= \prod_{i=1}^{t-1} \left (1 + \lambda_i(\mu) \cdot (X_i - \mu_t^\WoR)\right ) \cdot \EE\left (1 + \lambda_t(\mu) \cdot (X_t - \mu_t^\WoR ) \mid \Fcal_{t-1} \right )\\
    &= \Kcal_{t-1}^\WoR(\mu) \cdot \left ( 1 + \lambda_t(\mu)\cdot (\EE(X_t\mid \Fcal_{t-1}) - \mu_t^\WoR ) \right )\\
    &= \Kcal_{t-1}^\WoR(\mu).
\end{align*}
Since $\Kcal_0^\WoR(\mu) \equiv 1$ by convention, we have that $\Kcal_t^\WoR(\mu)$ is a martingale. 

Now, note that since $X_t \in [0, 1]$ and $\lambda_t^\WoR(\mu) \in [-1/(1-\mu_t^\WoR), 1/\mu_t^\WoR]$ for each $t$ by assumption, we have that 
\( 1 + \lambda_t(\mu) \cdot \left ( X_t - \mu_t^\WoR \right ) \geq 0 \)
and thus $\Kcal_t^\WoR(\mu) \geq 0$. Therefore, $\Kcal_t^\WoR(\mu)$ is a test martingale.

\smallskip
\noindent \textbf{Proof of (2)$\implies$(1).}
Suppose that $\Kcal_t^\WoR(\mu)$ is a test martingale for any $(\lambda_t(\mu))_{t=1}^N$ with $\lambda_t(\mu) \in [-1/(1-\mu_t^\WoR, 1/\mu_t^\WoR]$, but suppose for the sake of contradiction that $\EE(X_{t^\star} \mid \Fcal_{t^\star-1}) \neq \mu_{t^\star}^\WoR$ for some $t^\star \in \{1, 2, \dots\}$. Set $\lambda_1 = \lambda_2 = \cdots = \lambda_{t^\star-1} = 0$ and $\lambda_{t^\star} = 1$. Then,
\[ \Kcal_{t^\star}^\WoR(\mu) \equiv \Kcal_{t^\star-1}^\WoR(\mu) \cdot (1 + \lambda_{t^\star}(X_{t^\star} - \mu_{t^\star}^\WoR)) = 1 + X_{t^\star} - \mu_{t^\star}^\WoR. \]
By assumption of $\Kcal_t^\WoR(\mu)$ forming a martingale, we have that $\EE\left ( \Kcal_{t^\star}^\WoR(\mu) \mid \Fcal_{t^\star-1} \right ) = \Kcal_{t^\star-1}^\WoR(\mu) = 1$. On the other hand, since $\EE \left (X_{t^\star} \mid \Fcal_{t^\star-1} \right) \neq \mu_{t^\star}^\WoR$, we have
\[ \EE\left ( \Kcal_{t^\star}^\WoR(\mu) \mid \Fcal_{t^\star-1} \right ) = \EE\left (1 + X_{t^\star} - \mu_{t^\star}^\WoR \mid \Fcal_{t^\star - 1}\right ) \neq 1, \]
a contradiction. Therefore, we must have that $\EE\left ( X_t \mid \Fcal_{t-1} \right ) = \mu_t^\WoR$ for each $t$, which completes the proof of (2) $\implies$ (1) and Proposition~\ref{proposition:bettingWoR}.
\qed

\subsection{Proof of Theorem~\ref{theorem:hedgedCSWoR}}
\label{proof:hedgedCSWoR}
The proof that $\BI_t^{\pm, \WoR}$ forms a $(1-\alpha)$-CS for $\mu$ proceeds in exactly the same manner as Theorem~\ref{theorem:hedgedCS}, noting that $\EE\left (X_t \mid \Fcal_{t-1} \right ) = \mu_t^\WoR$ instead of $\mu$.

To show that $\BI_t^{\pm, \WoR}$ is indeed an interval for each $t \geq 1$, we note that the proof of Theorem~\ref{theorem:hedgedCS} applies since $m_t^\WoR$ is increasing or decreasing if and only if $m$ is increasing or decreasing, respectively. \qed


\section{How to bet: deriving adaptive betting strategies}
\label{section:howToBet}

In Section~\ref{section:hedgedCapitalProcess}, we presented CSs and CIs via the hedged capital process. We suggested a specific betting scheme which has strong empirical performance but did not discuss where it came from. In this section, we derive various betting strategies and discuss their statistical and computational properties.

\subsection{Predictable plug-ins yield good betting strategies}
\label{section:predmix_goodBetting}
First and foremost, we will examine why any predictable plug-in for empirical Bernstein-type CSs and CIs (i.e. those recommended in Theorem~\ref{theorem:EBCS} and Remark~\ref{remark:EBCI}) yield effective betting strategies. Consider the hedged capital process
\begin{align*}
\Kcal_t^\pm(m) &:= \max \left \{ \theta \prod_{i=1}^t (1 + \lambda_i^+(X_i - m)), (1-\theta)\prod_{i=1}^t (1 - \lambda_i^-(X_i - m)) \right \} \\
&\equiv \max \left \{ \theta \Kcal_t^+(m), (1-\theta)\Kcal_t^-(m) \right \} ~,
\end{align*}
where $(\lambda_t^+(m))_{t=1}^\infty$ and $(\lambda_t^-(m))_{t=1}^\infty$ are $[0, 1/m]$-valued and $[0, 1/(1-m)]$-valued predictable sequences as in Theorem~\ref{theorem:hedgedCS}. First, consider the ``positive'' capital process, $\Kcal_t^+(\mu)$ evaluated at $m = \mu$. An inequality that has been repeatedly used to derive empirical Bernstein inequalities \citep{howard_exponential_2018, howard_uniform_2019, waudby2020confidence}, including the current paper is the following due to 
\citet[equation 4.12]{fan_exponential_2015}: for any $y \geq -1$ and $\lambda \in [0,1)$, we have
\begin{equation}
\label{eq:fan}
\log(1+\lambda y) \geq \lambda y - 4\psi_E(\lambda) y^2.
\end{equation}
where $\psi_E(\lambda)$ is as defined in \eqref{eq:psi-E}.
If the predictable sequence $(\lambda_t^+(m))_{t=1}^\infty$ is further restricted to $[0, 1)$, then by \eqref{eq:fan} we have 
\begin{align*}
    \Kcal_t^+(\mu) &:= \prod_{i=1}^t (1 + \lambda_i^+(X_i - \mu)) ~\geq \exp \left ( \sum_{i=1}^t \lambda_i^+(X_i - \mu) - \sum_{i=1}^t 4 (X_i - \mu)^2 \psi_E(\lambda_i^+) \right )\\
    &\overset{(i)}\approx \exp \left ( \sum_{i=1}^t \lambda_i^+(X_i - \mu) - \sum_{i=1}^t4(X_i - \widehat \mu_{i-1})^2 \psi_E(\lambda_i^+)  \right )\\
    &= M_t^\PMEB(\mu),
\end{align*}
where $(i)$ follows from the approximations $\widehat \mu_{t-1} \approx \mu$ for large $t$. Not only does the approximate inequality $\Kcal^+_t(\mu) \gtrsim M^\PMEB_t(\mu)$  shed light on why a sensible empirical Bernstein predictable plug-in translates to a sensible betting strategy, but also why we might expect $\Kcal_t^+(m)$ to be more powerful than $M^\PMEB_t(m)$ for the same $[0, 1)$-valued predictable sequence $(\lambda_t^+(m))_{t=1}^\infty$. Moreover, $\Kcal_t^+(m)$ has the added flexibility of allowing $(\lambda_t(m))_{t=1}^\infty$ to take values in $[0, 1/m] \supset [0, 1)$ which we find --- through simulations --- tends to improves empirical performance (see Figure~\ref{fig:hedgedCI-tweaks} in Section~\ref{section:root-n-convergence-hedgedCI}). Finally, a similar story holds for $\Kcal_t^-(\mu)$ with the added caveat that $(\lambda^-_t)_{t=1}^\infty$ can instead take values in $[0, 1/(1-m)] \supset [0, 1)$ which as before, seems to improve empirical performance.

Despite the success of predictable plug-ins as betting strategies, it is natural to wonder whether it is preferable to focus on directly maximizing capital over time. As will be seen in the following section, these capital-maximizing approaches tend to have improved empirical performance, but are not always guaranteed to produce convex confidence sets (i.e. intervals). Nevertheless, it is worth examining some of these strategies both for their intuitive appeal and excellent empirical performance.


\subsection{Growth rate adaptive to the particular alternative (GRAPA)}
As alluded to in Section~\ref{section:history-short}, \citet{kelly1956new} dealt with capital processes, betting strategies, etc. in the fields of information and communication theory in the pursuit of maximizing the information rate over a channel. Kelly suggested that an effective betting strategy is one that maximizes a gambler's expected \textit{log-capital} --- i.e. the growth rate of the gambler's capital --- under a particular alternative.\footnote{This objective has also been arrived at indirectly as the dual in optimization programs for deriving regret bounds for Kullback-Leibler-based UCB algorithms in multi-armed bandit problems \citep{honda2010asymptotically,cappe2013kullback}.} However, Kelly's setup was a simplified special case of ours: Kelly's observations were binary, and the exact alternative was assumed known, while ours are merely bounded in $[0, 1]$ with an unknown alternative. Nevertheless, the principle of maximizing the log-capital can be adapted to our setting under bounded observations and an unknown alternative. We summarize this adaptation here and refer to it as maximizing the ``growth rate adaptive to the particular alternative'' or ``GRAPA'' for short.

Write the log-capital process at time $t$ as
\begin{equation}\label{eq:log-wealth}
\ell_t(\lambda_1^t, m) := \log(\Kcal_t(m)) = \sum_{i=1}^t \log (1+\lambda_i(m)(X_i-m)), 
\end{equation} 
for a general $[-1/(1-m), 1/m]$-valued sequence $(\lambda_t(m))_{t=1}^\infty$. If we were to choose a single value of $\lambda^\HS := \lambda_1 = \cdots = \lambda_t$  which maximizes the log-capital $\ell_t$ ``in hindsight'' (i.e. based on \textit{all} of the previous data), then this value is given by
\[ \frac{\partial \ell_t(\lambda^\HS, m)}{\partial\lambda^\HS} = \sum_{i=1}^t \frac{X_i - m}{1 + \lambda^\HS(X_i - m)} \overset{\text{set}}{=} 0.\]
However, $\lambda^\mathrm{HS}$ is clearly not predictable. Following \citet{kumon2011sequential} (who referred to this as the ``sequential optimization strategy''), we set $(\lambda^\SOS_t(m))_{t=1}^\infty$ such that 
\begin{equation} 
\label{eqn:sos}
\frac{1}{t-1}\sum_{i=1}^{t-1} \frac{X_i - m}{1 + \lambda^\SOS_t(m)(X_i - m)} \overset{\text{set}}{=} 0,
\end{equation}
truncated to lie between $(-c/(1-m), c/m)$ using some $c \leq 1$. Importantly, $\lambda^\SOS_t(m)$ only depends on $X_1, \dots, X_{t-1}$, and is thus predictable. 

This rule is a sequentially adaptive version of the worst-case ``GROW'' criterion of~\citet{grunwald_safe_2019}. To see the connection, one can derive~\eqref{eqn:sos} from a slightly different motivation. At the $t$-th step, we want to choose $\lambda_t(m)$ so that the wealth multiplier $(1+\lambda_t(m)(X_t-m))$ is as large as possible. The ideal choice would be
\begin{equation}
\label{eq:GRAPA}
\lambda^*_t(m) := \argmax_{\lambda \in [-1/(1-m), 1/m]} \EE_{P^\mu}[\log(1+\lambda(X_t-m)) \mid \Fcal_{t-1}],
\end{equation}
where $P^\mu$ is the unknown true distribution. Writing down the stationary condition for this optimization problem by differentiating through the expectation, we get
\begin{equation} 
\label{eqn:kkt}
\EE_{P^\mu} \left[ \frac{X_t - m}{1 + \lambda^*_t(m)(X_t - m)} \mid \Fcal_{t-1} \right] = 0.
\end{equation}
Since $P^\mu$ is unknown, using a simple empirical plug-in estimator yields~\eqref{eqn:sos}.

CSs constructed from $(\lambda^\SOS_t(m))_{t=1}^\infty$ tend to have excellent empirical performance, but can be prohibitively slow due to the required root-finding in \eqref{eqn:sos} for each time $t$ and $m\in [0, 1]$ (or a sufficiently fine grid of $[0, 1]$).  A similar but computationally inexpensive alternative to GRAPA is ``approximate GRAPA'' (aGRAPA), which we derive now.  

\subsection{Approximate GRAPA (aGRAPA)}
Rather than solve \eqref{eqn:sos}, we take the Taylor approximation of $(1+y)^{-1}$ by $(1-y)$ for $y \approx 0$ to obtain
\begin{align*}
    \frac{1}{t-1}\sum_{i=1}^{t-1} \frac{X_i - m}{1 + \lambda^\aSOS_t(m)(X_i - m)} &\approx \frac{1}{t-1}\sum_{i=1}^{t-1} \left (1 - \lambda^\aSOS_t(m)(X_i - m) \right)(X_i - m) \\
    &= \frac{1}{t-1}\sum_{i=1}^{t-1}(X_i - m) - \frac{\lambda^\aSOS_t(m)}{t-1} \sum_{i=1}^{t-1} (X_i - m)^2 \\
    &\overset{\text{set}}{=} 0,
\end{align*}
which, after appropriate truncation leads what we call the ``approximate GRAPA'' (aGRAPA) betting strategy,
\begin{align*}
    \lambda_t^\aSOS (m) := -\frac{c}{1-m} \lor \frac{\widehat \mu_{t-1} - m}{\widehat \sigma_{t-1}^2 + (\widehat \mu_{t-1} - m)^2} \land \frac{c}{m},
\end{align*}
for some truncation level $c \leq 1$. 
This expression is quite natural: we bet more aggressively if our empirical mean is far away from $m$, and are further emboldened if the empirical variance is small.

\begin{figure}[!htbp]
    \centering
    \includegraphics[width=\textwidth]{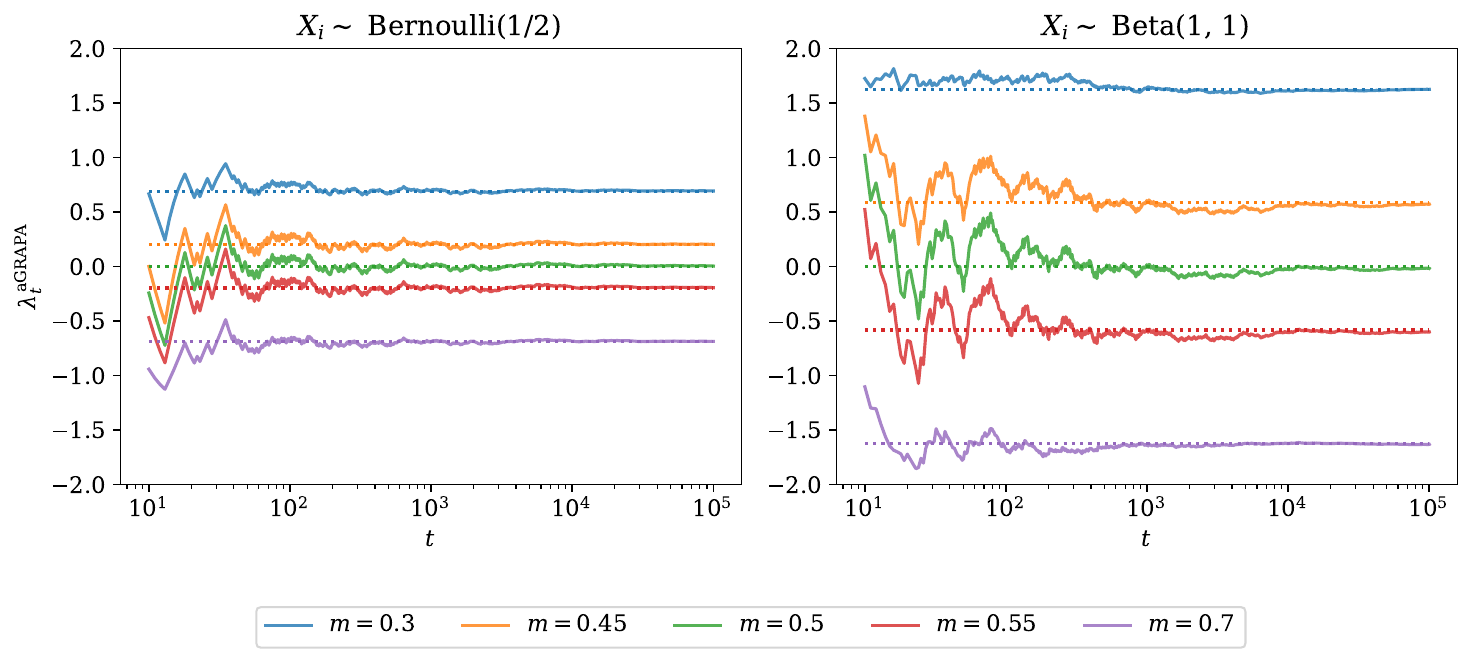}
    \caption{$\lambda_t^\aSOS$ for various values of $m$ under two distributions: Bernoulli(1/2) and Beta(1, 1). The dotted lines show the ``oracle'' bets, meaning $\lambda^\aSOS_t$ with estimates of the mean and variance replaced by their true values. As time passes, bets stabilize and approach their oracle quantities.}
    \label{fig:lambdaGROW}
\end{figure}

As alluded to at the end of Section~\ref{section:predmix_goodBetting}, CSs derived using the capital process $\Kcal_t(m)$ with arbitrary betting schemes are not always guaranteed to produce a convex set (interval). In fact, it is possible to construct scenarios where the sublevel sets of $\Kcal_t^\aSOS(m)$ are nonconvex in $m$ (see Section~\ref{section:aSOSsublevelSets} for an example). In our experience, this type of situation is not common, and one must actively search for such pathological examples.

\subsection{Lower-bound on the wealth (LBOW)}
Instead of maximizing $\log (\Kcal_t(m))$, we may aim to do so for a tight lower-bound on the wealth (LBOW). This technique has proven useful in the game-theoretic probability literature \citep[Proof of Lemma 3.3]{shafer_probability_2005} and \citep[Proof of Theorem 1]{cutkosky2018black}. Our lower bound will rely on an extension of Fan's inequality \eqref{eq:fan} to $\lambda \in (-1/(1-m), 1/m)$, summarized in the following lemma.
\begin{lemma}
\label{lemma:extendedFan}
    If $y \geq -m$, then for any $\lambda \in [0, 1/m)$, we have
    \[ \log (1 + \lambda y) \geq \lambda y + \frac{y^2}{m^2}(\log (1-m\lambda) + m\lambda). \]
    On the other hand, if $y \leq 1-m$, then for any $\lambda \in (-1/(1-m), 0]$, we have
    \[ \log (1 + \lambda y) \geq \lambda y + \frac{y^2}{(1-m)^2}(\log (1+(1-m)\lambda) - (1-m)\lambda). \]
Thus, for $y \in [-m,1-m]$, both of the above inequalities hold.
\end{lemma}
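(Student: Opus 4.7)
The plan is to recognize both inequalities as affine reparametrizations of the base Fan inequality \eqref{eq:fan} already cited above: for every $y' \geq -1$ and every $\lambda' \in [0,1)$,
\[ \log(1+\lambda' y') \geq \lambda' y' + (y')^2\bigl(\log(1-\lambda') + \lambda'\bigr), \]
which is just \eqref{eq:fan} with $-4\psi_E(\lambda') = \log(1-\lambda') + \lambda'$. Everything else will be a change of variables, and the only thing to watch is that the new variables land in the domain where the base inequality is valid.

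For the first inequality, assume $y \geq -m$ and $\lambda \in [0, 1/m)$, and set $y' := y/m$ and $\lambda' := m\lambda$. Then $y' \geq -1$ and $\lambda' \in [0,1)$, so the base inequality applies. Since $\lambda' y' = \lambda y$, substituting gives
\[ \log(1+\lambda y) \geq \lambda y + \frac{y^2}{m^2}\bigl(\log(1-m\lambda) + m\lambda\bigr), \]
which is exactly the first claim.

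For the second inequality, assume $y \leq 1-m$ and $\lambda \in (-1/(1-m), 0]$, and set $y' := -y/(1-m)$ and $\lambda' := -(1-m)\lambda$. From $y \leq 1-m$ we get $y' \geq -1$, and from $\lambda \in (-1/(1-m),0]$ we get $\lambda' \in [0,1)$, so the base inequality applies again. One checks $\lambda' y' = \lambda y$ and
\[ \log(1-\lambda') + \lambda' \;=\; \log\bigl(1+(1-m)\lambda\bigr) - (1-m)\lambda, \]
so substituting yields
\[ \log(1+\lambda y) \geq \lambda y + \frac{y^2}{(1-m)^2}\bigl(\log(1+(1-m)\lambda) - (1-m)\lambda\bigr). \]
The final sentence of the lemma, that for $y \in [-m, 1-m]$ both bounds hold, is then immediate because both hypotheses $y \geq -m$ and $y \leq 1-m$ are satisfied simultaneously on that interval, and each inequality applies on its own $\lambda$-domain.

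The only conceptual step is spotting the right reparametrization: the first case is a pure rescaling of the $y$-axis by $m$ (with a matching rescaling of $\lambda$), and the second case additionally reflects sign so that a nonpositive $\lambda$ gets mapped to a nonnegative $\lambda'$. There is no genuine obstacle; the potential pitfall is purely bookkeeping, namely double-checking that the domain constraints $y' \geq -1$ and $\lambda' \in [0,1)$ map exactly onto the stated ranges $y \geq -m$, $\lambda \in [0,1/m)$ in the first case and $y \leq 1-m$, $\lambda \in (-1/(1-m),0]$ in the second.
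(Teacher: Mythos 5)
Your proof is correct: all three domain checks work out ($y' \geq -1$, $\lambda' \in [0,1)$, and $\lambda' y' = \lambda y$), the identity $-4\psi_E(\lambda') = \log(1-\lambda')+\lambda'$ is exactly \eqref{eq:psi-E}, and the algebra in both substitutions is clean. The approach differs from the paper's. You treat \eqref{eq:fan} as a black box and obtain both inequalities by affine change of variables: rescale by $m$ for the first, rescale by $1-m$ and flip signs for the second. The paper instead proves the lemma ``from first principles'' (their words) by re-deriving the mechanism behind Fan's inequality: they introduce the function
\[ f(x) = \frac{\log(1+x)-x}{x^2/2} \quad (x > -1,\ x\neq 0),\qquad f(0)=-1, \]
note that it is increasing and continuous, and then for $y\geq -m$, $\lambda\in[0,1/m)$ observe $\lambda y \geq -m\lambda > -1$ and compare $f(\lambda y) \geq f(-m\lambda)$, clearing denominators to reach the bound (similarly for the second case). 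In fact the paper explicitly remarks, just before the proof, that the lemma is ``an easy generalization of inequality~\eqref{eq:fan}'' --- that easy generalization is precisely your argument, which the paper chose not to spell out, preferring the self-contained route. Your version buys brevity and reuse; the paper's version buys self-containment (and implicitly re-verifies \eqref{eq:fan} as the $m=1$ and $m=0$ limiting cases). Both are sound, and yours is the more economical given that \eqref{eq:fan} is already on the table.
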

The proof is an easy generalization of inequality~\eqref{eq:fan} by~\citet{fan_exponential_2015}, and also follows from similar observations about the subexponential function $\psi_E$ in~\citet{howard_exponential_2018,howard_uniform_2019}, but we prove it from first principles in Section~\ref{proof:extendedFan} for completeness. Using Lemma~\ref{lemma:extendedFan}, we have for $\lambda^{\LBOW+} \in [0, 1/m)$, the following lower-bound on $\ell(\lambda^{\LBOW+}, m)$,
\begin{align}
     \ell(\lambda^{\LBOW+}, m) &:= \log \left ( \prod_{i=1}^t (1 + \lambda^{\LBOW+} (X_i - m)) \right ) \nonumber \\
     &\geq \lambda^{\LBOW+}\sum_{i=1}^t (X_i - m) + \frac{\log(1-m\lambda^{\LBOW +}) + m\lambda^{\LBOW +} }{m^2}\sum_{i=1}^t (X_i - m)^2, \label{eq:LBOW+}
\end{align}
and for $\lambda^{\LBOW-} \in (-1/(1-m), 0]$, we have
\begin{align}
     \ell(\lambda^{\LBOW-}, m) &:= \log \left ( \prod_{i=1}^t (1 + \lambda^{\LBOW-} (X_i - m)) \right ) \nonumber \\
     \geq\ &\lambda^{\LBOW-}\sum_{i=1}^t (X_i - m) + \frac{\log(1+(1-m)\lambda^{\LBOW -}) - (1-m)\lambda^{\LBOW -} }{(1-m)^2}\sum_{i=1}^t (X_i - m)^2 \label{eq:LBOW-}.
\end{align}
Importantly, if $\sum_{i=1}^t(X_i - m)$ is positive, then \eqref{eq:LBOW+}  is concave, while if negative, \eqref{eq:LBOW-} is concave. Maximizing \eqref{eq:LBOW+} or \eqref{eq:LBOW-} depending on the sign of $\sum_{i=1}^t (X_i - m)$ we obtain the following ``hindsight'' choice for $\lambda^\LBOW$,
\begin{align*}
    \lambda^{\LBOW} &=
    \begin{dcases}
        \frac{\sum_{i=1}^t (X_i - m) }{m \sum_{i=1}^t (X_i - m) + \sum_{i=1}^t (X_i - m)^2 } & \text{if $\sum_{i=1}^t (X_i - m ) \geq 0$}, \\
        \frac{\sum_{i=1}^t (X_i - m) }{-(1-m) \sum_{i=1}^t (X_i - m) + \sum_{i=1}^t (X_i - m)^2 } & \text{if $\sum_{i=1}^t (X_i - m ) \leq 0$}. 
    \end{dcases} 
\end{align*}
Of course, this choice of $\lambda^\LBOW$ is not predictable and thus is not a valid betting strategy in the framework of the current paper. This motivates the following strategy, $(\lambda_t^\LBOW(m))_{t=1}^\infty$ given by
\begin{equation}
\label{eq:LBOW}
    \lambda^\LBOW_t(m) := \frac{-c}{1-m} \lor \frac{\widehat \mu_{t-1} - m}{\omega_{t-1}| \widehat \mu_{t-1} - m| + \widehat \sigma_{t-1}^2 + (\widehat \mu_{t-1} - m)^2 } \land \frac{c}{m},
\end{equation}
\[ \text{where}~~~\omega_t :=
\begin{cases}
    m & \text{if $\widehat \mu_t - m \geq 0$} ~, \\
    1-m & \text{if $\widehat \mu_t - m < 0$} ~.
\end{cases} \]
Similarly to the aGRAPA betting procedure, LBOW is computationally-inexpensive but is not guaranteed to produce an interval.
The expression also carries similar intuition to the GRAPA case.

\subsection{Online Newton Step (ONS-\texorpdfstring{$m$}{m}) }
Betting algorithms play an essential role in online learning as several optimization problems can be framed in terms of coin-betting games \citep{cutkosky2018black, orabona2017training, jun2017improved, jun2019parameter}. While the downstream application is different, the game-theoretic techniques of maximizing wealth are almost immediately applicable to the problem at hand. Here, we consider a slight modification to the Online Newton Step (ONS) algorithm due to \citet{cutkosky2018black}.\\

\begin{algorithm}[H]
\SetAlgoLined
\KwResult{$(\lambda_t^\ONS(m))_{t=1}^T$}
 $\lambda_1^\ONS(m) \gets 1$\;
 \For{$t \in \{1, \dots, T-1\}$}{
    $y_t \gets X_t - m$ \;
    Set $z_t \gets y_t / (1 - y_t\lambda^\ONS_{t}(m))$ \;
    $A_t \gets 1 + \sum_{i=1}^t z_i^2$ \;
    $\lambda_{t+1}^\ONS(m) \gets \frac{-c}{1-m} \lor \left ( \lambda^\ONS_{t}(m) - \frac{2}{2-\log(3)} \frac{z_t}{A_t}\right ) \land \frac{c}{m}$ \;
 }
\caption{ONS-$m$.}
\end{algorithm}
\vspace{0.05in}

Through simulations, we find that ONS-$m$ performs competitively. However, its lack of closed-form expression makes it a slightly more computationally-expensive alternative to aGRAPA and LBOW, but not nearly as expensive as GRAPA (see Table~\ref{tab:simulationTimes}). 

\begin{figure}
    \centering
    \includegraphics[width=\textwidth]{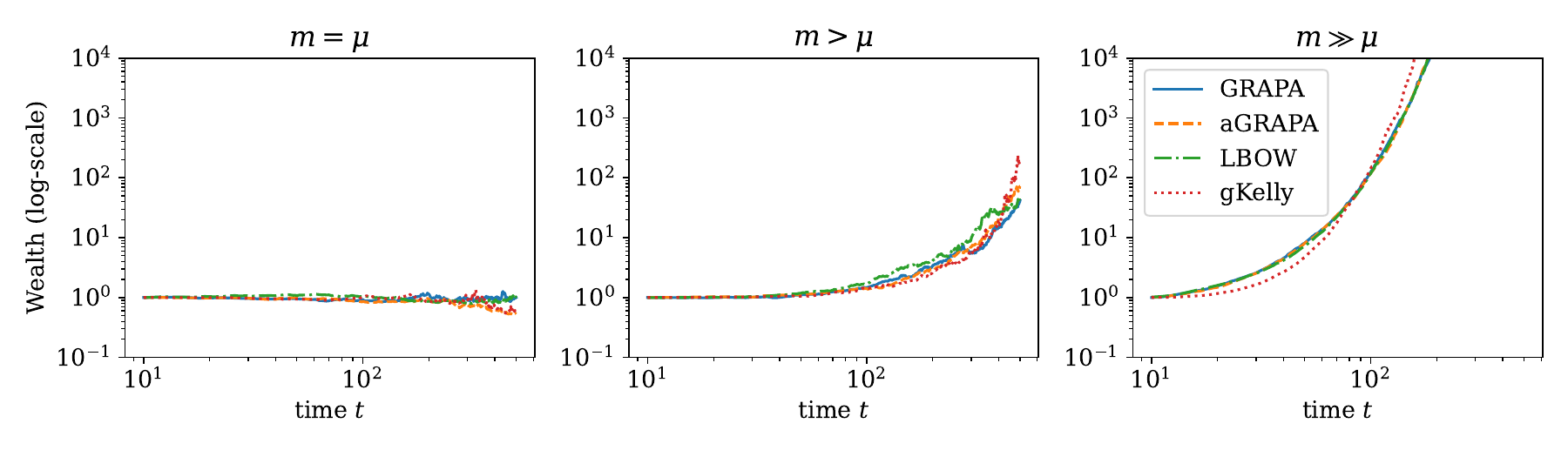}
    \caption{Comparison of the wealth process under various game-theoretic betting strategies with 100 repeats. In this example, the 1000 observations are drawn from a Beta(10, 10) distribution, and the candidate means $m$ being tested are 0.5, 0.51, and 0.55 (from left to right). Notice that these strategies perform similarly, but have varying computational costs (see Table~\ref{tab:simulationTimes}).}
    \label{fig:gameTheoreticBettingStrategiesCompared}
\end{figure}

\subsection{Diversified Kelly betting (dKelly)}
\label{section:dKelly}

Instead of committing to one betting strategy such as aGRAPA or LBOW, we can simply take the average capital among $D$ separate strategies. This follows from the fact that an average of test martingales is itself a test martingale. That is, if $(\lambda_t^1)_{t=1}^\infty, (\lambda_t^2)_{t=1}^\infty, \dots, (\lambda_t^D)_{t=1}^\infty$ are $D$ separate betting strategies, then
\[ \Kcal_t^\dKelly(\mu) := \frac{1}{D}\sum_{d=1}^D \prod_{i=1}^t \left ( 1 + \lambda_i^d(\mu) (X_i - \mu) \right ) \]
forms a test martingale. Following Kelly's original motivation to maximize (expected) log-capital, notice that by Jensen's inequality,
\[ \log \left ( \Kcal_t^\dKelly \right ) > \frac{1}{D} \sum_{d=1}^D \log \left ( \prod_{i=1}^t \left (1 + \lambda_i^d(\mu)(X_i - \mu) \right ) \right).
\]
In other words, the log-capital of the diversified bets is strictly larger than the average log-capital among the diverse candidate bets.

\paragraph{Grid Kelly betting (gKelly).}
While it is possible to use any finite collection of strategies, we focus our attention on a particularly simple (and useful) example where the bets are constant values on a grid. Specifically, divide the interval $[-1/(1-m), 1/m]$ up into $G$ evenly-spaced points $\lambda^1, \dots, \lambda^G$.
Then define the $\gKelly$ capital process $\Kcal_t^\gKelly$ by 
\[ \Kcal_t^\gKelly(m) := \frac{1}{G} \sum_{g=1}^G \prod_{i=1}^t \left ( 1 + \lambda^g(X_i - m) \right ). \]
When used to construct confidence sequences for $\mu$, $\Kcal_t^\gKelly$ demonstrates excellent empirical performance. Moreover, this procedure can be slightly modified into ``Hedged gKelly'' (hgKelly) so that confidence sequences constructed using gKelly are intervals almost surely. 

In order to mimic the unknown optimal $\lambda^*$, $D$ or $G$ should not be kept constant, but itself grow slowly (say logarithmically) with $t$. In game-theoretic terms, one should  slowly add more strategies to the portfolio, in order to asymptotically match the performance of the optimal one over time. (When adding a new $\lambda^g$ to an existing mixture, it obviously only begins to contribute to the wealth from the following step onwards; formally $G$ would be replaced by $G_t$, and $\prod_{i=1}^t (1 + \lambda^g(X_i - m)$ would be replaced by $\prod_{i=t_g}^t (1 + \lambda^g(X_i - m)$ if $\lambda^g$ was first introduced after $t_g-1$ steps.)

\paragraph{Hedged gKelly.}
First, divide the interval $[-1/(1-m), 0]$ and $[0, 1/m]$ into $G$ evenly-spaced points: $(\lambda^{1-}, \dots, \lambda^{G-})$ and $(\lambda^{1+}, \dots, \lambda^{G+})$, respectively. Then define the ``Hedged grid Kelly capital process'' $\Kcal_t^\hgKelly$ given by 
\[ \Kcal_t^\hgKelly(m) := \frac{\theta}{G} \sum_{g=1}^G \prod_{i=1}^t \left ( 1 + \lambda^{g+} (X_i - m) \right ) +  \frac{1-\theta}{G} \sum_{g=1}^G \prod_{i=1}^t \left ( 1 + \lambda^{g-} (X_i - m) \right ), \]
where $\theta \in [0, 1]$ (a reasonable default being $\theta = 1/2$).

\begin{proposition}
\label{proposition:hedgedKellyInterval}
    If $(X_t)_{t=1}^\infty \sim P$ for some $P \in \Pcal^\mu$, then $\Kcal_t^\hgKelly(\mu)$ forms a test martingale and
    \( \BI^\hgKelly_t := \left \{ m \in [0, 1] : \Kcal_t^\hgKelly(m) < 1/\alpha \right \} \) is a CS for $\mu$ that
    forms an interval for each $t \geq 1$.
\end{proposition}
The proof in Section~\ref{proof:hedgedKellyInterval} proceeds by showing that $\Kcal_t^\hgKelly$ is a convex function of $m$ and hence its sublevel sets are intervals.

\subsection{Confidence Boundary (ConBo)}

The aforementioned strategies benefit from targeting bets against a particular null hypothesis, $H_0^m$ for each $m \in [0, 1]$, but this has the drawback of $\Kcal_t(m)$ potentially not being quasiconvex in $m$. One of the advantages of the hedged capital process as described in Theorem~\ref{theorem:hedgedCS} is that $\Kcal_t^\pm(m)$ is always quasiconvex, and thus its sublevel sets (and hence the confidence sets $\BI_t^\pm$) are intervals. 

In an effort to develop game-theoretic betting strategies which generate confidence sets which are intervals, we present the Confidence Boundary (ConBo) bets. Rather than bet against the null hypotheses $H_0^m$ for each $m \in [0, 1]$, consider two sequences of nulls, $(H_0^{u_t})_{t=1}^\infty$ and $(H_0^{l_t})_{t=1}^\infty$ corresponding to upper and lower confidence boundaries, respectively. The ConBo bet $\lambda_{t}^\CB$ is then targeted against $u_{t-1}$ and $l_{t-1}$ using \textit{any} game-theoretic betting strategy (e.g. $\ast$GRAPA, $\ast$Kelly, LBOW, or ONS-$m$). Letting $\lambda^\G_t(m)$ be any such strategy, we summarize the ConBo betting scheme in Algorithm~\ref{algorithm:ConBo}.

\begin{corollary}[Confidence boundary CS \textbf{[ConBo]}]
\label{corollary:ConBo}
In Algorithm~\ref{algorithm:ConBo},
\[ \BI_t^\CB ~~~ \text{forms a $(1-\alpha)$-CS for $\mu$}, \]
as does $\bigcap_{i\leq t} \BI_i^\CB$. Further, $\BI_t^\CB$ is an interval for any $t \geq 1$.
\end{corollary}

\begin{algorithm}[h!]
\SetAlgoLined
\KwResult{$(\Kcal_t^\CB(m))_{t=1}^T$}
 $l_0 \gets 0\ ;\ u_0 \gets 1$\;
 $\Kcal_0^{\CB+}(m) \gets \Kcal_0^{\CB-}(m) \gets 1$\;
 \For{$t \in \{1, \dots, T\}$}{
    $\lambda_t^{\CB+} \gets \max \left \{\lambda^\G_t(l_{t-1}) , 0 \right \} \land c/m$\tcp*{Compute ConBo bets}
    $\lambda_t^{\CB-} \gets \left |\min \left \{ \lambda^\G_t(u_{t-1}), 0 \right \} \right | \land c/(1-m)$\; 
    $\Kcal_t^{\CB+}(m) \gets \left [1 + \lambda_t^{\CB+}(X_t - m) \right ] \cdot \Kcal^{\CB+}_{t-1}(m)$\tcp*{Update capital}
    $\Kcal_t^{\CB-}(m) \gets \left [1 - \lambda_t^{\CB-}(X_t - m) \right ] \cdot \Kcal^{\CB-}_{t-1}(m)$\; 
    $\Kcal_t^\CB(m) \gets \max \left \{ \theta \Kcal^{\CB+}_t(m), (1-\theta) \Kcal^{\CB-}_t(m) \right \}$\tcp*{Hedging}
    $\BI^\CB_t \gets \{ m\in [0, 1] : \Kcal_t(m) < 1/\alpha \} $ \ \; 
    $l_t \gets \inf \BI_t^\CB$\tcp*{Update confidence boundaries to bet against}
    $u_t \gets \sup \BI_t^\CB$\;
 }
\caption{ConBo}
\label{algorithm:ConBo}
\end{algorithm}
\vspace{0.05in}

We can also adapt the ConBo betting scheme outlined in Algorithm~\ref{algorithm:ConBo} to the without-replacement setting by replacing $m$ by $m_t^\WoR$ for each time $t$.

\begin{corollary}[WoR confidence boundary CS {\textbf{[ConBo-WoR]}}]
\label{corollary:ConBoWoR}
Under the same conditions as Theorem~\ref{theorem:hedgedCSWoR}, define $\lambda_t^{\CBWoR+}$ and $\lambda_t^{\CBWoR-}$ as in Algorithm~\ref{algorithm:ConBo} but with $m$ replaced by $m_t^\WoR$. Then,
\[ \BI_t^{\CBWoR} := \left \{ m \in [0, 1] : \Kcal_t^{\CBWoR} < 1/\alpha \right \} ~~~\text{forms a $(1-\alpha)$-CS for $\mu$,}\]
as does $\bigcap_{i\leq t} \BI_i^{\CBWoR}$. Further, $\BI_t^{\CBWoR}$ is an interval for each $t \geq 1$.
\end{corollary}

\subsection{Sequentially Rebalanced Portfolio (SRP)}

Implicitly, none of the aforementioned strategies take advantage of ``rebalancing'', meaning the ability to take ones capital $\Kcal_{t}$ at time $t$, diversify it in any manner at time $t+1$, and repeat. This has had the mathematical advantage of being able to write the resulting capital process $(\Kcal_{t}(m))_{t=1}^{\infty}$ in the following general, but closed-form expression:
\[ \Kcal_{t}(m) := \sum_{d=1}^{D}\theta_d \prod_{i=1}^{t} (1 + \lambda_{i}^{d}(m) \cdot (X_{i} - m)), \]
where $D \geq 1$ is as in Section~\ref{section:dKelly}, $(\lambda_{t}^{1}(m))_{t=1}^{\infty}, \dots, (\lambda_{t}^{D}(m))_{t=1}^{\infty}$ are $[-1/(1-m), 1/m]$-valued predictable sequences as usual, and $(\theta_d)_{d=1}^D$ are convex weights such that $\sum_{d=1}^D \theta_d = 1$.
However, a more general capital process martingale can be written but instead of having a closed-form product expression, it can be written recursively as
\begin{equation}
  \label{eq:SRP}
  \Kcal_t^\SRP(m) := \sum_{d=1}^{D_t} (1 + \lambda_t^d(m)\cdot (X_t - m)) \cdot \theta_t^d \cdot \Kcal_{t-1}^\SRP(m),
\end{equation}
where $(\lambda_t^d)_{d=1}^{D_t}$ are $[1/(1-m), 1/m]$-valued predictable bets, $(\theta_t^d)_{d=1}^{D_t}$ are predictable convex weights that sum to 1 (conditional on $X_1^{t-1}$), and we have set the initial capital $\Kcal_0^\SRP(m)$ to 1 as usual.

Adopting the betting interpretation, \eqref{eq:SRP} is a rather intuitive procedure. At each time step $t$, the gambler divides their previous capital $\Kcal_{t-1}^\SRP(m)$ up into $D_t \geq 1$ portions given by $\theta_t^1 \cdot K_{t-1}^\SRP(m), \dots, \theta_t^{D_t} \cdot \Kcal_{t-1}^\SRP(m)$, then invests these wealths with bets $\lambda_t^1(m), \dots, \lambda_t^{D_t}(m)$, respectively. The gambler's wealths are then updated to \[ (1+\lambda_t^1(m)\cdot (X_t - m)) \cdot \theta_t^1\cdot \Kcal_{t-1}^\SRP(m), \dots, (1+\lambda_t^{D_t}(m) \cdot (X_t - m)) \cdot \theta_t^{D_t}\cdot \Kcal_{t-1}^\SRP(m), \]
which are then combined via summation to yield a final capital of \eqref{eq:SRP}.

It is now routine to check that the process given by \eqref{eq:SRP} is a nonnegative martingale when evaluated at $\mu$ since
\begin{align*}
  \EE \left ( \Kcal_t^\SRP(\mu) \mid X_1^{t-1} \right ) &= \sum_{d=1}^{D_t} \Kcal_{t-1}^\SRP(\mu) \cdot \theta_t^{D_t} \cdot \left ( 1 + \lambda_t(\mu) \left ( \underbrace{\EE(X_t \mid X_1^{t-1}) - \mu}_{=0} \right ) \right ) \\
                                                        &= \Kcal_{t-1}^\SRP(\mu)\underbrace{\sum_{d=1}^{D_t} \theta_t^{D_t}}_{=1}
                                                        = \Kcal_{t-1}^\SRP(\mu).
\end{align*}
Note that SRP is the most general and customizable betting strategy presented in this paper, since it can be composed of any of the previously discussed strategies, and includes each of them as a special case.


\section{Simulations}
\label{section:simulations}
This section contains a comprehensive set of simulations comparing our new confidence sets presented  against previous works. We present simulations for building both time-uniform CSs and fixed-time CIs with or without replacement. Each of these are presented under four distributional ``themes'': (1) discrete, high-variance; (2) discrete, low-variance; (3) real-valued, evenly spread; and (4) real-valued, concentrated.

\clearpage
\subsection{Time-uniform confidence sequences (with replacement)}

\begin{figure}[h!]
 \centering
    \includegraphics[width=\textwidth]{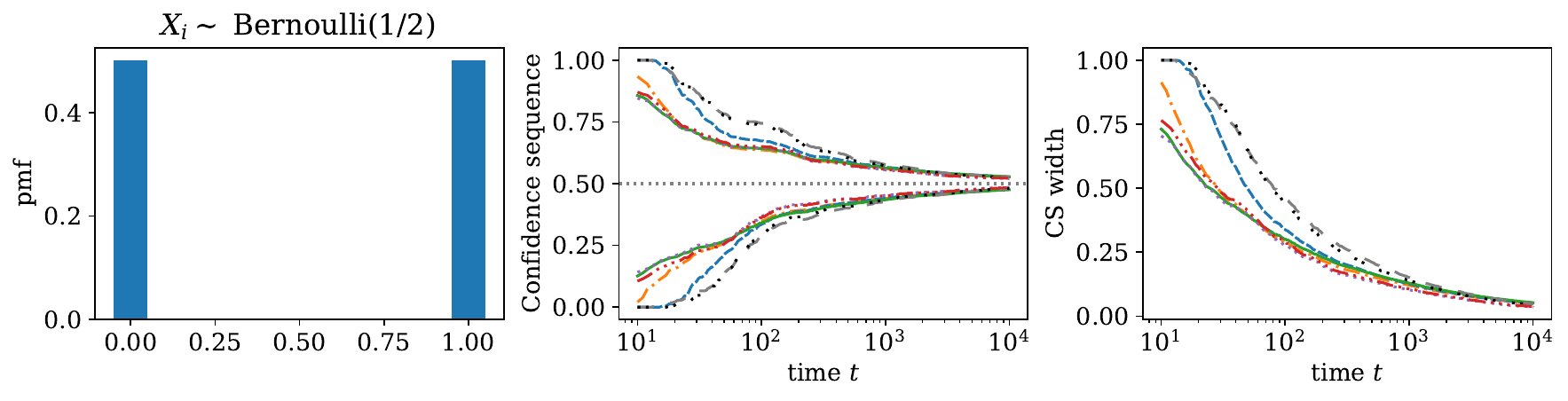}
    \centering
    \includegraphics[width=\textwidth]{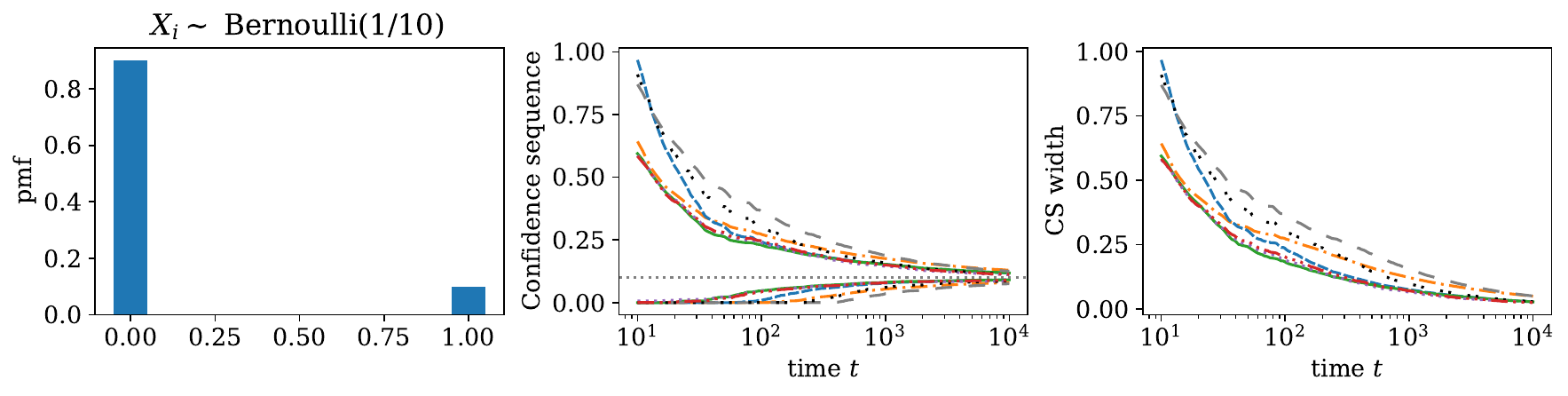}
    \includegraphics[width=\textwidth]{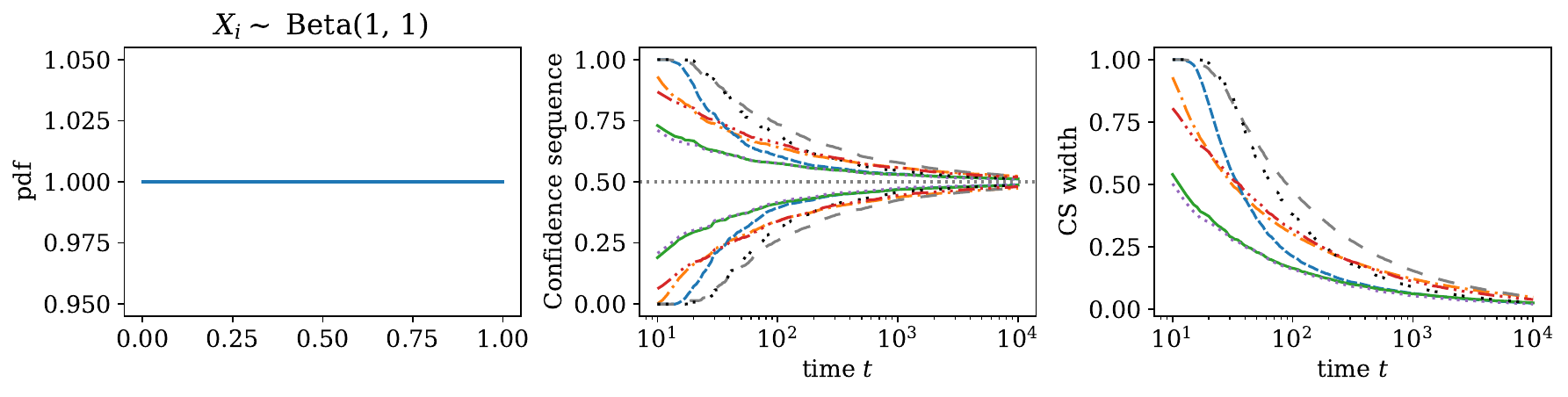}
    \centering
    \includegraphics[width=\textwidth]{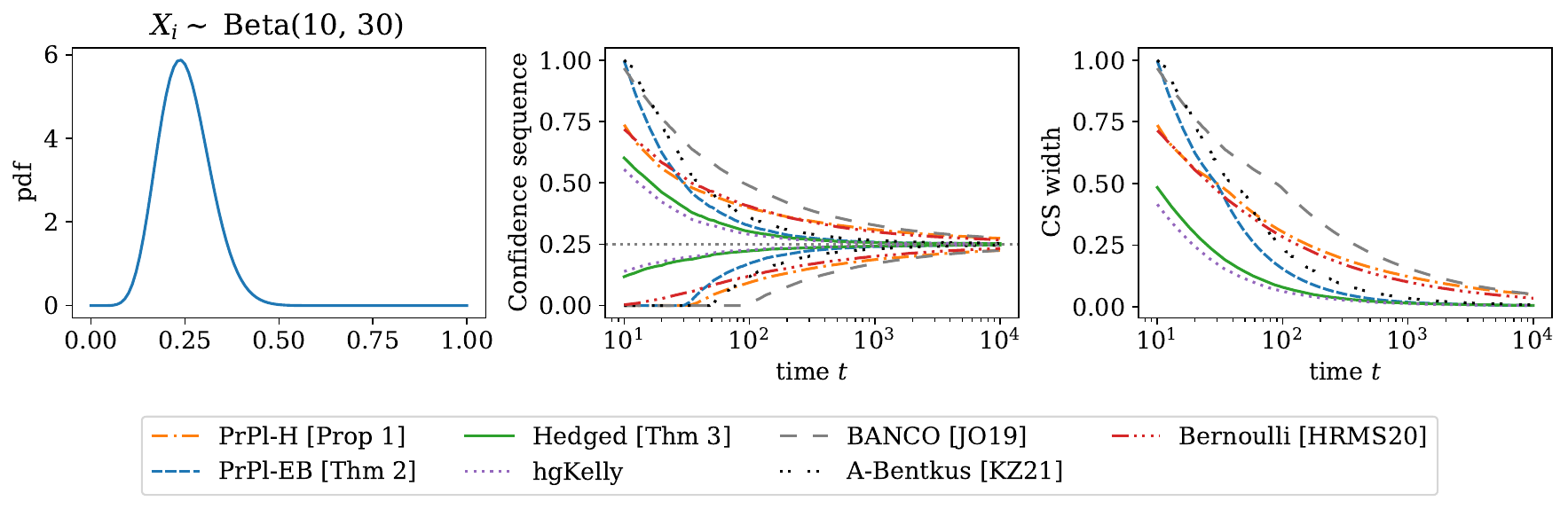}
 \caption{Comparing Hedged, hgKelly, PrPl-EB, and PrPl-H CSs alongside other time-uniform confidence sequences in the literature; further details in Section~\ref{section:simDetails-TUWR}. Clearly, the betting approach is dominant in all settings.}
 \label{fig:TUWR}
\end{figure}


\clearpage
\subsection{Fixed-time confidence intervals (with replacement)}

\begin{figure}[!htbp]
 \centering
    \includegraphics[width=\textwidth]{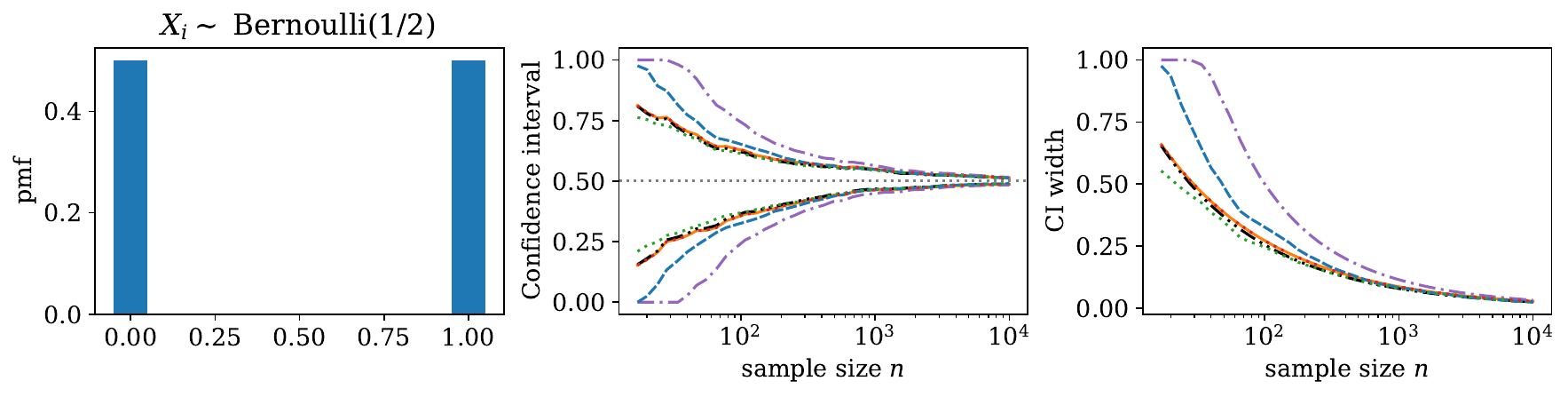}
 \hfill
    \includegraphics[width=\textwidth]{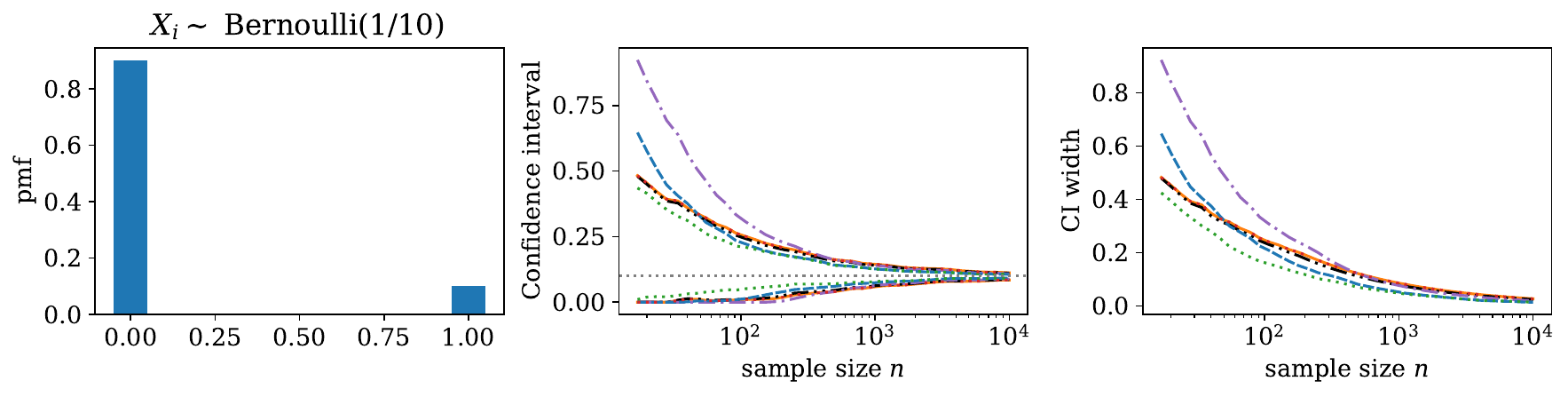}
    \includegraphics[width=\textwidth]{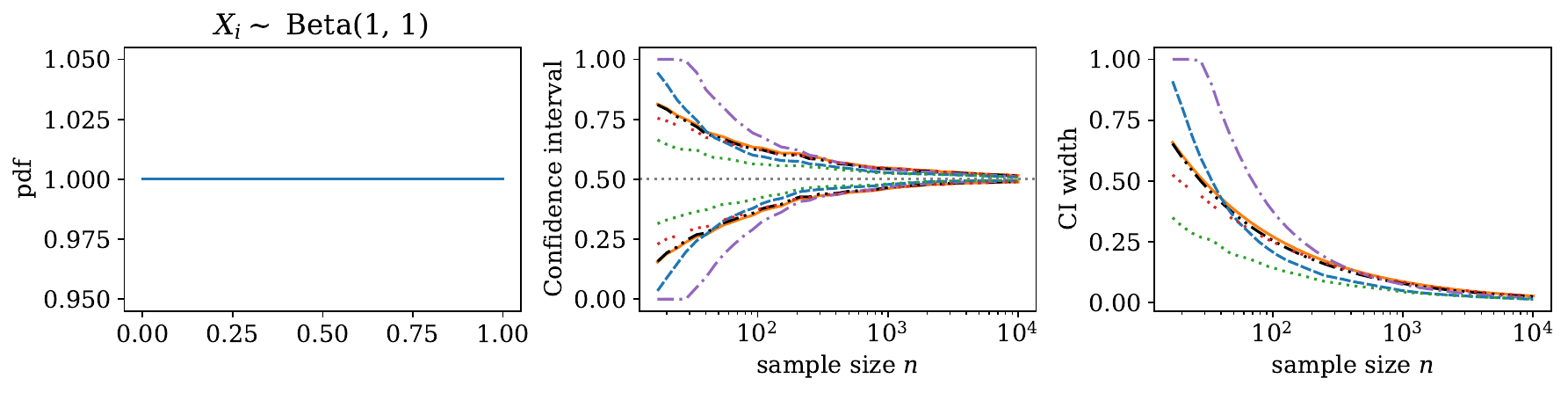}
 \hfill
    \includegraphics[width=\textwidth]{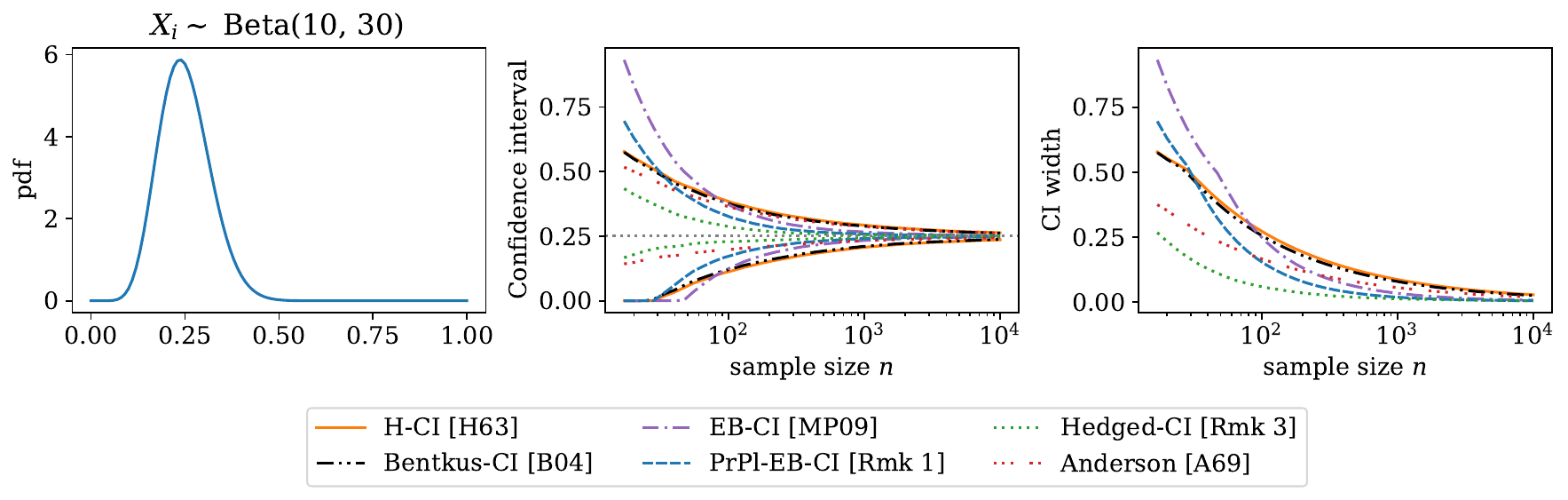}
 \caption{Hedged capital, Anderson, Bentkus, Maurer-Pontil empirical Bernstein, and predictable plug-in empirical Bernstein CIs under four distributional scenarios. Further details can be found in Section~\ref{section:simDetails-FTWR}. Clearly, the betting approach is dominant in all settings.}
 \label{fig:FTWR}
\end{figure}


\clearpage
\subsection{Time-uniform confidence sequences (without replacement)}

\begin{figure}[!htbp]

 \centering
    \includegraphics[width=\textwidth]{fig/WoR_Time-uniform/Bernoulli_0.5__WoR_time-uniform.pdf}
 \hfill
    \includegraphics[width=\textwidth]{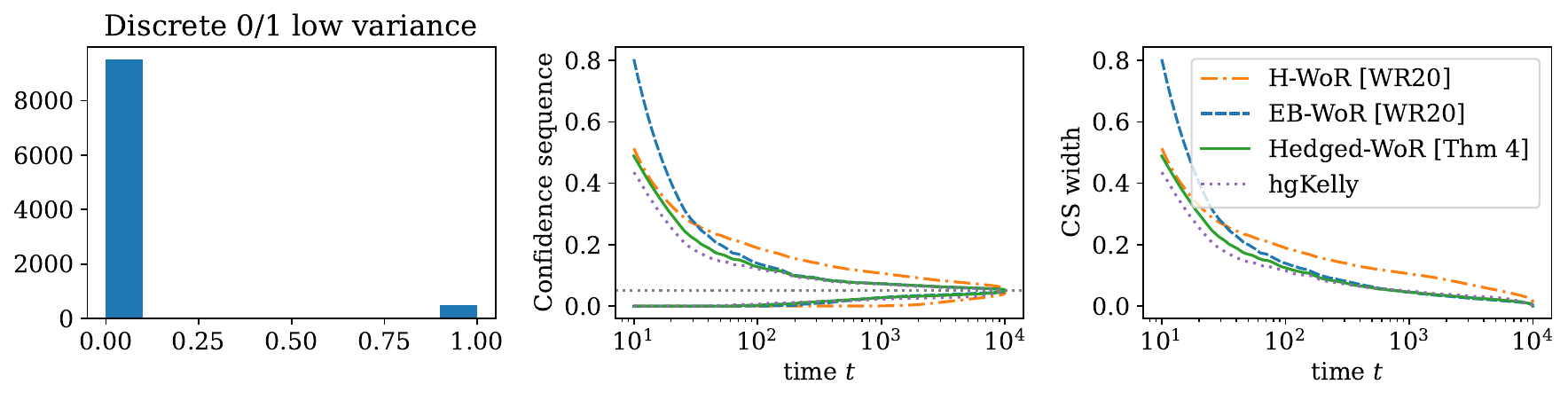}
    \centering
    \includegraphics[width=\textwidth]{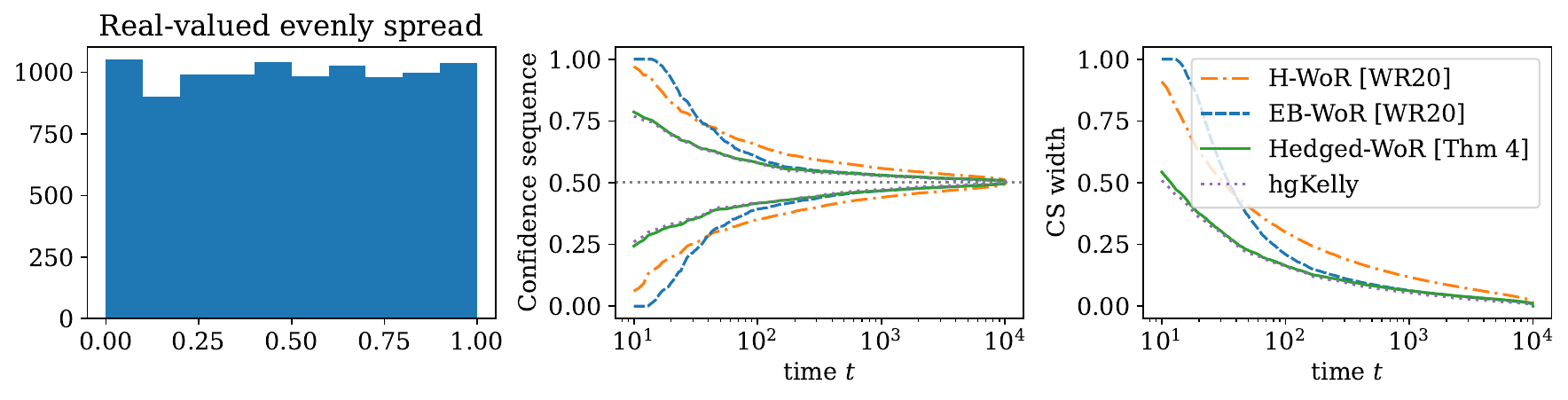}
 \hfill
    \includegraphics[width=\textwidth]{fig/WoR_Time-uniform/Beta_10,_30__WoR_time-uniform.pdf}
 \caption{Hedged capital, Hoeffding, and empirical Bernstein CSs for the mean of a finite set of bounded numbers when sampling WoR. Further details can be found in Section~\ref{section:simDetails-TUWoR}. Clearly, the betting approach is dominant in all settings.}
 \label{fig:TUWoR}
\end{figure}


\clearpage
\subsection{Fixed-time confidence intervals (without replacement)}

\begin{figure}[!htbp]
 \centering
    \includegraphics[width=\textwidth]{fig/WoR_Fixed-time/Bernoulli_0.5__WoR_fixed-time.pdf}
    \includegraphics[width=\textwidth]{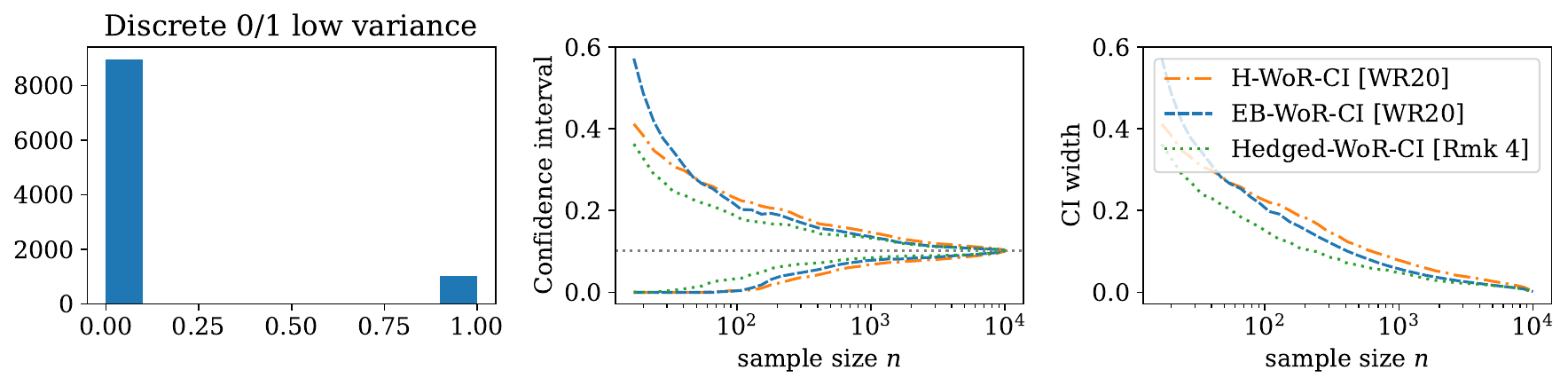}
    \includegraphics[width=\textwidth]{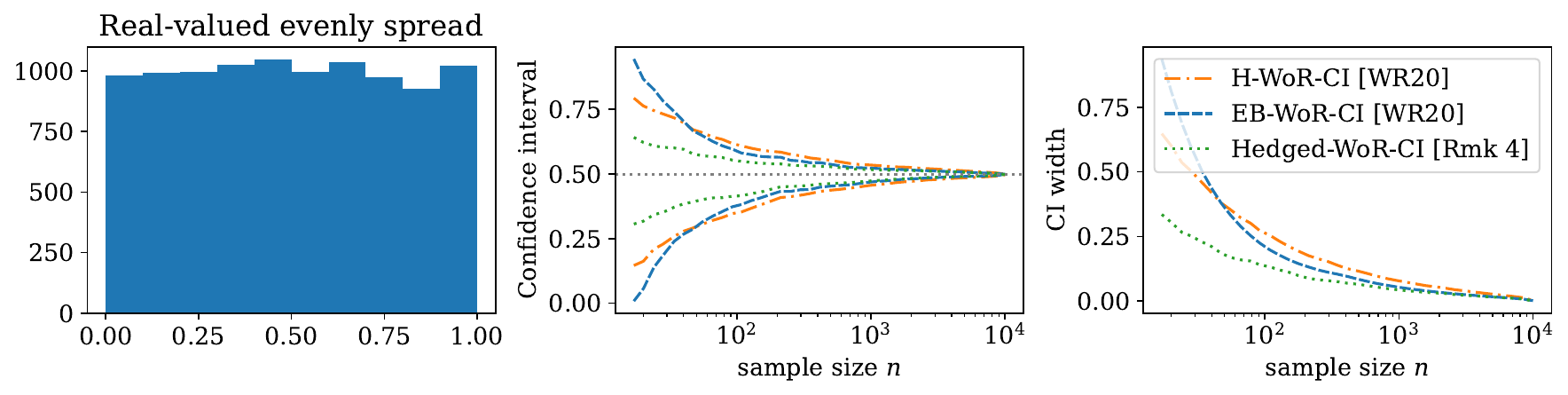}
    \includegraphics[width=\textwidth]{fig/WoR_Fixed-time/Beta_10,_30__WoR_fixed-time.pdf}
  \caption{Fixed-time hedged capital, Hoeffding-type, and empirical Bernstein-type CIs for the mean of a finite set of bounded numbers when sampling WoR. Further details can be found in Section~\ref{section:simDetails-FTWoR}. Clearly, the two betting approaches (Hedged and ConBo) are dominant in all settings.}
 \label{fig:FTWoR}
\end{figure}

\newpage
\section{Simulation details} 

In each simulation containing confidence sequences or intervals and their widths, we took an average over 5 random draws from the relevant distribution. For example, in the ``Time-uniform confidence sequences'' plot of Figure~\ref{fig:comparisonAll}, the CSs (PrPl-H, PrPl-EB, and Hedged) were averaged over 5 random draws from a Beta$(10, 30)$ distribution. Computation times for various strategies are given in Table~\ref{tab:simulationTimes}.

\begin{table}
\caption{\label{tab:simulationTimes}Typical computation time for constructing a CS from time $1$ to $10^3$ for the mean of Bernoulli$(1/2)$-distributed random variables. The three betting CSs were computed for 1000 evenly-spaced values of $m$ in $[0, 1]$, while a coarser grid would have sped up computation. All CSs were calculated on a laptop powered by a quad-core 2GHz 10th generation Intel Core i5. Parallelization was carried out using the Python library, \texttt{multiprocess} \citep{mckerns2012building}.}
\centering
    \fbox{%
    \begin{tabular}{*{4}{c}}
    \Centering Betting scheme & \Centering Interval (a.s.) & \Centering Computation time  (seconds) \\
    \hline
    ConBo+LBOW & \checkmark & 0.08 \\
    Hedged+$(\lambda_t^{\PMpm})_{t=1}^\infty$ & \checkmark & 0.25 \\
    hgKelly ($G=20$) & \checkmark & 1.38 \\
    aGRAPA & & 0.35 \\
    LBOW & & 0.25 \\
    ONS-$m$ & & 12.45 \\
    Kelly & & 197.38 \\
    \end{tabular}}
\end{table}

\label{sec:app-simulations}


\subsection{Time-uniform confidence sequences (with replacement)}
\label{section:simDetails-TUWR}
Each of the CSs considered in the time-uniform (with replacement) case are presented as explicit theorems and propositions throughout the paper. Specifically,
\begin{itemize}
    \item \textbf{PrPl-H}: Predictable plug-in Hoeffding (Proposition~\ref{proposition:predmixHoeffding});
    \item \textbf{PrPl-EB}: Predictable plug-in empirical Bernstein (Theorem~\ref{theorem:EBCS});
    \item \textbf{Hedged}: Hedged capital process (Theorem~\ref{theorem:hedgedCS}); and
    \item \textbf{hgKelly}: Hedged grid-Kelly (Proposition~\ref{proposition:hedgedKellyInterval}).
\end{itemize}

\paragraph{Bernoulli [HRMS20]}Section~\ref{section:simulations} compared these against the conjugate mixture sub-Bernoulli confidence sequence by \citet{howard_uniform_2019}, recalled below.

\citet[Equation (3.4)]{hoeffding_probability_1963}, presented the sub-Bernoulli upper-bound on the moment generating function of bounded random variables for any $\lambda > 0$:
\[ \EE_P\left ( \exp\left \{ \lambda (X_i - \mu) \right \} \right ) \leq 1 - \mu + \mu \exp\{\lambda\}, \]
which can be used to construct an $e$-value by noting that 
\[\EE_P \left ( \exp \left \{ \lambda (X_i - \mu) - \log(1 - \mu + \mu e^\lambda) \right \} \mid \Fcal_{i-1}\right ) \leq 1.\]
Then, \citet{howard_uniform_2019} showed that the cumulative product process
\begin{equation}
\label{eq:subBernoulliSuperMG}
\prod_{i=1}^t\left ( \exp \left \{ \lambda (X_i - \mu) - \log(1 - \mu + \mu e^\lambda) \right \} \right ) 
\end{equation}
forms a test supermartingale, as does a mixture of \eqref{eq:subBernoulliSuperMG} for any probability distribution $F(\lambda)$ on $\RR^+$:
\begin{equation}
\label{eq:subBernoulliMixtureSuperMG}
\int_{\lambda \in \RR^+} \prod_{i=1}^t\left ( \exp \left \{ \lambda X_i - \log(1 - \mu + \mu e^\lambda) \right \} \right ) dF(\lambda).
\end{equation}
In particular, \citet{howard_uniform_2019} take $F(\lambda)$ to be a beta distribution so that the integral \eqref{eq:subBernoulliMixtureSuperMG} can be computed in closed-form. Using \eqref{eq:subBernoulliMixtureSuperMG}  in Step (b) in Theorem~\ref{theorem:4step} yields the ``Bernoulli [HRMS20]'' confidence sequence.

There are yet other improvements of Hoeffding's inequality, for example one that goes by the name of Kearns-Saul \citep{kearns_large_1998} but was incidentally noted in Hoeffding's original paper itself. This inequality, and other variants, are looser than the sub-Bernoulli bound and so we exclude them here; see \citet{howard_exponential_2018} for more details. Most importantly, none of these adapt to the true underlying variance of the random variables, unlike most of our new techniques.

\paragraph{A-Bentkus [KZ21]} 

We also compared our bounds against the ``adaptive Bentkus confidence sequence'' (A-Bentkus) due to \citet[Section~3.5]{kuchibhotla2020near}. These combine a maximal version of \citeauthor{bentkus2006domination}'s concentration inequality \cite[Theorem~1]{kuchibhotla2020near} with the ``stitching'' technique \cite{zhao_adaptive_2016, mnih_empirical_2008, howard_uniform_2019} --- a method to obtain infinite-horizon concentration inequalities by taking a union bound over exponentially-spaced \emph{finite} time horizons. 

\subsection{Fixed-time confidence intervals (with replacement)}
\label{section:simDetails-FTWR}
For the fixed-time CIs included from this paper, we have 
\begin{itemize}
    \item \textbf{PrPl-EB-CI}: Predictable plug-in empirical Bernstein CI (Remark~\ref{remark:EBCI}); and
    \item \textbf{Hedged-CI}: Hedged capital process CI (Remark~\ref{remark:hedgedCI}).
\end{itemize}
These were compared against CIs due to \citet{hoeffding_probability_1963}, \citet{maurer_empirical_2009}, \citet{anderson1969confidence}, and \citet{bentkus2004hoeffding} which we now recall.

\paragraph{H-CI [H63]}
These intervals refer to the CIs based on Hoeffding's classical concentration inequalities \citep{hoeffding_probability_1963}. Specifically, for a sample size $n \geq 1$, ``H-CI [H63]'' refers to the CI, 
\[ \frac{1}{n} \sum_{i=1}^n X_i \pm \sqrt{\frac{\log(2/\alpha)}{2n}}. \]

\paragraph{Anderson [A69]}
These intervals refer to the confidence intervals due to \citet{anderson1969confidence} which take a unique approach by considering the entire sample cumulative distribution function, rather than just the mean and variance. Consequently, however, Anderson's CIs require iid observations, rather than the more general setup we consider. We nevertheless find that even in the iid setting, our approach outperforms Anderson's.

Suppose $X_1, \dots, X_n \overset{iid}{\sim} P$ are $[0, 1]$-bounded with mean $\EE_P (X_1) = \mu$. Let $X_{(1)}, \dots, X_{(n)}$ denote the order statistics of $X_1^n$ with the convention that $X_{(0)} := 0$ and $X_{(n+1)} := 1$. Following the notation of \citet{learned2019new}, Anderson's CI is given by 
\[ \left [ \sum_{i=1}^n u_i^\mathrm{DKW}\left (-X_{(n-(i+1))} + X_{(n-i)} \right ),\ 1 - \sum_{i=1}^n u_i^\mathrm{DKW}\left (  X_{(i+1)} - X_{(i)} \right ) \right ], \]
where $u_i^\mathrm{DKW} = \left ( i / n - \sqrt{\log (2/\alpha) / 2n} \right ) \lor 0$. \citet[Theorem 2]{learned2019new} show that Anderson's CI is always tighter than Hoeffding's. The authors also introduce a bound which is strictly tighter than Anderson's which they conjecture has valid $(1-\alpha)$-coverage, but we do not compare to this bound here.

\paragraph{EB-CI [MP09]}
The empirical Bernstein CI of \citet{maurer_empirical_2009} is given by
\[ \frac{1}{n} \sum_{i=1}^n X_i \pm \sqrt{\frac{2\widehat \sigma^2 \log(4/\alpha)}{n}} + \frac{7 \log(4/\alpha)}{3(n-1)},\]
and $\widehat \sigma^2$ is the sample variance.

\paragraph{Bentkus-CI [B04]}
Bentkus' confidence interval requires an a-priori upper bound on $\Var(X_i)$ for each $i$. As alluded to in the introduction, we do not consider concentration bounds which require knowledge of the variance. However, since we assume $X_i \in [0, 1]$, we have the trivial upper bound, $\Var(X_i) \leq \tfrac{1}{4}$, which we implicitly use throughout our computation of Bentkus' confidence interval.

Define the independent, mean-zero random variables $(G_i)_{i=1}^n$ as 
\[ G_i := 
\begin{cases}
-\frac{1}{4} & \text{w.p. } \frac{4}{5} \\
1 & \text{w.p. } \frac{1}{5}
\end{cases} \quad , \]
an important technical device which has appeared in seminal works by \citet[Equation (2.14)]{hoeffding_probability_1963} and \citet[Equation (10)]{bennett_probability_1962}.
Then the ``Bentkus-CI'' is 
\[ \frac{1}{n} \sum_{i=1}^n X_i \pm \frac{W_\alpha^\star}{n}, \]
where $W_\alpha^\star \in [0, n]$ is given by the value of $W_\alpha$ such that
\[ \inf_{y \in [0, n]\ :\ y \leq W_\alpha} \frac{\EE \left[ \sum_{i=1}^n (G_i - y)_+^2 \right ]}{(W_\alpha - y)_+^2} = \alpha. \] 
Efficient algorithms have been developed to solve the above \citep[Section~9]{bentkus2006domination}, \citep{kuchibhotla2020near}.

\paragraph{PTL-$\ell_2$ [PTL21]}
The work by \citet{phan2021towards} proposes an interesting but computationally intensive approach to constructing confidence intervals for means of iid bounded random variables. Specifically, we will focus on their tightest bound (according to \cite[Figure 4]{phan2021towards}) which makes use of the $\ell_2$ norm in its derivation (and which we thus refer to as PTL-$\ell_2$).

For example, computing PTL-$\ell_2$ confidence intervals\footnote{We used code  by \citet{phan2021towards} with their default tuning parameters, available at \href{https://github.com/myphan9/small_sample_mean_bounds}{github.com/myphan9/small\_sample\_mean\_bounds}.} from a sample $X_1, \dots, X_{300} \sim \mathrm{Unif}[0, 1]$ of $n=300$ uniformly distributed random variables took upwards of 11 minutes while our betting confidence interval (Remark~\ref{remark:hedgedCI}) took less than 0.5 seconds. For this reason, we conduct a small-scale simulation of sample sizes 5-200 (see Figure~\ref{fig:PTL}). We find that PTL-$\ell_2$ performs extremely well for the low-variance continuous distribution Beta(10, 30) but poorly for sample sizes closer to 200 for Bernoulli data. Nevertheless, PTL-$\ell_2$ requires i.i.d. data (while we only require boundedness and conditional mean $\mu$) and PTL-$\ell_2$ does not have time-uniform or without-replacement analogues. 

\begin{figure}[!htbp]

 \centering
    \includegraphics[width=\textwidth]{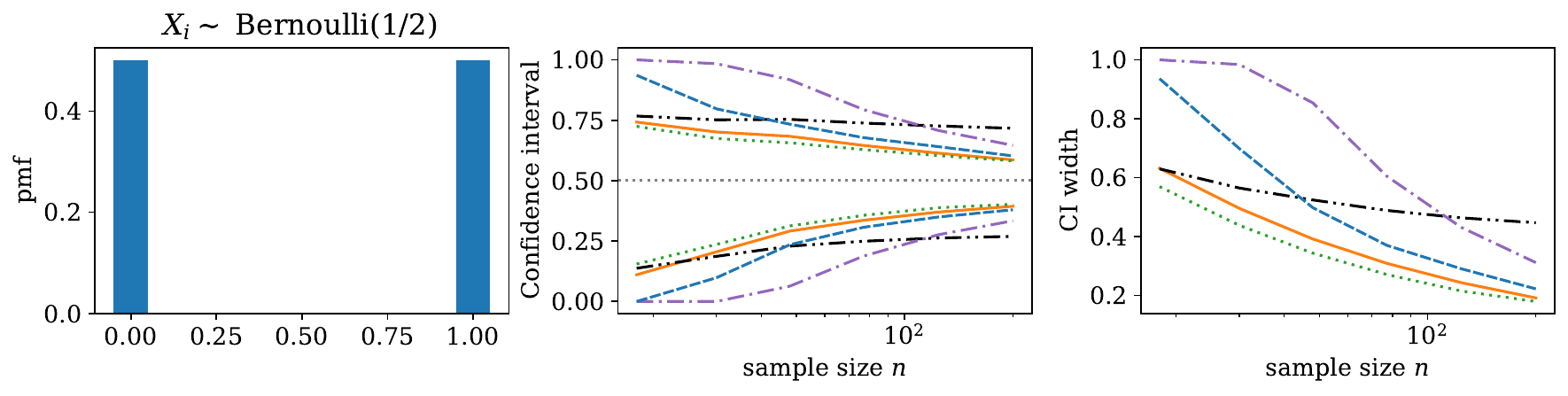}
 \hfill
    \includegraphics[width=\textwidth]{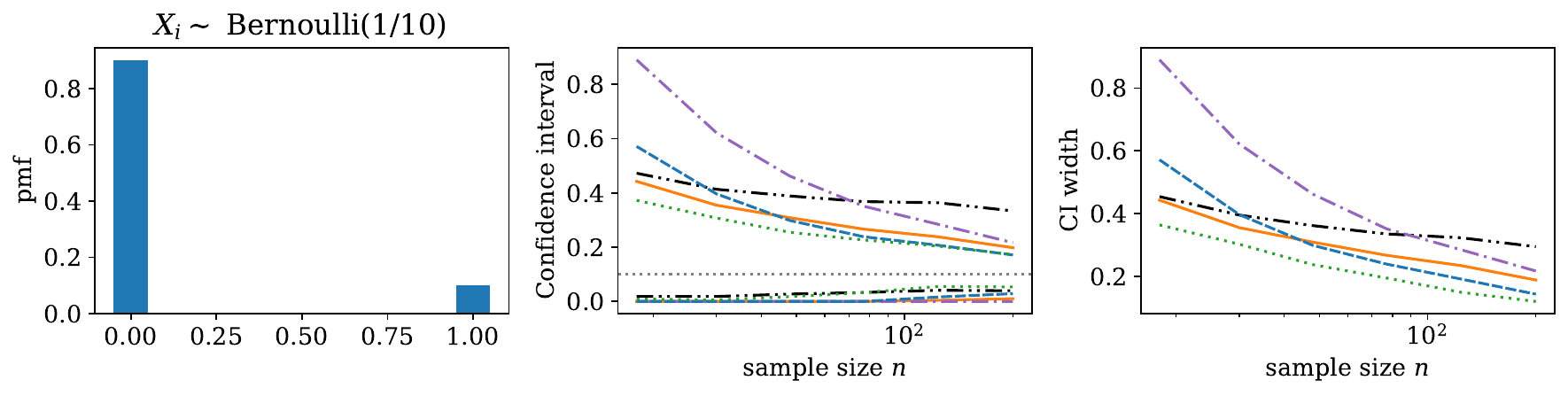}
    \centering
    \includegraphics[width=\textwidth]{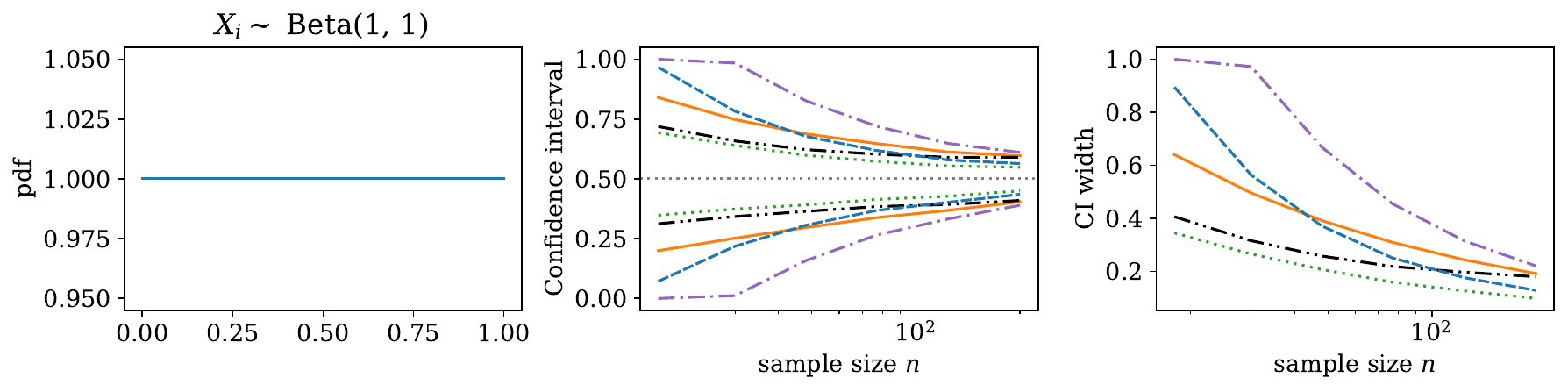}
 \hfill
    \includegraphics[width=\textwidth]{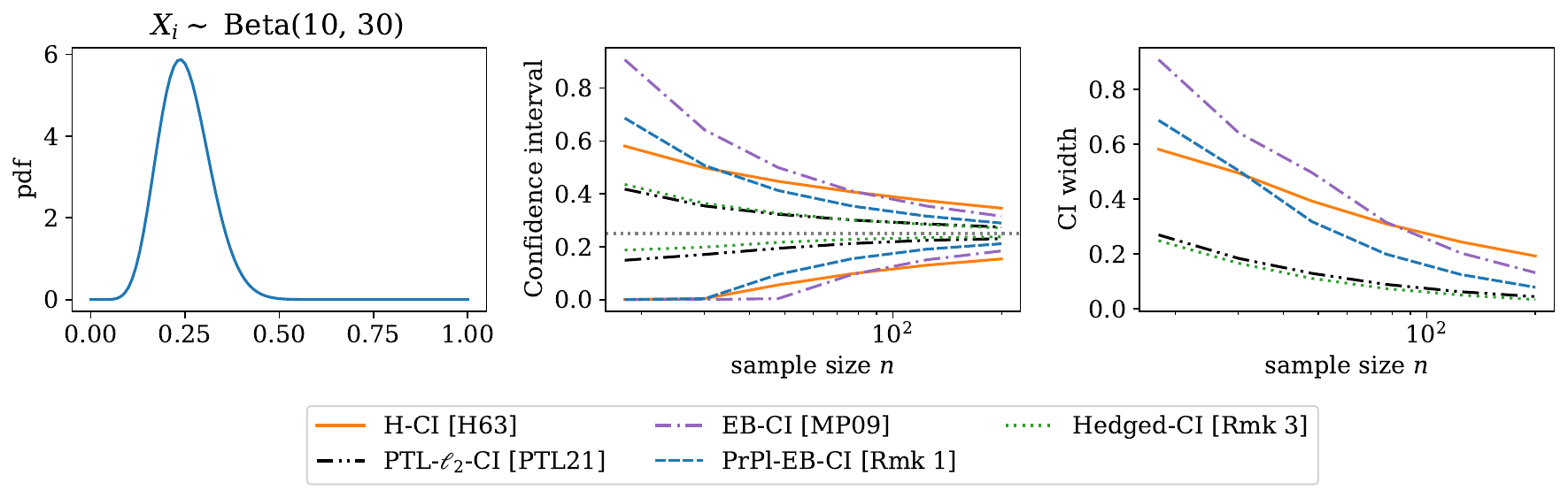}
 \caption{Various with-replacement fixed-time confidence intervals, including that of \citet{phan2021towards} (PTL-$\ell_2$-CI). While PTL-$\ell_2$-CI performs very well in the Beta(10, 30) regime, it appears to suffer for Bernoulli(1/2) with larger $n$. In any case, PTL-$\ell_2$-CI relies on iid data, while the other four methods do not.} 
 \label{fig:PTL}
\end{figure}

\subsection{Time-uniform confidence sequences (without replacement)}
\label{section:simDetails-TUWoR}
The WoR CSs which were introduced in this paper include
\begin{itemize}
    \item \textbf{Hedged-WoR}: Without replacement hedged capital process (Theorem~\ref{theorem:hedgedCSWoR}); and
    \item \textbf{hgKelly-WoR}: Without replacement analogue of hgKelly (Proposition~\ref{proposition:hedgedKellyInterval}).
\end{itemize}
The CSs labeled ``H-WoR [WR20]'' and ``EB-WoR [WR20]'' are the without-replacement Hoeffding- and empirical Bernstein-type CSs due to \citet{waudby2020confidence} which we recall now. 
\paragraph{H-WoR [WR20]}
Define the weighted WoR mean estimator and the Hoeffding-type $\lambda$-sequence,
\begin{equation*}
	\label{eqn:weightedWithoutReplaceMeanEst}
    \widehat \mu_t^\text{WoR}(\lambda_1^t) := \frac{\sum_{i=1}^t \lambda_i (X_i + \frac{1}{N-i+1} \sum_{j=1}^{i-1} X_j)}{\sum_{i=1}^t \lambda_i (1 + \frac{i-1}{N-i+1})}, ~~~\text{and}~~~ \lambda_t := \sqrt{\frac{8 \log(2/\alpha)}{t \log(t+1)}} \land 1,
\end{equation*}
respectively. Then ``H-CS [WR20]'' refers to the WoR Hoeffding-type CS,
\[ \widehat \mu_t^\text{WoR}(\lambda_1^t) \pm \frac{\sum_{i=1}^t \psi_{H}(\lambda_i) + \log(2/\alpha)}{\sum_{i=1}^t \lambda_i \left (1 + \frac{i-1}{N-i+1}\right )}.\]

\paragraph{EB-WoR [WR20]}
Analogously to the Hoeffding-type CSs, ``EB-CS [WR20]'' corresponds to the empirical Bernstein-type CSs for sampling WoR due to \citet{waudby2020confidence}. These CSs take the form
\[\widehat \mu_t^\WoR(\lambda_1^t) \pm \frac{\sum_{i=1}^t 4(X_i - \widehat \mu_{i-1})^2\psi_{E}(\lambda_i) + \log(2/\alpha)}{\sum_{i=1}^t \lambda_i \left (1 + \frac{i-1}{N-i+1}\right )},\]
where in this case, we have
\begin{equation}
    \lambda_t := \sqrt{\frac{2 \log(2/\alpha)}{\widehat \sigma_{t-1}^2 t \log(t+1)}} \land \frac12, \quad \widehat \sigma_t^2 := \frac{1/4 + \sum_{i=1}^t(X_i - \widehat \mu_i)^2}{t + 1}, ~~~\text{and}~~~ \widehat \mu_t := \frac{1}{t}\sum_{i=1}^t X_i.
\end{equation} 

\subsection{Fixed-time confidence intervals (without replacement)}
\label{section:simDetails-FTWoR}
The only fixed-time CI introduced in this paper is \textbf{Hedged-WoR-CI}: the without-replacement hedged capital process CI described in Section~\ref{sec:WoR}. The other two are both due to \citet{waudby2020confidence} which we describe now.

\paragraph{H-WoR-CI [WR20]}
This corresponds to the CI described in Corollary~3.1 of \citet{waudby2020confidence}. This has the form
\[\widehat \mu_n^\text{WoR} \pm \frac{\sqrt{\frac{1}{2} \log(2/\alpha)}}{\sqrt{n} + \frac{1}{\sqrt n} \sum_{i=1}^n \frac{i-1}{N-i+1}}.\]

\paragraph{EB-WoR-CI [WR20]}
Similarly, this CI corresponds to that described in Corollary~3.2 of \citet{waudby2020confidence}. Specifically, ``EB-WoR-CI [WR20]'' is defined as
\[ \widehat\mu_n^\WoR(\lambda_1^n) \pm \frac{\sum_{i=1}^n 4(X_i - \widehat \mu_{i-1})^2 \psi_E(\lambda_i) + \log(2/\alpha)}{\sum_{i=1}^n \lambda_i\left ( 1 + \frac{i-1}{N-i+1} \right )},\]
where
\begin{equation}
    \lambda_t := \sqrt{\frac{2 \log(2/\alpha)}{n\widehat \sigma_{t-1}^2}}\land \frac12 , \quad \widehat \sigma_t^2 := \frac{1/4 + \sum_{i=1}^t(X_i - \widehat \mu_i)^2}{t + 1}, ~\text{and}~ \widehat \mu_t := \frac{\tfrac{1}{2} + \sum_{i=1}^t X_i}{t + 1},
\end{equation} 
and $\widehat \mu_n^\WoR$ is defined as 
\[ \widehat \mu_t^\text{WoR}(\lambda_1^t) := \frac{\sum_{i=1}^t \lambda_i (X_i + \frac{1}{N-i+1} \sum_{j=1}^{i-1} X_j)}{\sum_{i=1}^t \lambda_i (1 + \frac{i-1}{N-i+1})}. \]

\subsection{Betting ``confidence distributions'': confidence sets at several resolutions}
\label{subsec:visual}
Figures~\ref{fig:CS-all-alpha} and~\ref{fig:conf-dist} demonstrate two tools to visualize CSs at various $\alpha$ and $t$.

\begin{figure}[h!]
    \centering
    \includegraphics[width=0.6\textwidth]{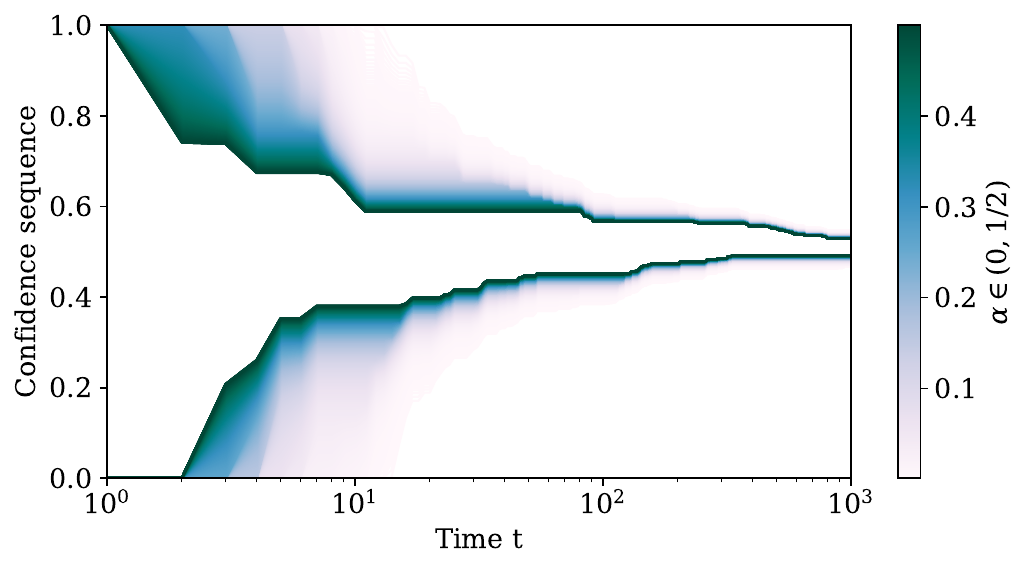}
    \caption{This plot shows the aGRAPA CS for all $\alpha\in[0,1/2]$ under $\text{Unif}[0,1]$ data.}
    \label{fig:CS-all-alpha}
\end{figure}

\begin{figure}[h!]
    \centering
    \includegraphics[width=\textwidth]{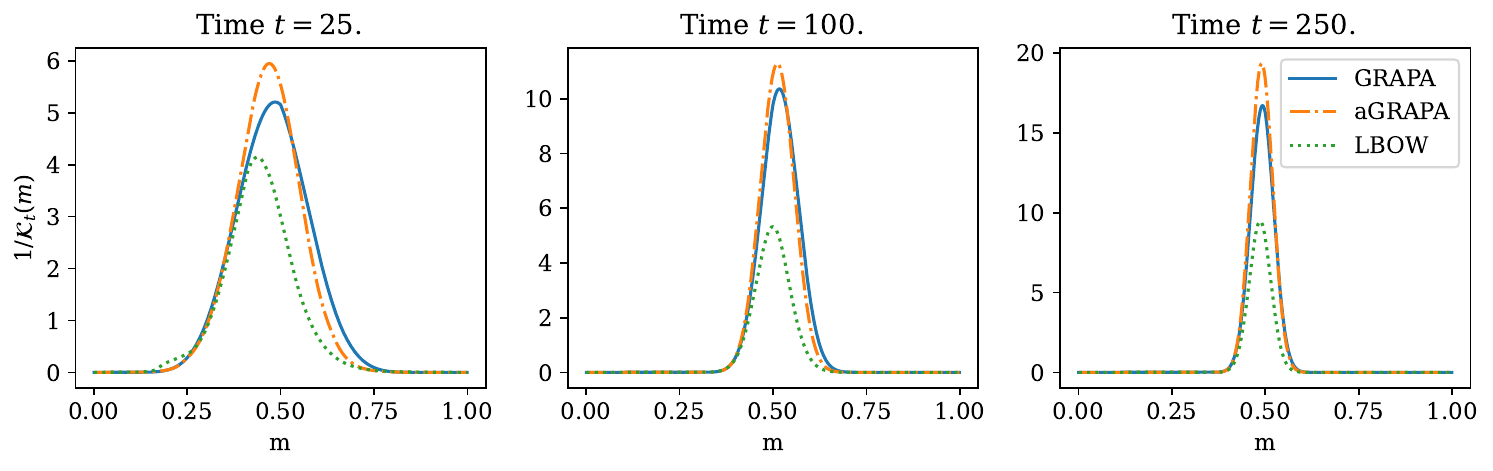}
    \caption{Here we plot the inverse wealth $1/\Kcal_t(m)$ in game $m$  against $m\in[0,1]$, at $t=25,100,250$ for three different betting strategies. Note the different $y$-axis scales. Despite not being normalized to yield a ``confidence distribution'', this is a useful visual tool. For example, the mode in each plot signifies the $m$ against which we have minimum wealth, which is a reasonable point estimator for $\mu$. Further, the superlevel set for any $\alpha \in [0,1]$ yields exactly the $(1-\alpha)$-CS for $\mu$ (for that corresponding time and strategy) since it yields all $m$ with wealth less than $1/\alpha$. Last, for any $m\in[0,1]$, the height (truncated at one) is anytime-valid $p$-value for the null hypothesis that the mean equals $m$.}
    \label{fig:conf-dist}
\end{figure}

\newpage

\section{Additional theoretical results}
\label{sec:app-theory}

\subsection{Betting confidence sets are tighter than Hoeffding}
\label{section:bettingBetterThanHoeffding}
In this section, we demonstrate that the betting approach can dominate Hoeffding for sufficiently large sample sizes. First, we show that for any $x, m \in (0, 1)$ and any $\lambda \in \mathbb R$, then $\gamma \equiv \gamma^m(\lambda)$ can be set as
\begin{equation*}
  \gamma^m(\lambda) := \exp \left \{ -m\lambda - \lambda^2/8 \right \}(\exp(\lambda) - 1) ,
\end{equation*}
so that
\[ H^m(x) := \underbrace{\exp \left \{ \lambda (x - m) - \lambda^2 / 8 \right \}}_{\text{Hoeffding term}} \leq \underbrace{1 + \gamma(x - m)}_{\text{Capital process term}} =: \Kcal^m(x) \]
for any $x,m \in [0, 1]$. In particular, the Hoeffding-type and capital process supermartingales are built from precisely the above terms, respectively, and so if $H^m(x) \leq \Kcal^m(x)$ for any $x \in [0, 1]$, then their respective supermartingales will satisfy the same inequality almost surely.
\begin{proposition}[Capital process dominates Hoeffding process]\label{proposition:capital-process-vs-hoeffding}
Suppose $x, m \in [0, 1]$ and $\lambda \in \mathbb R$. Then there exists $\gamma^m(\lambda) \in \mathbb R$ such that
\[ H^m(x) := \exp\left (\lambda(x-m) - \lambda^2/8 \right) \leq 1 + \gamma^m(\lambda)(x-m) =: \Kcal^m(x). \]
\end{proposition}
Note that Proposition~\ref{proposition:capital-process-vs-hoeffding} alone does not confirm that the Hoeffding-based CIs will be dominated by capital process-based CIs since $\gamma$ must be within $[-1/(1-m), 1/m]$ for $\Kcal^m(x)$ to be nonnegative. However, it is easy to verify that for all $\lambda \in [-0.45, 0.45]$, we have that $\gamma \in [-1, 1]$ and thus $\Kcal^m(x) \geq 0$. When constructing a Hoeffding-type $(1-\alpha)$-confidence interval, for example, one would set $\lambda_n^H := \sqrt{8\log(2/\alpha)/n}$, making $\lambda_n^H \in [-0.45, 0.45]$ whenever $n \geq 40\log(2/\alpha)$, in which case a capital process-based CI will dominate a Hoeffding-based CI almost surely.  
\begin{proof}[\myproofname{Proposition~\ref{proposition:capital-process-vs-hoeffding}}] 

We prove the result for $\lambda \geq 0 $ and remark that this implies the result for the case when $\lambda \leq 0$ by considering $(1-x)$ and $(1-m)$ instead of $x$ and $m$, respectively.

The proof proceeds in 3 steps. First, we consider the line segment $L^m(x)$ connecting $H^m(0)$ and $H^m(1)$ and note that by convexity of $H^m(x)$, we have that $H^m(x) \leq L^m(x)$ for all $x \in [0,1]$. We then find the slope of this line segment and set $\gamma$ to this value so that the line $\Kcal^m(x):= 1 + \gamma(x-m)$ has the same slope as $L^m(x)$. Finally, we demonstrate that $L^m(0) \leq \Kcal^m(0)$, and conclude that $H^m(x) \leq L^m(x) \leq \Kcal^m(x)$ for all $x \in [0, 1]$.

\paragraph{Step 1.}

Note that $H^m(x)$ is a convex function in $x \in [0,1]$, and thus \[\forall x \in [0, 1],\ H^m(x) \leq H^m(0) + \left [H^m(1) - H^m(0) \right]x =: L^m(x).\]
\paragraph{Step 2.} Observe that the slope of $L^m(x)$ is $H^m(1)-H^m(0)$. Setting $\gamma := H^m(1)-H^m(0)$ we have that $\Kcal^m(x)$ and $L^m(x)$ are parallel.
\paragraph{Step 3.} It remains to show that $ \Kcal^m(0) \geq L^m(0) \equiv H^m(0)$ for every $m \in [0, 1]$. Consider the following equivalent statements:
\begin{align*}
    &\Kcal^m(0) \geq H^m(0)\\
    \iff &1-m\left [ H^m(1) - H^m(0) \right] \geq H^m(0) \\
    \iff &1-m\exp \left(\lambda - \lambda m - \lambda^2 / 8\right) \geq (1-m) \exp\left ( -\lambda m - \lambda^2/8 \right) \\
    \iff &1 \geq \exp\left ( -\lambda m - \lambda^2/8 \right ) \left [ 1 - m + m \exp(\lambda)\right ]\\
    \iff &\exp \left(\lambda m + \lambda^2 / 8\right) \geq  \left [ 1 - m + m \exp(\lambda)\right ]\\
    \iff &a(\lambda) := \exp \left(\lambda m + \lambda^2 / 8\right) -  \left [ 1 - m + m \exp(\lambda)\right ] \geq 0.
\end{align*}
Now, note that $a$ is smooth and $a(0) = 0$ and so it suffices to show that its derivative $a'(\lambda) \geq 0$ for all $\lambda \geq 0$. To this end, consider the following equivalent statements.
\begin{align*}
    &a'(\lambda) \equiv \left ( m + \frac{\lambda}{4} \right ) \exp\left ( \lambda m + \lambda^2/8\right) - m\exp(\lambda)\geq 0 \\
    \iff &\left ( m + \frac{\lambda}{4} \right ) \exp\left ( \lambda m + \lambda^2/8\right) \geq m \exp(\lambda)\\
    \iff &\ln \left(1 + \frac{\lambda}{4m} \right) + \lambda m + \lambda^2/8 \geq \lambda \\
    \iff &b(\lambda) := \ln \left(1 + \frac{\lambda}{4m} \right) + \lambda m + \lambda^2/8 -\lambda \geq 0 ,
\end{align*}
and hence it suffices to show that $b(\lambda) \geq 0$. Similar to $a(\lambda)$, we have that $b(0) = 0$ and so it suffices to show that its derivative, $b'(\lambda) \geq 0$ for all $\lambda \geq 0$. Indeed,
\begin{align*}
    &b'(\lambda) \equiv \frac{1}{4m + \lambda} + m + \frac{\lambda}{4} - 1 \geq 0\\
    \iff &c(\lambda) := 1+m(4m+\lambda) +\frac{\lambda}{4} (4m + \lambda) - 4m - \lambda \geq 0
\end{align*}
Since $c(\lambda)$ is a convex quadratic, it is straightforward to check that
\[ \argmin_{\lambda \in \mathbb R} c(\lambda) = 2 - 4m, \]
and that $c(2-4m) = 0$.
In conclusion, if we set $\gamma \equiv \gamma^m(\lambda)$ as 
\[ \gamma^m(\lambda) := H^m(1) - H^m(0) = \exp \left \{ -m\lambda - \lambda^2/8 \right \}(\exp(\lambda) - 1), \]
then $H^m(x) \leq \Kcal^m(x):= 1 + \gamma^m(\lambda) (x- m)$ for every $m \in [0, 1]$.
This completes the proof. \myqed
\end{proof}

\subsection{Optimal convergence of betting confidence sets}
\label{section:optimalRates}
In Section~\ref{section:howToBet}, it was mentioned that for nonnegative martingales, Ville's inequality is nearly an equality and hence martingale-based CSs are nearly tight in a time-uniform sense. However, it is natural to wonder what other theoretical guarantees betting CSs/CIs can have in addition to their empirical performance. In the time-uniform setting, CSs for the mean cannot attain widths which scale faster than $\asymp \sqrt{\log\log t / t}$, due to the law of the iterated logarithm. Similarly, fixed-time CIs cannot scale faster than $\asymp 1/\sqrt{n}$. In this section, we show that it is possible to choose betting strategies such that the resulting CSs and CIs scale at the optimal rates of $O(\sqrt{\log\log t / t})$ and $O(1/\sqrt{n})$, respectively.

\subsubsection{An iterated logarithm betting confidence sequence}

We will establish the law of the iterated logarithm (LIL) convergence rate by carefully constructing a capital process martingale whose resulting CS is --- for sufficiently large $t$ ---  tighter than a larger CS which itself attains the required LIL rate. 

Before stating the result in Proposition~\ref{proposition:lil-betting-cs}, let $\zeta(s) := \sum_{k=1}^\infty \frac{1}{k^s}$ be the Riemann zeta function and for each $k \in \{1, 2, \dots\}$, define 
\begin{align*}
    \lambda_k &:= \sqrt{\frac{8\log\left ( k^s \zeta(s) \right )}{\eta^{k+1/2}}}, ~~~\text{and} \\
    \gamma_k(m) &= \exp \left \{-m\lambda_k - \lambda_k^2 / 8 \right \}(\exp(\lambda_k) - 1) \land 1,
\end{align*}
where $\eta > 1$ is some user-chosen constant.
Let $k_t$ denote the (unique) integer such that $\log_\eta t \leq k_t \leq \log_\eta t + 1$.
Define the process
\begin{align*}
\Kcal_t^\Lcal &:= \frac{1}{2}\Kcal_t^{\Lcal+}(m) + \frac{1}{2}\Kcal_t^{\Lcal-}(m) \\
\text{ where } \quad     \Kcal_t^{\Lcal+}(m) &:= \frac{1}{k_{t}^{s} \zeta(s)} \prod_{i=1}^t (1 + \gamma_{k_t}(X_i - m)) ~~~\text{and}\\
    \Kcal_t^{\Lcal-}(m) &:= \frac{1}{k_t^{s} \zeta(s)} \prod_{i=1}^t (1 - \gamma_{k_t}(X_i - m)).
\end{align*}
Note that $\Kcal_t^{\Lcal+}(m)$ and $\Kcal_t^{\Lcal-}(m)$ are both upper-bounded by the infinite mixtures
\begin{align}
    \Kcal_t^{\Lcal+} (m) &\leq \sum_{k=1}^\infty \frac{1}{k^{s} \zeta(s)} \prod_{i=1}^t (1 + \gamma_k(X_i - m)) ~~~\text{and}\label{eq:infinite-mixture-LIL-positive}\\
    \Kcal_t^{\Lcal-}(m) &\leq \sum_{k=1}^\infty \frac{1}{k^{s} \zeta(s)} \prod_{i=1}^t (1 - \gamma_k(X_i - m)), \label{eq:infinite-mixture-LIL-negative}
\end{align}
which themselves form nonnegative martingales when $m = \mu$ by Fubini's theorem.
Consequently,
\[ C_t^\Lcal := \left \{ m \in [0, 1] : \Kcal_t^\Lcal(m) < \frac{1}{\alpha} \right \} \]
forms a $(1-\alpha)$-CS for $\mu$. The following proposition establishes the LIL rate of $C_t^\Lcal$.

\begin{proposition}\label{proposition:lil-betting-cs}
    The CS $(C_t^\Lcal)_{t=1}^\infty$ has a width of $O(\sqrt{ \log\log t / t})$, meaning
    \[ \nu(C_t^\Lcal) = O \left ( \sqrt{\frac{\log \log t }{t}} \right ), \]
    where $\nu$ is the Lebesgue measure.
\end{proposition}

\begin{proof}
    The proof proceeds in three steps. In Step 1, we construct a distinct but related CS (which we will denote by $(C_t^\times)_{t=1}^\infty$) via the stitching technique \citep{howard_uniform_2019}. In Step 2, we demonstrate that this stitched CS achieves the desired rate by deriving an analytically tractible superset whose width scales as $O(\sqrt{\log \log t / t})$. Finally, in Step 3, we will show that the stitched CS $C_t^\times$ is a superset of $C_t^\Lcal$ for all $t$ sufficiently large, thus implying the final result. 

    \paragraph{Step 1. Constructing the stitched CS $C_t^\times$:}
    
    In the language of betting, the idea behind stitching is to first divide one's capital up into infinitely many portions $w_1, w_2, \dots$ such that $\sum_{k=1}^\infty w_k = 1$, and then place a constant bet $\lambda_k$ using a capital of $w_k$ on a designated epoch of time, which will be chosen to be geometrically spaced. In what follows, the portions $w_k$ will be given by $w_k = \frac{1}{\zeta(s) k^s}$, and we will divide time $\{1, 2, 3, \dots \}$ up into epochs demarcated by the endpoints $\eta^{k-1}$ and $\eta^{k}$ for each $k \in \{1,2,3, \dots\}$ and for some user-specified $\eta > 1$ (e.g. $\eta = 1.1$). The constant bets $\lambda_k$ will be chosen so that they are effective between $\eta^{k-1}$ and $\eta^k$ and lead to $O(\sqrt{\log \log t / t})$ widths after being combined across epochs.

    The construction of the stitched boundary essentially follows (a simplified version of) the proof of Theorem 1 in \citet[Section A.1]{howard_uniform_2019}, but we present the derivation here for completeness. Consider the Hoeffding-type process for a fixed $\lambda \in \RR$:
\begin{equation}
    \label{eq:hoeffdingStitch}
    M_t^{\lambda}(m) := \exp \left \{ \lambda S_t(m) - t \lambda^2 / 8 \right \} ,
\end{equation}
where $S_t(m) := \sum_{i=1}^t (X_i - m)$. As discussed in Section~\ref{section:warmup}, $M_t(\mu)$ forms a test supermartingale, and hence by Ville's inequality we have
\[ P\left ( \exists t \geq 1 : S_t(\mu) \geq \underbrace{\frac{r + t\lambda^2 / 8}{\lambda}}_{g_{\lambda, r}(t)} \right ) \leq e^{-r}. \]
We have typically used $r = \log(1/\alpha)$ throughout the paper, but the above alternative notation will help in the following discussion. Using the notation of \citet[Section A.1]{howard_uniform_2019}, define the boundary above as $g_{\lambda, r}(t) := (r + t \lambda^2/8)/\lambda$, and let 
\begin{align*}
    \lambda_k &:= \sqrt{\frac{8r_k}{\eta^{k-1/2}}}, \\
    \text{where} ~~~ r_k &:= \log \left ( \frac{k^s \zeta(s)}{\alpha/2} \right).
\end{align*}
Some algebra will reveal that plugging the above choices of $\lambda_k$ and $r_k$ into $g_{\lambda, r}(t)$ yields
\[ g_{\lambda_k, r_k}(t) := \sqrt{\frac{r_k t}{8}}\left ( \sqrt{\frac{\eta^{k-1/2}}{t}} + \sqrt{\frac{t}{\eta^{k-1/2}}}\right ), \]
resulting in the following concentration inequality for each $k$:
\[ P\left ( \exists t \geq 1 : S_t(\mu) \geq g_{\lambda_k, r_k}(t) \right ) \leq \exp\{-r_k\}. \]
Let $k_t$ denote the (unique) epoch number such that $\eta^{k_t-1} \leq t \leq \eta^{k_t}$ (i.e. such that $\log_\eta t \leq k_t \leq \log_\eta t + 1$). Now, we take a union bound over $k = 1, 2, 3, \dots$ resulting in the following boundary,
\[ P\left ( \exists t \geq 1 : S_t(\mu) \geq g_{\lambda_{k_t}, r_{k_t}}(t) \right ) \leq \sum_{k=1}^\infty \exp\{ -r_k \} = \frac{\alpha/2}{\zeta(s)} \underbrace{\sum_{k=1}^\infty \frac{1}{k^s}}_{\zeta(s)} = \alpha/2.\]
Repeating all of the previous steps for $-S(\mu)$ and taking a union bound, we arrive at the $(1-\alpha)$ stitched CS $(C_t^\times)_{t=1}^\infty$ given by
\[ C_t^\times := \left ( \frac{1}{t}\sum_{i=1}^t X_i \pm \frac{g_{\lambda_{k_t}, r_{k_t}}(t)}{t} \right ), \]
with the guarantee that $P(\exists t \geq 1 : \mu \notin C_t^\times ) \leq \alpha$.

\paragraph{Step 2. Demonstrating that $C_t^\times$ achieves the desired LIL width:}

Now, we will simply upper-bound $g_{\lambda_{k_t}, r_{k_t}}(t)$ by an analytical boundary depending explicitly on $t$ (rather than implicitly through $k_t$) to see that it achieves the desired LIL width. First, notice that $\sqrt{\eta^{k_t-1/2} / t} + \sqrt{t / \eta^{k_t-1/2}}$ is uniquely minimized when $t = \eta^{k_t-1/2}$ and hence its maximum on the interval $(\eta^{k_t-1}, \eta^{k_t})$ must be at the endpoints. Therefore, $\sqrt{\eta^{k_t-1/2} / t} + \sqrt{t / \eta^{k_t-1/2}} \leq \eta^{1/4} + \eta^{-1/4}$ and thus for each $k$, we have
\[ g_{\lambda_{k_t}, r_{k_t}}(t) \leq \sqrt{\frac{r_{k_t}t }{8}} \left ( \eta^{1/4} + \eta^{-1/4} \right ) ~~~\text{for all } \eta^{k_t-1} \leq t \leq \eta^{k_t}. \]
Furthermore, for all $\eta^{k_t-1} \leq t \leq \eta^{k_t}$, we have that $k_t \leq \log_\eta t + 1$. Applying this inequality to the above, we obtain the final bound which does not depend on $k$,
\[ g_{\lambda_{k_t}, r_{k_t}}(t) \leq \sqrt{\frac{t \log\left ( 2\left ( \log_\eta t + 1 \right )^s \zeta(s) / \alpha \right )}{8}} \left ( \eta^{1/4} + \eta^{-1/4} \right ) ~~~\text{for all } k. \]
In conclusion, we have that 
\[ C_t^\times \subseteq \left ( \frac{1}{t} \sum_{i=1}^t X_i \pm \sqrt{\frac{\log\left ( 2\left ( \log_\eta t + 1 \right )^s \zeta(s) / \alpha \right )}{8t}} \left ( \eta^{1/4} + \eta^{-1/4} \right ) \right ), \]
and thus $C_t^\times = O\left ( \sqrt{\log \log t / t} \right ), $ as desired.

\paragraph{Step 3. Showing that $C_t^\Lcal \subseteq C_t^\times$ for all $t$ large enough:}

This step in the proof essentially follows immediately from the discussion in Section~\ref{section:bettingBetterThanHoeffding}. We justified that for $\lambda \geq 0$, setting $\gamma$ as 
\[ \gamma = \exp \left \{-m\lambda - \lambda^2 / 8 \right \}(\exp(\lambda) - 1) \land 1, \]
yields $1 + \gamma (x - m) \geq \exp\left \{ \lambda(x - m) - \lambda^2 / 8 \right \}$ for all $x, m \in [0, 1]$ if $\lambda$ is sufficiently small (i.e. so that $\gamma$ is not relying on truncation at 1). 
Since $\lambda_k$ is decreasing in $t$, it follows that for $t$ sufficiently large,
\[ \prod_{i=1}^t (1 + \gamma_{k_t}(X_i - m)) \geq \exp\left \{ \lambda_{k_t} S_t(m) - \lambda_{k_t}^2/8 \right \} ~~~\text{almost surely.} \]
Therefore, for $t$ sufficiently large,
\begin{align*}
    \Kcal_t^{\Lcal+}(m) &:= \frac{1}{k_t^{s} \zeta(s)} \prod_{i=1}^t (1 + \gamma_{k_t}(X_i - m)) \\
                         &\geq \frac{1}{k_t^{s} \zeta(s)} \exp\left \{ \lambda_{k_t} S_t(m) - \lambda_{k_t}^2/8 \right \} =: H_t^{\infty+}(m)
\end{align*}
and similarly for $K_t^{\Lcal-}(m)$,
\begin{align*}
    \Kcal_t^{\Lcal-}(m) &\geq \frac{1}{k_t^{s} \zeta(s)} \exp\left \{ -\lambda_{k_t} S_t(m) - \lambda_{k_t}^2/8 \right \} =: H_t^{\infty-}(m) .
\end{align*}
Therefore, for sufficiently large $t$, we have
\begin{align*}
    C_t^\Lcal &:= \left \{ m \in [0, 1] : \Kcal_t^\Lcal(m) < \frac{1}{\alpha} \right\}\\
    &\subseteq \underbrace{\left \{ m \in \RR : \max \left \{ \frac{1}{2}H_t^{\infty+}(m) ,\ \frac{1}{2} H_t^{\infty -}(m) \right \} < \frac{1}{\alpha} \right \}}_{(\star)}
\end{align*}
and it is straightforward to verify that $(\star)$ is precisely $C_t^\times$.

In summary, we constructed a CS $C_t^\times$ using the stitching technique in Step 1, and then showed that $\nu(C_t^\times) = O(\sqrt{\log \log t / t})$ in Step 2. Finally in Step 3, we showed that our discrete mixture betting CS $C_t^\Lcal$ is a subset of $C_t^\times$ for $t$ sufficiently large, and hence by subadditivity of measures,
\[ \nu(C_t^\Lcal) = O\left ( \sqrt{\frac{\log \log t }{t}} \right ), \]
which completes the proof.\myqed
\end{proof}
\begin{remark}
 Notice that $\Kcal_t^{\Lcal+}$ and $\Kcal_t^{\Lcal-}$ can be made strictly more powerful if they are replaced by adding additional terms, as long as the final sums are upper-bounded by \eqref{eq:infinite-mixture-LIL-positive} and \eqref{eq:infinite-mixture-LIL-negative}, respectively. In particular, any finite sum analogue of \eqref{eq:infinite-mixture-LIL-positive} and \eqref{eq:infinite-mixture-LIL-negative} would have sufficed, as long as $\Kcal_t^{\Lcal+}$ and $\Kcal_t^{\Lcal-}$ form a term in each sum, respectively. We presented $\Kcal_t^{\Lcal+}$ and $\Kcal_t^{\Lcal-}$ in their current forms for the sake of notational (and computational) simplicity.
\end{remark}


\subsubsection{\texorpdfstring{The $\sqrt{n}$}{Root-n}-convergence of betting CIs}
\label{section:root-n-convergence-hedgedCI}

\begin{proposition}
\label{proposition:hedgedWidth}
Suppose $X_1^n \sim P$ are independent observations from a distribution $P \in \Pcal^\mu$ with mean $\mu \in [0, 1]$. Let $\lambda_n \in (0, 1)$ such that $\lambda_n \asymp 1/\sqrt{n}$. Then the confidence interval, 
\[ C_n := \left \{ m \in [0, 1] : \Kcal^\pm_n < \frac{1}{\alpha} \right \} ~~~\text{has an asymptotic width of $O(1/\sqrt{n})$.} \]
\end{proposition}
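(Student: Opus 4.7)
By Lemma~\ref{lemma:convexityOfHedged}, $C_n$ is an interval, and by Theorem~\ref{theorem:hedgedCS} it contains $\mu$ with probability at least $1-\alpha$. The plan is therefore to show that for some constant $K$, the points $m = \mu \pm K/\sqrt{n}$ fall \emph{outside} $C_n$ with probability tending to one, which pins both endpoints within $O(1/\sqrt{n})$ of $\mu$. I first assume $\mu$ is bounded away from $\{0,1\}$, so that the truncation $\lambda_i^\pm(m) \wedge c/m$ (resp.~$c/(1-m)$) is inactive for $n$ large and the constant bet $\dot\lambda_i^+ = \dot\lambda_i^- = \lambda_n$ is admissible; the boundary cases follow from the trivial inclusion $C_n \subseteq [0,1]$ together with a more careful choice of scale.

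First take $m = \mu - K/\sqrt{n}$ and the ``long'' side of the hedge, $\Kcal_n^+(m)$. Using $\log(1+x) \geq x - x^2$ for $|x| \leq 1/2$, which applies since $|\lambda_n(X_i - m)| \leq \lambda_n \to 0$, I obtain
\[
\log \Kcal_n^+(m) \;\geq\; \lambda_n \sum_{i=1}^n (X_i - m) \;-\; \lambda_n^2 \sum_{i=1}^n (X_i - m)^2 .
\]
Hoeffding's inequality applied to each sum gives $\sum_i(X_i - m) = K\sqrt{n} + O_p(\sqrt{n})$ and $\sum_i(X_i - m)^2 = n\sigma^2 + O_p(\sqrt{n})$. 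Since $\lambda_n\sqrt{n}$ is bounded above and below by positive constants, multiplying through yields
\[
\log \Kcal_n^+(m) \;\geq\; (\lambda_n\sqrt{n})\, K \;-\; (\lambda_n\sqrt{n})^2\, \sigma^2 \;+\; O_p(1),
\]
and choosing $K$ large enough (as a function of $\alpha$, $\theta$, the constants implicit in $\lambda_n \asymp 1/\sqrt{n}$, and the worst-case bound $\sigma^2 \leq 1/4$) makes the right-hand side exceed $\log(1/(\theta\alpha))$ with probability $1-o(1)$. Thus $\theta\Kcal_n^+(m) \geq 1/\alpha$, so $m \notin C_n$. A mirror argument applied to $\Kcal_n^-$ at the point $m = \mu + K/\sqrt{n}$ gives the two-sided containment $C_n \subseteq [\mu - K/\sqrt{n},\, \mu + K/\sqrt{n}]$, yielding the claimed width.

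The wealth calculation is a routine second-order Taylor bound combined with Hoeffding concentration, and I expect no serious obstacles there. The main place where care is needed is ensuring that a single constant $K$ can be chosen uniformly in the unknown $\mu$ (modulo the interior assumption) and $\sigma^2$; the worst-case bound $\sigma^2 \leq 1/4$ suffices. Finally, to upgrade the conclusion from convergence-in-probability to an almost-sure statement one would replace Hoeffding's inequality by its Borel--Cantelli version using summable tail bounds, which costs only a logarithmic factor that is absorbed into the constant $K$.
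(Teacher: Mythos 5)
Your argument is directionally correct, but it takes a genuinely different and more complicated route than the paper's proof, and it yields a strictly weaker conclusion. You anchor the confidence set at the \emph{true mean} $\mu$, show via Hoeffding concentration that the wealth at $\mu \pm K/\sqrt{n}$ exceeds $1/\alpha$ with high probability, and then use intervality of $C_n$. This costs you three things: (i) you need a concentration argument and careful constant-tracking, (ii) the conclusion you obtain is only $O_p(1/\sqrt{n})$ (or, after Borel--Cantelli, almost-sure with a realization-dependent constant), not a deterministic width bound, and (iii) you have to introduce an extraneous assumption that $\mu$ is bounded away from $\{0,1\}$ to handle the $c/m$ truncation --- which is a red herring, since the proposition is stated for a fixed constant bet $\lambda_n \in (0,1)$ and thus $1+\lambda_n(X_i - m) > 0$ automatically for all $m \in [0,1]$; no truncation is involved. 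The paper's argument is purely deterministic and anchored at the \emph{sample mean}: it lower-bounds $\Kcal_n^\pm(m)$ by an explicit exponential $B_n^\pm(m)$ using the Fan-type inequality $\log(1+\lambda y) \geq \lambda y - 4\psi_E(\lambda) y^2$ together with the crude bound $(X_i - m)^2 \leq 1$, so that the sublevel set of $B_n^\pm$ is a closed-form interval $\Scal_n \supseteq C_n$ centered at $\bar X_n$ whose half-width $\bigl(\log(2/\alpha) + 4n\psi_E(\lambda_n)\bigr)/(n\lambda_n)$ is a \emph{nonrandom} quantity, and since $\psi_E(\lambda_n) \asymp \lambda_n^2 \asymp 1/n$, this half-width is $O(1/\sqrt{n})$ deterministically, on every sample path. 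Your approach would work as a proof of a weaker statement, but if you want the sharper deterministic conclusion, replacing the concentration step with the trivial bound $(X_i - m)^2 \leq 1$ and solving the resulting exponential level-set in closed form is the cleaner path.
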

\begin{proof}
Writing out the capital process with positive bets, we have by Lemma~\ref{lemma:extendedFan} that for any $m \in [0, 1]$,
\begin{align*}
    \Kcal_n^+(m) &:= \prod_{i=1}^n ( 1 + \lambda_n (X_i - m)) \\
    &\geq \exp \left ( \lambda_n\sum_{i=1}^n (X_i - m) - \psi_E(\lambda_n)\sum_{i=1}^n 4(X_i - m)^2  \right ) \\
    &\geq \exp \left (  \lambda_n\sum_{i=1}^n(X_i - m) - 4n \psi_E(\lambda_n) \right ) =: B_t^+(m),
\end{align*}
and similarly for negative bets, 
\begin{align*}
    \Kcal_n^-(m) &:= \prod_{i=1}^n (1 - \lambda_n (X_i - m)) \\
    &\geq \exp \left (  -\lambda_n\sum_{i=1}^t (X_i - m) -4n \psi_E(\lambda_n) \right ) =: B_t^-(m).
\end{align*}
For any $\theta \in (0, 1)$, consider the set,
\[ \Scal_n := \left \{ m : B_t^+(m) < \frac{1}{\theta\alpha} \right \} \bigcap \left \{m : B_t^-(m) < \frac{1}{(1-\theta)\alpha} \right \} \]
Now notice that the $1/\alpha$-level set of $\Kcal_n^\pm(m) := \max \left \{\theta\Kcal_n^+(m), (1-\theta)\Kcal_n^-(m)\right \}$ is a subset of $\Scal_n$:
\[ C_n = \left \{m : \Kcal_n^+(m) < \frac{1}{\theta\alpha} \right \} \bigcap \left \{ m: \Kcal_n^-(m) < \frac{1}{(1-\theta)\alpha} \right \} \subseteq \Scal_n. \]
On the other hand, it is straightforward to derive a closed-form expression for $\Scal_n$: 
\[ \left (\frac{\sum_{i=1}^n X_i}{n} - \frac{\log \left ( \frac{1}{\theta\alpha} \right ) + 4n \psi_E(\lambda_n) }{n\lambda_n}, \frac{\sum_{i=1}^n X_i}{n} + \frac{\log \left (\frac{1}{(1-\theta)\alpha}\right) + 4n \psi_E(\lambda_n) }{n\lambda_n}\right ), \]
which in the typical case of $\theta = 1/2$ has the cleaner expression,
\[ \frac{\sum_{i=1}^n X_i}{n} \pm \frac{\log (2/\alpha) + 4n \psi_E(\lambda_n) }{n\lambda_n}. \]
As discussed in Section~\ref{section:howToBet}, we have by two applications of L'H\^opital's rule that 
\( \frac{\psi_E(\lambda_n)}{\psi_H(\lambda_n)} \xrightarrow{n \rightarrow \infty} 1, \)
where $\psi_H(\lambda_n) := \lambda_n^2 / 8 \asymp 1/n $ and thus the width $W_n$ of $\Scal_n$ scales as 
\[ W_n := 2\cdot \frac{\log (1/\alpha) + 4n \psi_E(\lambda_n) }{n\lambda_n} \asymp \frac{\log (1/\alpha) }{\sqrt n} + \frac{4 n / n}{\sqrt{n}} \asymp \frac{1}{\sqrt{n}}. \]
Since $C_n \subseteq \Scal_n$, we have that $C_n$ has a width of $O(1/\sqrt{n})$, which completes the proof. \myqed
\end{proof}
Despite these results, the hedged capital CI presented and recommended in Section~\ref{section:hedgedCapitalProcess} does not satisfy the assumptions of the above proof. In particular, we recommended using the variance-adaptive predictable plug-in,
\begin{equation}
\small
\label{eq:hedgedCI-PM}
\lambda_t^\VAEB := \sqrt{\frac{2 \log (2/\alpha)}{n \widehat \sigma_{t-1}^2}}, ~~~  \widehat \sigma_t^2 := \frac{1/4 + \sum_{i=1}^t ( X_i - \widehat \mu_i)^2}{t+1}, ~~~\text{and}~~~  \widehat \mu_t := \frac{ 1/2 + \sum_{i=1}^t X_i }{t + 1},
\end{equation}
using a truncation which depends on $m$,
\begin{equation}
\label{eq:HedgedCI-m-trunc}
\lambda_t^+(m) := \lambda_t^\pm \land \frac{c}{m}, ~~~ \lambda_t^-(m) := - \left (\lambda_t^\pm \land \frac{c}{1-m} \right ), 
\end{equation}
and finally defining the hedged capital process for each $t \in \{1, \dots, n\}$:
\[ \Kcal_t^\pm(m) := \max \left \{ \theta \prod_{i=1}^t( 1 + \lambda_i^+(m)\cdot (X_i - m)), (1-\theta) \prod_{i=1}^t( 1 - \lambda_i^-(m)\cdot (X_i - m)) \right \}. \]
Furthermore, the resulting CI is defined as an intersection, 
\begin{equation}
\label{eq:hedgedCI-intersection}
\BI_n := \bigcap_{t=1}^n \left \{ m \in [0, 1] : \Kcal_t^\pm(m) < \frac{1}{\alpha} \right \}.
\end{equation}
All of these tweaks (i.e. making bets predictable, truncating beyond $(0, 1)$, and taking an intersection) do not in any way invalidate the type-I error, but we find (through simulations) that they tighten the CIs, especially in low-variance, asymmetric settings (see Figure~\ref{fig:hedgedCI-tweaks}).
\begin{figure}[!ht]
 \centering
    \includegraphics[width=\textwidth]{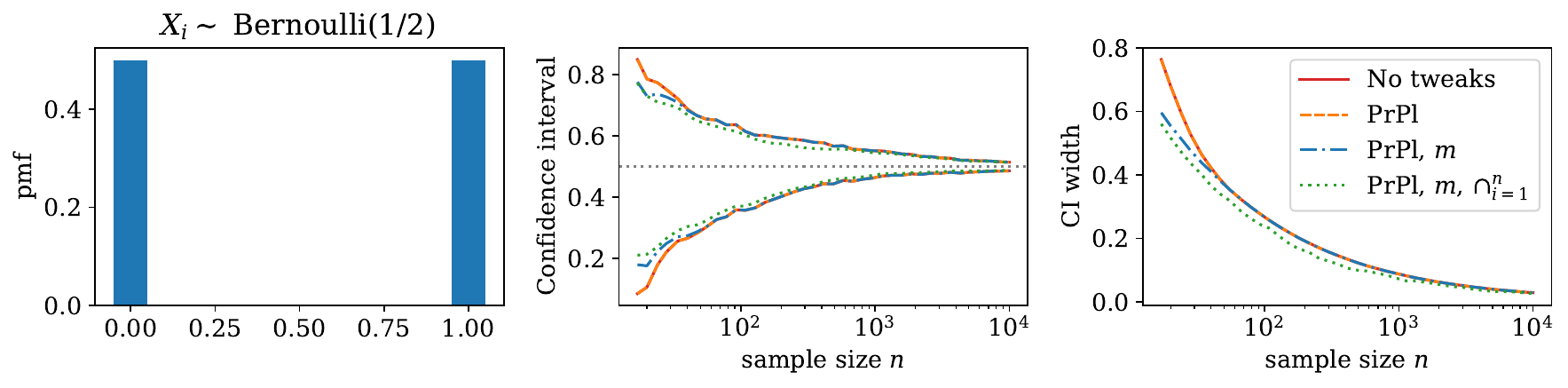}
 \hfill
    \centering
    \includegraphics[width=\textwidth]{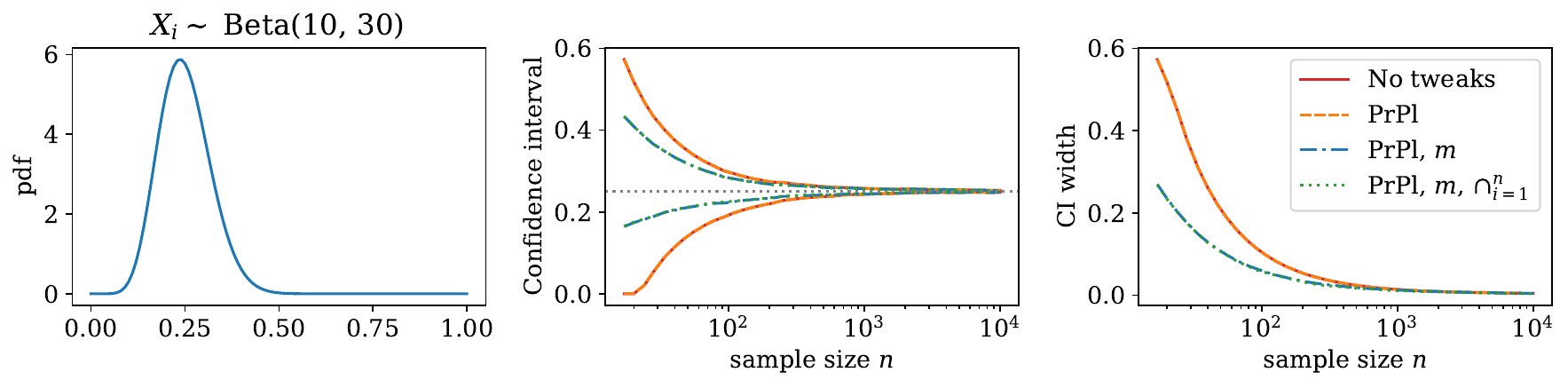}
 \caption{Hedged capital CIs with various added tweaks. The CIs labeled ``No tweaks'' refer to those which satisfy the conditions of Proposition~\ref{proposition:hedgedWidth}. The other three plots differ in which ``tweaks'' have been added. Those with ``PrPl'' in the legend use the predictable plug-in approach defined in \eqref{eq:hedgedCI-PM}; those with $m$ in the legend have been truncated using $m$ as outlined in \eqref{eq:HedgedCI-m-trunc}; finally, the plots with $\cap_{i=1}^n$ in their legends had their running intersections taken as in \eqref{eq:hedgedCI-intersection}.} 
 \label{fig:hedgedCI-tweaks}
\end{figure}

\subsection{On the width of empirical Bernstein confidence intervals}
\label{section:VAEB_scale}
Recall the predictable plug-in empirical Bernstein confidence interval:
\[ C_n^\VAEB := \left ( \frac{\sum_{i=1}^n \lambda_i X_i }{\sum_{i=1}^n \lambda_i} \pm \frac{\log (2/\alpha) + \sum_{i=1}^n v_i \psi_E (\lambda_i)}{\sum_{i=1}^n \lambda_i} \right ), \]
where 
\[ \lambda_t := \sqrt{\frac{2 \log (2/\alpha)}{n \widehat \sigma_{t-1}^2}},~ \widehat \sigma^2_{t} := \frac{\tfrac{1}{4} + \sum_{i=1}^t (X_i - \widehat \mu_i)^2 }{t + 1}, \text{and}~ \widehat \mu_t := \frac{\tfrac12 + \sum_{i=1}^t X_i }{t + 1}. \]
Below, we analyze the asymptotic behavior of the width of $C_n^\VAEB$ in the i.i.d. setting. In Proposition~\ref{proposition:vaeb-width}, we will show that
if the data are drawn i.i.d.\ from a distribution $Q \in \Qcal^\mu$ having variance $\sigma^2$, then the half-width $W_n$ of $C_n^\VAEB$ scales as
\begin{equation}
    \sqrt{n} W_n ~\equiv~ \sqrt{n} \left ( \frac{\log(2/\alpha) + \sum_{i=1}^n v_i \psi_E(\lambda_i)}{\sum_{i=1}^n \lambda_i}  \right ) \xrightarrow{a.s.} \sigma \sqrt{2 \log (2/\alpha)},
\end{equation}
and hence the width is asymptotically proportional to the standard deviation. 

First, let us prove a few lemmas about \textit{nonrandom} sequences of numbers, which will be helpful in what follows. These are simple facts for which we could not find a proof to reference, so we prove them below for completeness.

\begin{lemma}
\label{lemma:avgOfConvergent}
    Suppose $(a_n)_{n=1}^\infty$ is a sequence of real numbers such that $a_n \rightarrow a$. Then their cumulative average also converges to $a$, meaning that
    \(\frac{1}{n} \sum_{i=1}^n a_i \rightarrow a. \)
\end{lemma}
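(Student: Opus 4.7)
The plan is to prove this classical Cesàro-mean result by the standard $\epsilon/2$ splitting argument. The idea is that since $a_n$ is eventually very close to $a$, the tail of the average is controlled directly by this closeness, while the initial (fixed-length) segment contributes a vanishing amount once it is spread over $n$ terms.

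First I would fix an arbitrary $\epsilon > 0$ and rewrite the quantity of interest as
\[
\frac{1}{n}\sum_{i=1}^n a_i - a \;=\; \frac{1}{n}\sum_{i=1}^n (a_i - a).
\]
By the convergence $a_n \to a$, there exists $N \in \NN$ such that $|a_i - a| < \epsilon/2$ for every $i \geq N$. Then I would split the sum at index $N$:
\[
\left|\frac{1}{n}\sum_{i=1}^n (a_i - a)\right| \;\leq\; \frac{1}{n}\sum_{i=1}^{N-1} |a_i - a| \;+\; \frac{1}{n}\sum_{i=N}^{n} |a_i - a|.
\]

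The second step is to control each piece. The second sum is bounded above by $\frac{n - N + 1}{n}\cdot \frac{\epsilon}{2} \leq \frac{\epsilon}{2}$ uniformly in $n \geq N$. The first sum is a fixed finite quantity $S_N := \sum_{i=1}^{N-1}|a_i - a|$ divided by $n$, so by choosing $n$ large enough --- specifically $n \geq 2 S_N / \epsilon$ --- we get $S_N/n \leq \epsilon/2$. Combining the two bounds yields $\bigl|\frac{1}{n}\sum_{i=1}^n a_i - a\bigr| < \epsilon$ for all sufficiently large $n$, which is exactly the claim.

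There is no real obstacle here; the only subtlety worth being careful about is that $N$ depends on $\epsilon$ but not on $n$, so $S_N$ is a fixed constant (depending on $\epsilon$) and dividing it by $n$ genuinely produces a vanishing contribution. No macros beyond $\NN$ and standard math are needed, and the proof is essentially three lines of display math plus a sentence of explanation.
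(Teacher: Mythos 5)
Your proof is correct and follows essentially the same splitting argument as the paper: both fix $\epsilon$, choose $N$ from the convergence of $a_n$, split the averaged sum at $N$, bound the tail uniformly by the closeness condition, and kill the fixed head sum by taking $n$ large. The only difference is cosmetic --- you budget $\epsilon/2$ for each piece to land exactly on $\epsilon$, while the paper budgets $\epsilon$ for each and lands on $2\epsilon$; both are valid.
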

\begin{proof}
    Let $\epsilon > 0$ and choose $N \equiv N_\epsilon \in \NN$ such that whenever $n \geq N$, we have 
    \begin{equation}
    \label{eq:a_n-a}
        | a_n - a | < \epsilon.
    \end{equation} Moreover, choose 
    \begin{equation}
        \label{eq:M_N1}
        M \equiv M_{N} > \frac{\sum_{i=1}^N |a_i - a|}{\epsilon}
    \end{equation} and note that 
    \begin{equation}
        \label{eq:M_N2}
        \frac{n - N - 1}{n} < 1.
    \end{equation}
    Let $n \geq \max\left \{ N, M \right \}$. Then we have by the triangle inequality,
    \begin{align*}
        \left | \frac{1}{n} \sum_{i=1}^n (a_i -a) \right | &\leq  \frac{1}{n}\sum_{i=1}^N | a_i - a | + \frac{1}{n} \sum_{i=N+1}^n |a_i - a| \\
        &\leq \frac{1}{n}\sum_{i=1}^N | a_i - a | + \frac{1}{n} (n - N - 1)\epsilon & \text{by \eqref{eq:a_n-a}}\\
        &\leq 2\epsilon & \text{by \eqref{eq:M_N1} and \eqref{eq:M_N2}},
    \end{align*}
    which can be made arbitrarily small. This completes the proof of Lemma~\ref{lemma:avgOfConvergent}.\myqed
\end{proof}

\begin{lemma}
\label{lemma:productConvergentBounded}
    Let $(a_n)_{n=1}^\infty$ and $(b_n)_{n=1}^\infty$ be sequences of numbers such that 
    \begin{align}
        a_n &\rightarrow 0 ~ \text{ and}\label{eq:a_convergent} \\
        |b_n| &\leq C ~ \text{ for some $C \geq 0$ and for all $n \geq 1$.} \label{eq:b_bounded_by_C}
    \end{align}
    Then $a_n b_n \rightarrow 0$. Further, if $(A_n)$ is a sequence of random variables such that $A_n \to 0$ almost surely, then $A_nb_n \to 0$ almost surely. 
\end{lemma}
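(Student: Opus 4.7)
The plan is to prove this by a direct squeeze argument: since $(b_n)$ is bounded by $C$, we have $|a_n b_n| \leq C|a_n|$ for every $n$, and the right-hand side converges to $0$ by hypothesis~\eqref{eq:a_convergent}. Applying the squeeze theorem then yields $a_n b_n \to 0$. The only tiny wrinkle is the case $C = 0$, where $b_n \equiv 0$ and the conclusion is immediate; otherwise one can just use the standard $\epsilon$-$N$ definition: given $\epsilon > 0$, choose $N$ so that $|a_n| < \epsilon/C$ for all $n \geq N$, and conclude $|a_n b_n| < \epsilon$.

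For the almost-sure extension, the idea is to reduce to the deterministic statement pointwise. Define the event $\Omega_0 := \{\omega : A_n(\omega) \to 0\}$, which by hypothesis satisfies $P(\Omega_0) = 1$. For each fixed $\omega \in \Omega_0$, the sequence $(A_n(\omega))_{n=1}^\infty$ is a deterministic sequence converging to $0$, and $(b_n)$ is the same deterministic bounded sequence as before. Applying the first part of the lemma to the pair $(A_n(\omega), b_n)$ gives $A_n(\omega) b_n \to 0$. Since this holds for all $\omega \in \Omega_0$ and $P(\Omega_0) = 1$, we conclude $A_n b_n \to 0$ almost surely.

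There is no real obstacle here; the lemma is a textbook exercise included only to support the subsequent analysis of the width of $C_n^\VAEB$. The only thing to be careful about is to state cleanly that $(b_n)$ is deterministic (or at least that the bound $|b_n| \leq C$ holds for all $n$ and all $\omega$), so that the pointwise application of the first part in the almost-sure case goes through without any measurability subtleties.
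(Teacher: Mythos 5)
Your proof is correct and uses essentially the same squeeze argument as the paper, which simply notes that $|A_nb_n| \leq C|A_n| \to 0$ almost surely; your version is just a more careful spelling-out of the same one-line bound, including the $C=0$ edge case and the pointwise reduction for the almost-sure claim.
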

The proof is trivial, since $|A_nb_n| \leq C|A_n|$ which converges to zero almost surely.\qed

Now, we prove that a modified variance estimator is consistent.
\begin{lemma}
    \label{lemma:modifiedVarianceConsistent}
    Let $X_1, \dots, X_n \stackrel{i.i.d.}{\sim} Q \in \Qcal^\mu$ with $\Var(X_i) = \sigma^2$. Then the modified variance estimator
    \[ \widehat \sigma_n^2 := \frac{1}{n}\sum_{i=1}^n (X_i - \widehat\mu_{i-1})^2 \]
    converges to $\sigma^2$,  $Q$-almost surely.
\end{lemma}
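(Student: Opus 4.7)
The plan is to decompose the summand around the true mean $\mu$ and handle each resulting piece using the strong law of large numbers (SLLN) together with the elementary Cesaro-type result already proved as Lemma~\ref{lemma:avgOfConvergent}. Specifically, I would expand
\[
(X_i - \widehat{\mu}_{i-1})^2 = (X_i - \mu)^2 + 2(X_i - \mu)(\mu - \widehat{\mu}_{i-1}) + (\mu - \widehat{\mu}_{i-1})^2,
\]
and analyze the three resulting averages separately.

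First, the cleanest piece: by the SLLN applied to the i.i.d.\ sequence $((X_i - \mu)^2)_{i=1}^\infty$, we have $\tfrac{1}{n}\sum_{i=1}^n (X_i - \mu)^2 \to \sigma^2$ almost surely. Next, I would establish the auxiliary fact that $\widehat{\mu}_t \to \mu$ almost surely: writing $\widehat{\mu}_t = \tfrac{t}{t+1}\cdot\tfrac{1}{t}\sum_{i=1}^t X_i + \tfrac{1/2}{t+1}$, the first factor tends to $\mu$ by SLLN and the second to $0$, so $\widehat{\mu}_t \to \mu$ almost surely.

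For the third piece, since $\widehat{\mu}_{i-1} \to \mu$ almost surely, the sequence $(\mu - \widehat{\mu}_{i-1})^2$ tends to $0$ almost surely, and therefore its Cesaro average also tends to $0$ almost surely by Lemma~\ref{lemma:avgOfConvergent}. For the cross term, the point is to bound it by something that itself vanishes: since $X_i, \mu \in [0,1]$, we have $|X_i - \mu| \leq 1$, so
\[
\left| (X_i - \mu)(\mu - \widehat{\mu}_{i-1}) \right| \leq |\mu - \widehat{\mu}_{i-1}|,
\]
which tends to $0$ almost surely. Hence the individual cross-term summands converge to $0$ almost surely, and one more application of Lemma~\ref{lemma:avgOfConvergent} gives that their Cesaro average vanishes almost surely.

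Combining the three pieces, $\widehat{\sigma}_n^2 \to \sigma^2 + 0 + 0 = \sigma^2$ almost surely, which completes the proof. The only mild subtlety is that the summands are not i.i.d.\ because $\widehat{\mu}_{i-1}$ depends on the past, so one cannot simply apply SLLN directly to $(X_i - \widehat{\mu}_{i-1})^2$; but the decomposition above reduces everything either to SLLN for an i.i.d.\ sequence or to deterministic Cesaro arguments on pathwise-convergent sequences, which are captured by Lemma~\ref{lemma:avgOfConvergent}. No uniform-integrability or martingale machinery is needed, because all random variables in sight are uniformly bounded on $[0,1]$.
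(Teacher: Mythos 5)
Your proof is correct and takes essentially the same route as the paper: expand $(X_i - \widehat\mu_{i-1})^2$ around the true mean $\mu$, apply the SLLN to the $\frac{1}{n}\sum(X_i-\mu)^2$ piece, and kill the cross and square-remainder pieces via the Cesàro result of Lemma~\ref{lemma:avgOfConvergent} together with the uniform boundedness of the data. If anything your version is slightly cleaner --- you write the algebraically exact cross term $2(X_i-\mu)(\mu-\widehat\mu_{i-1})$ (the paper's displayed cross term $(X_i-\widehat\mu_{i-1})(\widehat\mu_{i-1}-\mu)$ is not what the binomial expansion actually yields, though the conclusion is unaffected since both are bounded and tend to zero) and you explicitly verify $\widehat\mu_t \to \mu$ almost surely rather than leaving it implicit.
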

\begin{proof}
By direct substitution, 
    \begin{align*}
        \widehat \sigma_n^2 &:= \frac{1}{n}\sum_{i=1}^n (X_i - \widehat\mu_{i-1})^2 ~= \frac{1}{n}\sum_{i=1}^n (X_i - \mu + \mu - \widehat \mu_{i-1} )^2 \\
        &= \underbrace{\frac{1}{n} \sum_{i=1}^n (X_i - \mu)^2}_{\xrightarrow{a.s.} \sigma^2} - \underbrace{\frac{2}{n}\sum_{i=1}^n (X_i - \widehat \mu_{i-1})(\widehat \mu_{i-1} - \mu)}_{(\star)} + \underbrace{\frac{1}{n} \sum_{i=1}^n(\mu - \widehat \mu_{i-1} )^2}_{(\star \star)}.
    \end{align*}
    Now, note that $\widehat \mu_{i-1} - \mu \xrightarrow{a.s.} 0$ and $|X_i - \widehat \mu_{i-1}| \leq 1$ for each $i$. Therefore, by Lemma~\ref{lemma:productConvergentBounded}, $(X_i - \widehat \mu_{i-1})(\widehat \mu_{i-1} - \mu) \xrightarrow{a.s.} 0$, and by Lemma~\ref{lemma:avgOfConvergent}, $(\star) \xrightarrow{a.s.} 0$. Furthermore, we have that $(\mu - \widehat \mu_{i-1})^2 \xrightarrow {a.s.} 0$ and so by another application of Lemma~\ref{lemma:avgOfConvergent}, we have $(\star \star) \xrightarrow{a.s.} 0$. This completes the proof of Lemma~\ref{lemma:modifiedVarianceConsistent}. \myqed
\end{proof}

\bigskip

Next, let us analyze the second term in the numerator in the margin of $C_n^{\VAEB}$,
\begin{equation}
\label{eq:VAEB_margin}
    \frac{\log(2/\alpha) + \sum_{i=1}^n v_i \psi_E(\lambda_i)}{\sum_{i=1}^n \lambda_i}.
\end{equation}
\begin{lemma}
\label{lemma:VAEB_margin_numerator}
    Under the same assumptions as Lemma~\ref{lemma:modifiedVarianceConsistent},
    \[\sum_{i=1}^n v_i \psi_E(\lambda_i) \xrightarrow{a.s.} \log(2/\alpha).\]
\end{lemma}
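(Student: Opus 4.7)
The plan is to Taylor-expand $\psi_E$ around zero, substitute the recommended bet, and reduce the claim to the empirical variance consistency established in Lemma~\ref{lemma:modifiedVarianceConsistent}. From the convergent power series $-\log(1-\lambda) - \lambda = \sum_{k \geq 2} \lambda^k / k$ one obtains $\psi_E(\lambda) = \lambda^2/8 + R(\lambda)$ with $|R(\lambda)| \leq \lambda^3/6$ whenever $|\lambda| \leq 1/2$. Substituting the prescribed bet $\lambda_i^2 = 2\log(2/\alpha)/(n\widehat\sigma_{i-1}^2)$ and $v_i = 4(X_i - \widehat\mu_{i-1})^2$ collapses the quadratic part of the sum into
\[
\sum_{i=1}^n \frac{v_i \lambda_i^2}{8} = \log(2/\alpha) \cdot \frac{1}{n}\sum_{i=1}^n \frac{(X_i - \widehat\mu_{i-1})^2}{\widehat\sigma_{i-1}^2}.
\]
The proof therefore reduces to two facts: the empirical average above converges almost surely to $1$, and the cubic remainder $\sum_i v_i R(\lambda_i)$ vanishes.

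For the average I would use the decomposition
\[
\frac{(X_i - \widehat\mu_{i-1})^2}{\widehat\sigma_{i-1}^2} = \frac{(X_i - \widehat\mu_{i-1})^2}{\sigma^2} + (X_i - \widehat\mu_{i-1})^2 \left( \frac{1}{\widehat\sigma_{i-1}^2} - \frac{1}{\sigma^2} \right).
\]
Averaging the first piece gives $\sigma^{-2}$ times the quantity from Lemma~\ref{lemma:modifiedVarianceConsistent}, which tends almost surely to $\sigma^2/\sigma^2 = 1$. In the second piece, $(X_i - \widehat\mu_{i-1})^2 \leq 1$ is bounded while the parenthesized factor tends almost surely to $0$ by Lemma~\ref{lemma:modifiedVarianceConsistent} together with continuity of $x \mapsto 1/x$ at $\sigma^2 > 0$; Lemma~\ref{lemma:productConvergentBounded} sends each summand to $0$, and then Lemma~\ref{lemma:avgOfConvergent} averages it away.

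For the cubic remainder I would split the sum at a random but almost-surely finite index $N$ chosen so that $\widehat\sigma_{i-1}^2 \geq \sigma^2/2$ for every $i > N$. On the tail $i > N$ one has $\lambda_i = O(n^{-1/2})$ uniformly, so $|R(\lambda_i)| = O(n^{-3/2})$, giving $\sum_{i > N} v_i |R(\lambda_i)| \leq 4n \cdot O(n^{-3/2}) = O(n^{-1/2}) \to 0$. The initial block $i \leq N$ has fixed (albeit random) size and contributes an $O(1/n^{3/2})$ quantity times a constant depending on $N$, hence also vanishes.

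The main obstacle is precisely these early indices: because the biasing constant $1/4$ in $\widehat\sigma_t^2$ only forces $\widehat\sigma_{i-1}^2 \geq 1/(4i)$, the untruncated $\lambda_i$ need not be small when $i$ is small and may even exceed $1/2$, breaking the Taylor bound. The almost-sure finiteness of $N$ is what rescues the argument, and I would implicitly assume $\sigma^2 > 0$ together with $n$ large enough (or that the truncation of Remark~\ref{remark:EBCI} is in force) so that $\lambda_i \leq 1/2$ on the tail where the Taylor bound is applied.
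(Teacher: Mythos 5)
Your proof is correct and uses the same underlying strategy as the paper's: approximate $\psi_E$ by $\psi_H(\lambda) = \lambda^2/8$, substitute the bet to collapse the quadratic part into $\log(2/\alpha)$ times the Cesaro average of $(X_i - \widehat\mu_{i-1})^2/\widehat\sigma_{i-1}^2$ (which tends to one via Lemma~\ref{lemma:modifiedVarianceConsistent} and Lemmas~\ref{lemma:avgOfConvergent}--\ref{lemma:productConvergentBounded}), and show the approximation error vanishes.

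The implementation of the error control differs in a way worth noting. The paper factors \emph{multiplicatively}: $\psi_E(\lambda_i) = \psi_H(\lambda_i)(1 + R_i)$ and $\sigma^2/\widehat\sigma_{i-1}^2 = 1 + R_i'$, then writes the whole sum as $\tfrac{\log(2/\alpha)}{\sigma^2}\cdot\tfrac1n \sum (X_i - \widehat\mu_{i-1})^2(1 + R_i'')$ with $R_i'' \to 0$, so the error is absorbed into a single Cesaro average. Because the factorization $\psi_E = \psi_H\,(1+R_i)$ is exact for every $\lambda_i \in (0,1)$, no restriction like $\lambda_i \leq 1/2$ is needed and there is no index split. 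You instead expand \emph{additively} with the quantitative bound $|R(\lambda)| \leq \lambda^3/6$, which only holds for $\lambda \leq 1/2$, and this is what forces your split at the a.s.-finite $N$. The trade-off is that your route surfaces a genuine subtlety the paper glosses over: $\lambda_i$ is really a triangular array $\lambda_{i,n}$, and the paper's claim that ``$\lambda_i \to 0$ a.s.'' needs to be read as $\sup_{i\leq n}\lambda_{i,n} \to 0$ a.s., which is exactly what your a.s.-finite $N$ (giving $\widehat\sigma_{i-1}^2 \geq \sigma^2/2$ on the tail) together with the prior $\widehat\sigma_{i-1}^2 \geq 1/(4i)$ on the head delivers. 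So your argument is somewhat less slick but more explicit about why the early indices are harmless, and it is a valid proof of the lemma.

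One small cosmetic point: since $\psi_E$ is defined only on $[0,1)$ and the recommended $\lambda_t$ in this section is the untruncated $\sqrt{2\log(2/\alpha)/(n\widehat\sigma_{t-1}^2)}$, the statement implicitly requires $n$ large enough (a.s.) that every $\lambda_i < 1$; your $N$-split already carries that along, and the $\lambda \leq 1/2$ threshold in the Taylor bound can be relaxed to any $c < 1$ by replacing $\lambda^3/6$ with $\lambda^3/\bigl(12(1-c)\bigr)$, so it would also cover the truncated variant $\lambda_t \land c$ of Remark~\ref{remark:EBCI}.
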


\begin{proof}
Recall that 
\(
    \frac{\psi_E(\lambda)}{\psi_H(\lambda)} \xrightarrow{\lambda \rightarrow 0} 1, \text{ and } 
    \widehat \sigma^2_t \xrightarrow{t \rightarrow \infty} \sigma^2.
\)
By definition of $\lambda_i$, we have that $\lambda_i \xrightarrow{a.s.} 0$ and thus we may also write
\begin{align}
    \frac{\psi_E(\lambda_i)}{\psi_H(\lambda_i)} &= 1 + R_i \label{eq:psi_E-psi_H-convergenceRate} ~~\text{and}~~\\
    \sqrt{\frac{\sigma^2}{\widehat \sigma_t^2}} &= 1 + R'_i \label{eq:varianceRatioConvergenceRate}
\end{align}
for some $R_i, R_i' \xrightarrow{a.s.} 0$. Thus, we rewrite the left hand side of the claim as
    \begin{align*}
        \sum_{i=1}^n v_i \psi_E(\lambda_i) &= \sum_{i=1}^n v_i \psi_H(\lambda_i) \frac{\psi_E(\lambda_i)}{\psi_H(\lambda_i)} 
        = \sum_{i=1}^n v_i (\lambda_i^2/8) (1 + R_i)   \\
        &= \sum_{i=1}^n v_i \cdot \frac{2 \log(2/\alpha)}{8 \widehat n\sigma_{i-1}^2} \cdot (1 + R_i) \\
        &= \sum_{i=1}^n v_i \cdot \frac{2 \log(2/\alpha)}{8 n\sigma^2} \cdot(1 + R_i') \cdot (1 + R_i) \\
        &= \sum_{i=1}^n 4 (X_i - \widehat \mu_{i-1})^2 \cdot \frac{2 \log(2/\alpha)}{8n \sigma^2} \cdot(1 + R_i + R_i' + R_iR_i').
\end{align*}
Defining $R''_i = R_i + R_i' + R_iR_i'$ for brevity, and noting that $R''_i \to 0$ almost surely, the above expression becomes
\begin{align*}
       \sum_{i=1}^n v_i \psi_E(\lambda_i)  &= \sum_{i=1}^n (X_i - \widehat \mu_{i-1})^2 \cdot \frac{\log(2/\alpha)}{n\sigma^2} \cdot(1 + R_i'') \\
        &= \frac{\log (2 / \alpha)}{\sigma^2} \left[ \frac{1}{n} \sum_{i=1}^n (X_i - \widehat \mu_{i-1})^2 \cdot (1 + R_i'') \right ] \\
        &= \frac{\log (2 / \alpha)}{\sigma^2} \left[ \underbrace{\frac{1}{n} \sum_{i=1}^n (X_i - \widehat \mu_{i-1})^2}_{\xrightarrow{a.s.} \sigma^2 \text{ by Lemma~\ref{lemma:modifiedVarianceConsistent}}} + \underbrace{\frac{1}{n} \sum_{i=1}^n (X_i - \widehat \mu_{i-1})^2 R_i''}_{\xrightarrow{a.s.} 0 \text{ by Lemma~\ref{lemma:productConvergentBounded}}} \right ]
        \xrightarrow{a.s.} \log (2 / \alpha),
\end{align*}
which completes the proof of Lemma~\ref{lemma:VAEB_margin_numerator}.\myqed
\end{proof}
Now, consider the denominator in \eqref{eq:VAEB_margin}.
\begin{lemma}
Continuing with the same notation,
\label{lemma:VAEB_margin_denominator}
    \[ \frac{1}{\sqrt{n}}\sum_{i=1}^n \lambda_i \xrightarrow{a.s.} \sqrt{\frac{2 \log(2/\alpha)}{\sigma^2}}. \]
\end{lemma}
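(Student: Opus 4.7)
}
The strategy is to rewrite the sum as a Cesàro average whose summands converge almost surely, then invoke Lemma~\ref{lemma:avgOfConvergent} pathwise. Specifically, since $\lambda_i = \sqrt{2 \log(2/\alpha)/(n\widehat\sigma_{i-1}^2)}$, pulling the $\sqrt{n}$ into each summand gives the clean identity
\[
\frac{1}{\sqrt{n}} \sum_{i=1}^n \lambda_i \;=\; \frac{1}{n} \sum_{i=1}^n \sqrt{\frac{2 \log(2/\alpha)}{\widehat\sigma_{i-1}^2}},
\]
which exposes the target limit as a Cesàro mean.

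The first step is to establish that $\widehat \sigma_t^2 \xrightarrow{a.s.} \sigma^2$, where $\widehat\sigma_t^2$ is the regularized estimator actually recommended in the paper. This is essentially Lemma~\ref{lemma:modifiedVarianceConsistent}: the only differences are the presence of $1/4$ in the numerator (a term that vanishes upon dividing by $t+1$), the shift from $t$ to $t+1$ in the denominator (which contributes a factor $t/(t+1) \to 1$), and the use of $\widehat\mu_i$ in place of $\widehat\mu_{i-1}$ in the centering. For the last point, the same decomposition $(X_i - \widehat\mu_i)^2 = (X_i - \mu)^2 - 2(X_i - \mu)(\widehat\mu_i - \mu) + (\widehat\mu_i - \mu)^2$ works, since $\widehat\mu_i \xrightarrow{a.s.} \mu$ by the strong law and all residuals are bounded; Lemmas~\ref{lemma:productConvergentBounded} and~\ref{lemma:avgOfConvergent} then dispose of the two cross terms exactly as in the proof of Lemma~\ref{lemma:modifiedVarianceConsistent}.

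Next, by the continuous mapping theorem (applied pathwise on the almost sure event $\{\widehat\sigma_t^2 \to \sigma^2\}$, noting $\sigma^2 > 0$ and that $\widehat\sigma_0^2 = 1/4 > 0$ so no division-by-zero arises), the summands satisfy
\[
\sqrt{\frac{2 \log(2/\alpha)}{\widehat\sigma_{i-1}^2}} \xrightarrow{a.s.} \sqrt{\frac{2 \log(2/\alpha)}{\sigma^2}}.
\]
Since Lemma~\ref{lemma:avgOfConvergent} asserts a purely deterministic fact about convergent real sequences, it applies on each sample path in the almost sure event just described, yielding
\[
\frac{1}{n} \sum_{i=1}^n \sqrt{\frac{2 \log(2/\alpha)}{\widehat\sigma_{i-1}^2}} \xrightarrow{a.s.} \sqrt{\frac{2 \log(2/\alpha)}{\sigma^2}},
\]
which is the desired conclusion.

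The proof is essentially routine; there is no serious obstacle. The only mildly delicate point is verifying that the specific regularized variance estimator used in the construction of $\lambda_t^{\VAEB}$ (with the $+1/4$ offset and the shift by one in both the count and the centering) still converges almost surely to $\sigma^2$; this reduces to a straightforward modification of Lemma~\ref{lemma:modifiedVarianceConsistent}, and everything else is an application of continuous mapping followed by a pathwise Cesàro argument.
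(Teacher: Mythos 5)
Your proposal is correct and follows the same route as the paper's proof: rewrite $\tfrac{1}{\sqrt n}\sum_{i=1}^n \lambda_i$ as $\sqrt{2\log(2/\alpha)/\sigma^2}$ times a Ces\`aro average, invoke almost-sure consistency of $\widehat\sigma_{i-1}^2$, and conclude via Lemma~\ref{lemma:avgOfConvergent} applied pathwise (the paper packages the same step using the bookkeeping variable $R_i'$ from~\eqref{eq:varianceRatioConvergenceRate}). You are somewhat more careful than the paper in flagging that the regularized estimator appearing in $\lambda_t^{\VAEB}$ (the $+\tfrac14$ offset, the $t+1$ denominator, centering at $\widehat\mu_i$ rather than $\widehat\mu_{i-1}$) is not literally the one whose consistency Lemma~\ref{lemma:modifiedVarianceConsistent} states, though as you note this is only a bookkeeping correction and not a different argument.
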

\begin{proof}
Let $R_i'$ be as in \eqref{eq:varianceRatioConvergenceRate}. Then,
    \begin{align*}
        \frac{1}{\sqrt{n}} \sum_{i=1}^n \lambda_i &= \frac{1}{\sqrt{n}} \sum_{i=1}^n\sqrt{\frac{2 \log (2/\alpha) }{n \widehat \sigma^2_{i-1}}} \\
        &= \frac{1}{\sqrt{n}} \sum_{i=1}^n\sqrt{\frac{2 \log (2/\alpha) }{ n \sigma^2}} \cdot \left( 1 + R_i' \right ) \\
        &= \sqrt{\frac{2 \log (2/\alpha) }{ \sigma^2 }} \cdot \underbrace{\frac{1}{n} \sum_{i=1}^n \left (1 + R_i' \right )}_{ \xrightarrow{a.s.} 1 \text{ by Lemma~\ref{lemma:avgOfConvergent}} } 
         \xrightarrow{a.s.} \sqrt{\frac{2 \log (2/\alpha) }{ \sigma^2 }},
    \end{align*}
    completing the proof of Lemma~\ref{lemma:VAEB_margin_denominator}.\myqed
\end{proof}

We are now able to combine Lemmas~\ref{lemma:VAEB_margin_numerator} and \ref{lemma:VAEB_margin_denominator} to prove the main result.

\begin{proposition}\label{proposition:vaeb-width}
Denoting the half-width of $C_n^{\VAEB}$ as $W_n$, and assuming the data are drawn iid from a distribution $Q \in \Qcal^\mu$ with variance $\sigma^2$, we have
\begin{equation}
    \sqrt{n} W_n ~\equiv~ \sqrt{n} \left ( \frac{\log(2/\alpha) + \sum_{i=1}^n v_i \psi_E(\lambda_i)}{\sum_{i=1}^n \lambda_i}  \right ) \xrightarrow{a.s.} \sigma \sqrt{2 \log (2/\alpha)}.
\end{equation}
Thus, the width is asymptotically proportional to the standard deviation.
\end{proposition}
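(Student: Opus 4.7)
The plan is to combine Lemmas~\ref{lemma:VAEB_margin_numerator} and \ref{lemma:VAEB_margin_denominator}, which between them have essentially done all of the work. The only algebraic trick is to pull a factor of $1/\sqrt{n}$ from the prefactor into the denominator so that the numerator and denominator each have a known almost-sure limit; the conclusion then follows from the continuous mapping theorem applied to the ratio map $(x,y)\mapsto x/y$.

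Concretely, I would rewrite
\[ \sqrt{n}\, W_n \;=\; \frac{\log(2/\alpha) + \sum_{i=1}^n v_i\, \psi_E(\lambda_i)}{\frac{1}{\sqrt{n}} \sum_{i=1}^n \lambda_i}. \]
By Lemma~\ref{lemma:VAEB_margin_numerator}, $\sum_{i=1}^n v_i \psi_E(\lambda_i) \xrightarrow{a.s.} \log(2/\alpha)$, so the numerator converges almost surely to $\log(2/\alpha) + \log(2/\alpha) = 2\log(2/\alpha)$. By Lemma~\ref{lemma:VAEB_margin_denominator}, the denominator converges almost surely to $\sqrt{2\log(2/\alpha)/\sigma^2}$. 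Taking the ratio gives
\[ \sqrt{n}\, W_n \;\xrightarrow{a.s.}\; \frac{2\log(2/\alpha)}{\sqrt{2\log(2/\alpha)/\sigma^2}} \;=\; \sigma\sqrt{2\log(2/\alpha)}, \]
as desired.

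The main ``obstacle'' is really only cosmetic: the division of almost-sure limits requires the denominator's limit to be nonzero. This holds automatically whenever the underlying distribution is nondegenerate, i.e.\ $\sigma > 0$; in the degenerate case $\sigma = 0$, the data equal $\mu$ with probability one and the proposition is trivial. No estimates beyond those already established in the preceding lemmas are required, and the almost-sure mode of convergence is preserved throughout since all intermediate steps are deterministic continuous operations on almost-surely convergent sequences.
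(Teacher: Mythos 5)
Your proof is correct and follows exactly the same route as the paper's: rewrite $\sqrt{n}\,W_n$ with the $\sqrt{n}$ moved into the denominator, then apply Lemmas~\ref{lemma:VAEB_margin_numerator} and~\ref{lemma:VAEB_margin_denominator} and take the ratio of the almost-sure limits. Your explicit remark that the denominator's limit is nonzero when $\sigma>0$ is a sensible clarification that the paper leaves implicit.
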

\begin{proof}
By direct rearrangement of the left hand side, we see that
\begin{align*}
    \sqrt{n} \left ( \frac{\log(2/\alpha) + \sum_{i=1}^n v_i \psi_E(\lambda_i)}{\sum_{i=1}^n \lambda_i}  \right ) &=  \frac{\log(2/\alpha) + \sum_{i=1}^n v_i \psi_E(\lambda_i)}{\frac{1}{\sqrt{n}}\sum_{i=1}^n \lambda_i} \\
    &\xrightarrow{a.s.} \frac{\log(2/\alpha)+ \log(2/\alpha)}{\sigma^{-1} \sqrt{2\log (2/\alpha)}} = \sigma \sqrt{2\log(2/\alpha)},
\end{align*}
which completes the proof of Proposition~\ref{proposition:vaeb-width}.\myqed
\end{proof}

\subsection{aGRAPA sublevel sets need not be intervals: a worst-case example}
\label{section:aSOSsublevelSets}
In the proof of Theorem~\ref{theorem:hedgedCS}, we demonstrated that the hedged capital process with predictable plug-in bets yielded convex confidence sets, making their construction more practical. However, this proof was made simple by taking advantage of the fact that the sequences before truncation $(\dot \lambda_t^+)_{t=1}^\infty$ and $(\dot \lambda_t^-)_{t=1}^\infty$ did not depend on $m \in [0, 1]$. This raises the natural question, of whether there are betting-based confidence sets which are nonconvex when these sequences depend on $m$. Here, we provide a (somewhat pathological) example of the aGRAPA process with nonconvex sublevel sets.

Consider the aGRAPA bets,
\begin{equation} \small \lambda_t^\aSOS := \frac{\widehat \mu_{t-1} - m}{\widehat \sigma_{t-1}^2 + (\widehat \mu_{t-1} - m)^2} ~\text{where}~ \widehat \mu_t := \frac{1/2 + \sum_{i=1}^t X_i}{t+1},~\widehat \sigma_t^2 := \frac{1/20 + \sum_{i=1}^t (X_i - \widehat \mu_i)^2}{t+1}. \end{equation}
Furthermore, suppose that the observed variables are $X_1 = X_2 = 0$. Then it can be verified that 
\begin{align*}
\Kcal_2^\aSOS(m) &= \left (1 + \lambda_1^\aSOS(X_1 - m) \right ) \left ( 1 + \lambda_2^\aSOS(X_2 - m) \right ) \\
&= \left (1 + \frac{1/2 - m}{1/20 + (1/2-m)^2 }(-m) \right ) \left ( 1 + \frac{1/4 - m}{0.05625
 + (1/4 - m)^2} (-m) \right ),
\end{align*}
which does not yield convex sublevel sets. For example, $\Kcal_2^\aSOS(0.08) < 0.85$ and $\Kcal_2^\aSOS(0.4) < 0.85$ but $\Kcal_2^\aSOS(0.03) > 0.85$. In particular, the sublevel set, 
\[  \left \{m \in [0, 1] : \Kcal_2^\aSOS(m) < 0.85 \right \} \]
is not convex. In our experience, however, situations like the above do not arise frequently. In fact, we needed to actively search for these examples and use a rather small ``prior'' variance of $1/20$ which we would not use in practice. Furthermore, the sublevel set given above is at the $0.85$ level while confidence sets are compared against $1/\alpha$ which is always larger than 1 and typically larger than 10. We believe that it may be possible to restrict $(\lambda_t^\aSOS)_{t=1}^\infty$ and/or the confidence level, $\alpha \in (0, 1)$ in some way so that the resulting confidence sets are convex. One reason to suspect that this may be possible is because of the intimate relationship between $\lambda^\aSOS_t$, $\lambda_t^\SOS$, and the optimal hindsight bets, $\lambda^\HS$. Specifically, we show in Section~\ref{section:EL} that the optimal hindsight capital $\Kcal^\HS_t$ is exactly the empirical likelihood ratio \citep{owen2001empirical} which is known to generate convex confidence sets for the mean \citep{hall1990methodology}. We leave this question as a direction for future work. 

\subsection{Betting confidence sequences for non-iid data}
\label{section:noniid}
The CSs presented in this paper are valid under the assumption that each observation is bounded in $[0, 1]$ with conditional mean $\mu$. That is, we require that $X_1, X_2, \dots$ are $[0, 1]$-valued with 
\(\EE(X_t \mid \Fcal_{t-1} ) = \mu \text{ for each $t$}, \)
which includes familiar regimes such as independent and identically-distributed (iid) data from some common distribution $P$ with mean $\mu$. Despite the generality of our results, we made matters simpler by focusing the simulations in Section~\ref{section:simulations} on the iid setting. For the sake of completeness, we present a simulation to examine the behavior of our CSs in the presence of some non-iid data.
\begin{figure}[!htbp]
 \centering
 \textbf{250 observations from Beta(10, 10), followed by all Bernoulli(1/2)}
    \includegraphics[width=0.75\textwidth]{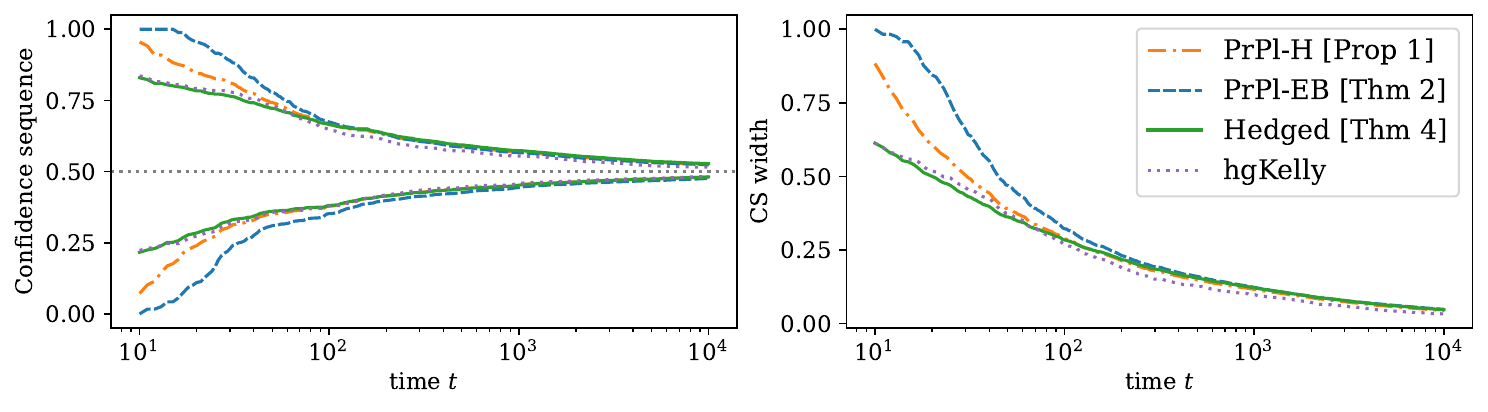}
 \hfill\\
 \textbf{2500 observations from Beta(10, 10), followed by all Bernoulli(1/2)}
    \centering
    \includegraphics[width=0.75\textwidth]{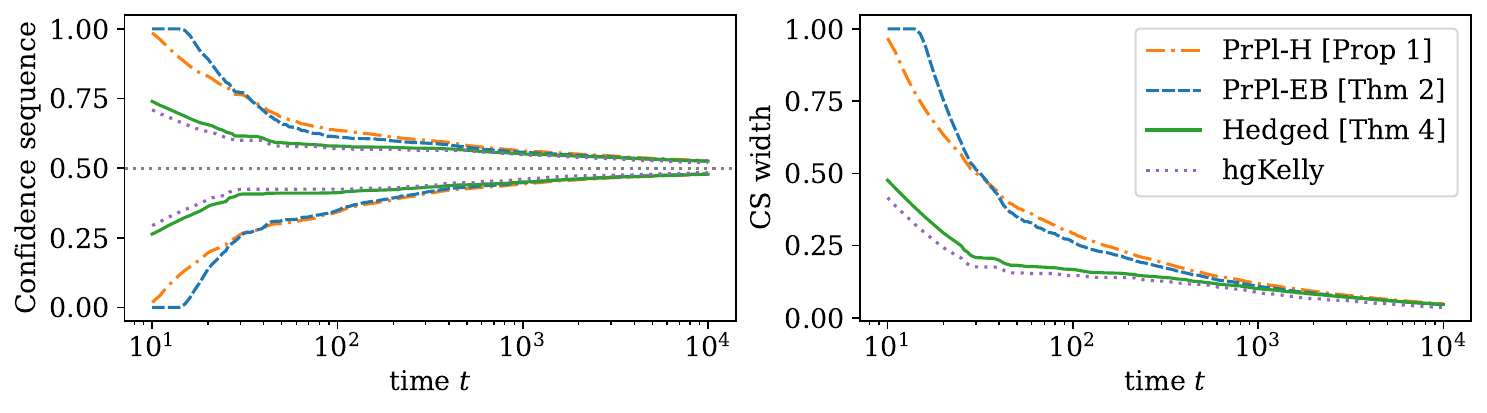}
 \caption{CSs for the true mean $\mu=1/2$ for non-iid data. In top pair of plots, the first 250 observations were independently drawn from a Beta(10, 10) while the subsequent observations are drawn from a Bernoulli(1/2). The bottom pair of plots is similar, but with 2500 initial draws from a Beta(10, 10) instead of 250. In both cases, the betting-based CSs (Hedged and ConBo) tend to outperform those based on supermartingales.}
 \label{fig:non-iid}
\end{figure}

In this setup, we draw the first several hundred or thousand observations independently from a Beta(10, 10) --- a distribution whose mean is $1/2$ but whose variance is small ($\approx 0.012$) --- while the remaining observations are independently drawn from a Bernoulli($1/2$) whose mean is also $1/2$ but with a maximal variance of $1/4$. We chose to start the data off with low-variance observations in an attempt to ``trick'' our betting strategies into adapting to the wrong variance. Empirically, we find that the hedged capital (Theorem~\ref{theorem:hedgedCS}) and ConBo (Corollary~\ref{corollary:ConBo}) CSs start off strong, adapting to the small variance of a Beta(10, 10). After several Bernoulli(1/2) observations, the CSs remain tight, but seem to shrink less rapidly. Nevertheless, we find that the hedged capital and ConBo CSs greatly outperform the Hoeffding (Proposition~\ref{proposition:predmixHoeffding}) and empirical Bernstein (Theorem~\ref{theorem:EBCS}) predictable plug-in CSs (see Figure~\ref{fig:non-iid}). Regardless of empirical performance, all methods considered produce \textit{valid} CSs for $\mu$. 

\subsection{Owen's empirical likelihood ratio and Mykland's dual likelihood ratio}
\label{section:EL}
Let $x_1, \dots, x_t \in [0, 1]$ and recall the optimal hindsight capital process $\Kcal^\HS_t(m)$,
\[ \Kcal^\text{HS}_t(m) := \prod_{i=1}^t (1 + \lambda^\text{HS}(x_i - m)) ~~~\text{where $\lambda^\text{HS}$ solves}~~~\sum_{i=1}^t \frac{x_i - m}{1+\lambda^\text{HS}(x_i-m)} = 0. \]
Now, let $\Qcal^m \equiv \Qcal^m(x_1^t)$ be the collection of discrete probability measures with support $\{x_1, \dots, x_t\}$ and mean $m$. Let $\Qcal \equiv \Qcal(x_1^t) := \bigcup_{m \in [0, 1]}\Qcal^m$ and define the empirical likelihood ratio \citep{owen2001empirical},
\[ \EL_t(m) := \frac{\sup_{Q \in \Qcal} \prod_{i=1}^t Q(x_i)}{\sup_{Q \in \Qcal^m} \prod_{i=1}^t Q(x_i) }. \]
\citet{owen2001empirical} showed that the numerator equals $(1/t)^t$ and the denominator equals
\[ \prod_{i=1}^t (1 + \lambda^\EL(x_i - m))^{-1} ~~~\text{where $\lambda^\EL$ solves}~~~ \sum_{i=1}^t \frac{x_i - m}{1+\lambda^\EL(x_i-m)} = 0. \]
Notice that the above product is exactly the reciprocal of $\Kcal_t^\text{HS}$ and that $\lambda^\EL = \lambda^\text{HS}$. Therefore for each $m \in [0, 1]$,
\[ \EL_t(m) = (1/t)^t \Kcal_t^\text{HS}(m). \]
Furthermore, given the connection between the empirical and dual likelihood ratios for independent data \citep{mykland1995dual}, the hindsight capital process is also proportional to the dual likelihood ratio in this case.

\newpage
\section{An extended history of betting and its applications} \label{section:history}

(This is an expanded version of Section~\ref{section:history-short} and Figure~\ref{fig:history}.)

The use of betting-related ideas in probability, statistics, optimization, finance and machine learning has evolved in many different parallel threads, emanating from different influential early works and thus having different roots and evolutions. Since these threads have had little interaction for many decades now, we consider it worthwhile to mention them in some detail. Two notes of caution:
\begin{itemize}
    \item We anticipate missing some authors and works in our broad strokes below, but a thorough coverage would be better suited to a longer survey paper on the topic. For example, we entirely skip the field of mathematical finance, since betting is literally a foundation of the entire field (and theoretical and applied progress on martingales, betting strategies, and related topics has been phenomenal).
    \item Many of the authors listed below have used the language of betting in their works explicitly, but others have not --- and may even prefer (or have preferred) \emph{not} to do so. Thus, our references should be treated with a pinch of salt, as some connections that we draw to betting may be more apparent in hindsight (to us) than foresight (to the authors).
\end{itemize}
If we had to pick the most critical early authors without whom our work would have been impossible, it would be Ville, Wald, Kelly and Robbins; later influences on us have been via Lai, Cover, Shafer, Vovk, Grunwald and the second author's own earlier works~\citep{howard_exponential_2018,howard_uniform_2019}. These authors stand out below.

\paragraph{Probability.} Ville's 1939 PhD thesis~\citep{ville_etude_1939} contained an important and rather remarkable result of its time that connected measure-theoretic probability with betting, and indeed brought the very notion of a martingale into probability theory. In brief, Ville proved that for every event of measure zero, there exists a betting strategy for which a gambler's wealth process (a nonnegative martingale) grows to infinity if that event occurs. For example, the strong law of large numbers (SLLN) and the law of the iterated logarithm (LIL) are two classic measure-theoretic statements that occur on all sequences of observations, except for a null set according to some underlying probability measure (where the two null sets for the two laws are different). Ville proved that it is possible to bet on the next outcome such that if the LIL were false for that particular sequence of observations, then the gambler's wealth would grow in an unbounded fashion.

Doob's monumental papers and book~\cite{doob1953stochastic} in the following decades stripped martingales of their betting roots and presented them as some of the most powerful tools of measure-theoretic probability theory, with applications to many other branches of mathematics. (However, betting could be viewed as instances of ``Doob's martingale transform''.) 
These betting roots were revived in the 1960s with the renewed interest in algorithmic definitions of randomness, due to Kolmogorov, \citet{martin1966definition} and many others. 

More recently, \citet{shafer_probability_2005, shafer2019game} have produced two seminal books that aim bring betting and martingales to the front and center of probability and finance, aiming to derive much (if not all) of probability theory from purely game-theoretic principles based on betting strategies. The product martingale wealth process that appears in our work also appears in theirs (indeed, it is a fundamental process), but Shafer and Vovk did not explore the topics in our paper (confidence sequences, explicit computationally efficient betting strategies, sampling without replacement, thorough numerical simulations, and so on). Indeed, their book has a thorough treatment of probability and finance, but with respect to statistical inference, there is little explicit methodology for practice. Perhaps they were aware of such a statistical utility, but they did not explicitly recognize or demonstrate the excellent power of betting in practice (when properly developed)  for problems such as ours.

\paragraph{Statistical inference.} Using the power of hindsight, we now know that Wald's influential work on the sequential probability ratio test was implicitly based on martingale techniques \cite{wald_sequential_1945}. Wald derived many fundamental results that he required from scratch without having the general language that was being set up by Doob in parallel to his work. In the case of testing a simple null $H_0: \theta = \theta^*$ against a composite alternative $H_0: \theta \neq \theta^*$, \citet[Eq (10:10)]{wald_sequential_1945} suggests forming the likelihood ratio process $\prod_{i=1}^n f_{\theta_{i-1}}(X_i) / \prod_{i=1}^n f_{\theta^*}(X_i)$, where $\theta_{i-1}$ is a mapping from $X_1,\dots,X_{i-1}$ to $\Theta$; in other words, $\theta_{i-1}$ is predictable. In the language of our paper, this is a predictable plug-in, and the first appearance of betting-like ideas in the statistical literature. However, beyond this passing equation in a parametric setup, the idea appears to have lain dormant.

Robbins (along with students and colleagues Siegmund, Darling, and Lai) quickly realized the power of Wald's and Ville's ideas as well as martingales more generally, and pursued a rather broad agenda around sequential testing and estimation, including the introduction and extensive study of confidence sequences and the method of mixtures \citep{darling_iterated_1967,darling_confidence_1967, darling_inequalities_1967, robbins_iterated_1968, robbins_probability_1969, robbins_boundary_1970, robbins_class_1972, robbins_expected_1974, lai_confidence_1976}. Robbins and Siegmund also analyzed Wald's ``betting'' test, and proved in some generality that its behavior is similar to a mixture likelihood ratio test \citep[Section 6]{robbins_expected_1974}. Most of Wald's and Robbins' work was parametric, but Robbins did explicitly study the sub-Gaussian setting in some detail \citep{robbins_statistical_1970}. Building on a vast literature of Chernoff-style concentration inequalities that exploded after Robbins' time, \citet{howard_exponential_2018, howard_uniform_2019} recently extended mixture methods of Robbins to derive confidence sequences under a large class of nonparametric settings using exponential supermartingales. \citet{howard_exponential_2018, howard_uniform_2019} recognized Wald's betting idea, but did not develop it nonparametrically beyond a brief mention in the paper as a direction for future work. The current work takes this natural next step in some thorough detail.

\paragraph{Information and coding theory.} Soon after the seminal work of \citet{shannon1948mathematical}, another researcher at AT\&T Bell Labs, John Larry Kelly Jr. wrote a paper titled ``A New Interpretation of Information Rate'' which explicitly connected betting with the new field of information theory, complementing the work of Shannon \citep{kelly1956new}. In short, he proved that it is possible to bet on the symbols in a communication channel at odds consistent with their probabilities in order to have a gambler's wealth grow exponentially, with the exponent equaling the rate of transmission over the channel. More explicitly, given a sequence of Bernoulli random variables with probability $p > 1/2$, Kelly proved that betting a $(2p-1)$ fraction of your current wealth on the next outcome being 1 is the unique strategy that maximizes the expected log wealth of the gambler. 

When the probability $p$ changes at each step in an unknown manner, the ``universal coding'' work of \citet{krichevsky1981performance} showed that a mixture method involving the Jeffreys prior and maximum likelihood can achieve nearly the optimal wealth in hindsight, with the expected log wealth of their strategy only being worse than the optimal oracle log-wealth by a factor that is logarithmic in the number of rounds; these observations work for any discrete alphabet, not just a binary. Cover's interest in these techniques spans several decades~\citep{cover1974universal,cover1984algorithm,cover1987log,bell1980competitive,bell1988game}, culminating  in his famous universal portfolio algorithm \citep{cover1991universal}, that today forms a standard textbook topic in information theory.

There are other parts of information/coding theory that could be seen as related in some ways to betting through the use of (what are now called) e-variables: these include the topics of prequential model selection and minimum description length; see works by \citet{rissanen1984universal,rissanen1998stochastic}, \citet{dawid1984present,dawid1997prequential}, \citet{grunwald2007minimum,grunwald_safe_2019}, \citet{li1999estimation} and references therein.

\paragraph{Online learning and sequential prediction under log loss.} In the 1990s, the problems studied by Krichevsky, Trofimov, and Cover continued to be extended --- often dropping the information theoretic context --- under the title of sequential prediction under the logarithmic loss. In the active subfield of online learning, the previous results were effectively ``regret bounds'' against potentially adversarial sequences of observations, with a chapter devoted to the problem in the book on prediction, learning and games by \citet{cesa2006prediction}. More recently, Orabona and colleagues such as Pal and Jun have found powerful implications of these ideas in deriving parameter-free algorithms for online convex optimization \citep{orabona2016coin, orabona2017training, jun2017improved, jun2019parameter}. 

\citet{rakhlin2017equivalence} found that deterministic regret inequalities can be used to derive concentration inequalities for martingales, connecting the two rich fields. Later, \citet{jun2019parameter} also derive concentration inequalities using their betting-based regret bounds, with explicit bounds derived in the sub-Gaussian and bounded settings. However, because regret bounds could be tight in rate but are typically loose in constants, the resulting concentration inequalities are not tight in practice. Thus, we view this line of work as important and complementary to our explorations, which are different in their motivation, derivation and practicality.\\

\emph{Typically, none of these lines of literature have cited the others.} For example, the important paper of \citet{rakhlin2017equivalence} does not mention the work of Ville, Wald or Robbins, or even of Vovk and Shafer. Similarly, despite the books of Shafer and Vovk having a wonderful coverage of the history of probability and martingales stemming back hundreds of years, even their recent 2019 book~\citep{shafer2019game} does not cite the coding theory and online learning literature very much, including the works of Orabona and coauthors \citep{orabona2016coin, orabona2017training, jun2017improved, cutkosky2018black, jun2019parameter}, \citet{krichevsky1981performance}, or \citet{rakhlin2017equivalence}. Recent work of Orabona and colleagues also in turn has no mention of the books of \citet{shafer_probability_2005, shafer2019game}, or works of Ville, Wald, Robbins, Howard, their coauthors and other recent authors. The work of \citet{howard_exponential_2018, howard_uniform_2019} does cite the Wald and Robbins literatures, as well as the books of Shafer and Vovk and pioneering work of Ville, but does not form connections to information/coding theory nor to online learning. The excellent book of \citet{cesa2006prediction} does not cite Ville, the seminal martingale works of Robbins, or the 2001 book by Shafer and Vovk. \footnote{Authors like like~\citet{rissanen1984universal,rissanen1998stochastic} and ~\citet{dawid1984present,dawid1997prequential} are not cited in most of these works, perhaps because the connections of their works to betting are indirect.} 

The reason for the lack of intersection of these parallel threads is likely manifold, and definitely far from malicious:
(a) these works were and continue to be published in different literatures, 
(b) these works had different goals in mind, meaning that they were addressing different problems and often using different techniques, 
(c) our understanding of these literatures and their relationships is constantly evolving and  far from complete; it is likely that no author has a command over all these parallel literatures, and indeed this should not be expected.

In the preface of their 2006 book, Cesa-Bianchi and Lugosi write
\begin{quote}
Prediction of individual sequences, the main theme of this book, has been studied in various fields, such as statistical decision theory, information theory, game theory, machine learning, and mathematical finance. Early appearances of the problem go back as far as the 1950s, with the pioneering work of Blackwell, Hannan, and others. Even though the focus of investigation varied across these fields, some of the main principles have been discovered independently. Evolution of ideas remained parallel for quite some time. As each community developed its own vocabulary, communication became difficult. By the mid-1990s, however, it became clear that researchers of the different fields had a lot to teach each other. When we decided to write this book, in 2001, one of our main purposes was to investigate these connections and help ideas circulate more fluently. In retrospect, we now realize that the interplay among these many fields is far richer than we suspected. ... Today, several hundreds of pages later, we still feel there remains a lot to discover. This book just shows the first steps of some largely unexplored paths. We invite the reader to join us in finding out where these paths lead and where they connect.
\end{quote}
Thus it is clear that Cesa-Bianchi and Lugosi already foresaw that there were many connections between the fields that have been unstated, underappreciated, undiscovered and underutilized. The connections we briefly point out above between these literatures, both historical and modern, are themselves new in their own right (not existing in any of the aforementioned books or papers) and may be considered a small contribution of this paper. A  more thorough investigation of these connections may be the topic of a future survey paper, or indeed, a book on these topics.

\end{document}